\newcommand{\rem}[1]{\textbf{\textcolor{blue}{(#1)}}}
\newlength{\dhatheight}
\theoremstyle{definition}
\newtheorem{theorem}{Theorem}[section]
\newtheorem{proposition}[theorem]{Proposition}
\newtheorem{lemma}[theorem]{Lemma}
\newtheorem{definition}[theorem]{Definition}
\newtheorem{conjecture}[theorem]{Conjecture}
\newtheorem{example}[theorem]{Example}
\theoremstyle{remark}
\newtheorem{remark}[theorem]{Remark}
\newcommand{\sA}{\mathsf{A}}
\newcommand{\sY}{\mathsf{Y}}
\newcommand{\sS}{\mathsf{S}}
\newcommand{\sT}{\mathsf{T}}
\newcommand{\si}{\mathsf{i}}
\newcommand{\scA}{\mathscr{A}}
\newcommand{\scS}{\mathscr{S}}
\newcommand{\scY}{\mathscr{Y}}
\newcommand{\fq}{\mathfrak{q}}
\numberwithin{equation}{section}
\title{Higgsing $qq$-character and irreducibility}
\author[Taro Kimura]{Taro Kimura}%\\ \hspace*{-1.4em}木村~太郎}
\address{Institut de Math\'ematiques de Bourgogne, Universit\'e Bourgogne Europe, CNRS, UMR 5584, France}
\dedicatory{Dedicated to Professor Hiraku Nakajima on his 60th anniversary}
\begin{document}

\begin{abstract}
%    We show that the $qq$-character of the irreducible highest-weight module for finite-type and affine quivers is obtained by Higgsing, specialization of the equivariant parameters of the associated framing space in the quiver variety.
    We study the $qq$-character of quantum affine and toroidal algebra modules, with a focus on the role of spectral parameters. In particular, we revisit how their specialization affects the irreducibility of these modules.
\end{abstract}

\maketitle

\setcounter{tocdepth}{1}
\tableofcontents

\section{Introduction and summary}

The $qq$-character is a two-parameter deformation of the ordinary character associated with a module constructed on a quiver, which has been introduced in the context of supersymmetric gauge theory through double quantization of Seiberg--Witten geometry by Nekrasov~\cite{Nekrasov:2015wsu}.
The $qq$-character has a geometric realization using Nakajima's quiver variety~\cite{Nakajima:1994nid,Nakajima:1998DM,Nakajima:1999JAMS}, which could be interpreted as a natural generalization of that for the $q$-character of Yangian and quantum affine algebra~\cite{Knight:1995JA,Frenkel:1998ojj}:%
\footnote{%
In fact, the notion of the $qq$-character was already mentioned in \cite{Frenkel:1997lee} in the context of the deformed W-algebras.
Moreover, there exists another two parameter deformation of the character, called the $t$-analog of $q$-character~\cite{Nakajima:2001PC,Nakajima:2004AM}, which also has a natural geometric construction based on the (graded) quiver variety.
The role of the $t$-parameter is different from the second deformation parameter of the $qq$-character, so that the relation between the $qq$-character and the $t$-analog of $q$-character is not yet clear at this moment.
}
Let $\Gamma = (\Gamma_0,\Gamma_1)$ be a quiver, where $\Gamma_0$ is a set of nodes, and $\Gamma_1$ is a set of edges.
We denote by $\mathfrak{g}_\Gamma$ the simple Lie algebra whose Dynkin diagram coincides with a quiver $\Gamma$.
We denote the quiver variety associated with a quiver $\Gamma$ by $\mathfrak{M}_{w,v}^\Gamma$ with the dimension vectors, $v = (v_i)_{i \in \Gamma_0}$ and $w = (w_i)_{i \in \Gamma_0}$, such that the vector spaces attached to the nodes and the corresponding framing spaces are given by $\mathbf{V}_i = \mathbb{C}^{v_i}$ and $\mathbf{W}_i = \mathbb{C}^{w_i}$ for $i\in \Gamma_0$.
We also write $\mathbf{V} = (\mathbf{V}_i)_{i \in \Gamma_0}$, $\mathbf{W} = (\mathbf{W}_i)_{i \in \Gamma_0}$ and denote the automorphism groups by $G_v = \mathrm{GL}(\mathbf{V}) = \prod_{i \in \Gamma_0} \mathrm{GL}(\mathbf{V}_i)$, $G_w = \mathrm{GL}(\mathbf{W}) = \prod_{i \in \Gamma_0} \mathrm{GL}(\mathbf{W}_i)$.
Then, in addition to the quantum deformation parameters $q_1, q_2 \in \mathbb{C}^\times$, the $q$- and $qq$-characters depend on the equivariant parameters of $G_w$ denoted by $x = (x_{i,\alpha})_{i \in \Gamma_0, \alpha = 1,\ldots,w_i}$.
In fact, these parameters are identified with the zeros of the Drinfeld polynomials, called the spectral parameters, %
%\footnote{%
%Namely, the Drinfeld polynomial is a characteristic polynomial of $G_w$.
%}
playing a central role in the construction of the finite-dimensional module of the Yangian and the quantum affine algebra~\cite{Chari:1991CMP,Chari:1994pf}.
It is known that the finite-dimensional module associated with generic spectral parameters is irreducible.
However, when the spectral parameters are in a special relation, % obey the so-called \emph{$q$-segment condition}, 
the corresponding module may be reducible.
The purpose of this paper is to elucidate the role of the spectral parameters from the point of view of the $qq$-character.

\subsection{Finite-dimensional module of quantum affine algebra and $qq$-character}

A finite-dimensional module of the quantum affine algebra $U_q(\widehat{\mathfrak{g}}_\Gamma)$ is in general an $\ell$-highest-weight module, which allows the $\ell$-weight space decomposition.%
\footnote{%
In the following, we call them a highest-weight module and the weight space decomposition for simplicity.}
We denote the highest-weight module parametrized by the dimension vector $w$ (identified with the Dynkin label) and the spectral parameter $x$ by $\mathbf{M}_{w,x} = \bigoplus_{\lambda} \mathbf{M}_\lambda$ with the $\ell$-weight $\lambda$.
Then, the $q$-character of the $U_q(\widehat{\mathfrak{g}}_\Gamma)$-module $\mathbf{M}_{w,x}$ is given by
\begin{align}
 \mathsf{T}^{(\mathsf{q})}_{w,x} = \mathsf{T}^{(\mathsf{q})}[\mathbf{M}_{w,x}] = \sum_\lambda \dim \mathbf{M}_\lambda \mathsf{Y}_\lambda^{(\mathsf{q})} \, ,
\end{align}
where $\mathsf{Y}_\lambda^{(\mathsf{q})}$ is a monomial of the $\Gamma_0$-graded formal commutative variable $(\sY_{i,x})_{i \in \Gamma_0}$, that we call the $\sY$-variable, assigned for each $\ell$-weight $\lambda$.
The monomial for the highest-weight (the highest-weight monomial) is given by $\sY_{w,x} = \prod_{i\in\Gamma_0} \prod_{\alpha=1}^{w_i} \sY_{i,x_{i,\alpha}}$.
%The lower terms are given by multiplication of the simple root monomial $\mathsf{A}_{i,x}^{-1}$.
The $q$-character takes a value in the Laurent polynomial ring of the $\sY$-variables, $\mathsf{T}^{(\mathsf{q})} \in \mathbb{Z}[\sY_{i,x}^{\pm 1}]_{i\in\Gamma_0}$.
%We remark that the same construction is also applicable to the highest-weight modules of the quantum toroidal algebras although they will be infinite dimensional~\cite{Feigin:2016pld,Feigin:2016uxd}.

An important result by Frenkel--Reshetikhin and Frenkel--Mukhin on the $q$-character is the following.
\begin{theorem}[\cite{Frenkel:1999ky}; Conjecture in \cite{Frenkel:1998ojj}]\label{thm:q-ch_ker}
 The image of the $q$-character equals the intersection of the kernels of the screening operators.
\end{theorem}
%that generate an analog of the Weyl reflection
The screening operator mentioned above is seen as a reduced version of the original one by Frenkel and Reshetikhin~\cite{Frenkel:1997lee}, which was introduced to establish the deformed W-algebra as the maximal subalgebra of the Heisenberg algebra that commutes with these operators.
In fact, this original form of the screening operator allows us to consider the vertex operator formalism of the $qq$-character.
Let $(\sY_{i,x})_{i\in\Gamma_0}$ be the operator version of the $\sY$-variable ($\sY$-operator),%
\footnote{%
In the algebraic construction of the $qq$-character based on the vertex operators, $x$ is interpreted as a formal variable. %the variable $x$ is interpreted as $x \in \mathbb{C}^\times$. 
In the geometric construction, on the other hand, it is interpreted as the $G_w$-weight as discussed in \S\ref{sec:geom_const}.} 
and define an operator-valued monomial ${:\sY_{w,x}:} = {:\prod_{i\in\Gamma_0} \prod_{\alpha=1}^{w_i} \sY_{i,x_{i,\alpha}}:}$, where we denote by ${:\cdot:}$ the normal ordering symbol, so that there is no ambiguity of the operator ordering for this monomial.
\begin{definition}\label{def:qq-ch_alg}
 Let $\mathbf{M}_{w,x}$ be a standard module of $U_q(\widehat{\mathfrak{g}}_\Gamma)$.
 The operator-valued $qq$-character (operatorial $qq$-character) $\sT_{w,x}^{(q_1,q_2)} = \mathsf{T}^{(q_1,q_2)}[\mathbf{M}_{w,x}]$ is a Laurent polynomial of the $\sY$-operators and their derivatives, that contains the highest-weight monomial $:\sY_{w,x}:$ and the remaining monomials are generated in order that $\sT_{w,x}^{(q_1,q_2)}$ commutes with the screening charges, $(\mathsf{Q}_{i,x})_{i\in\Gamma_0}$.
\end{definition}

See Definition~\ref{def:sc_charge} for the definition of the screening charges.
This definition of the $qq$-character is extended to infinite-dimensional modules of the quantum toroidal algebras. See \S\ref{sec:A_0^} and \S\ref{sec:A_r^}.
The operatorial $qq$-character of the fundamental representation of the quantum affine algebra is nothing but the generating current of the deformed W-algebra in the construction of Frenkel and Reshetikhin~\cite{Frenkel:1997lee}.
By construction, we have $\sT^{(q_1,q_2)}_{w,x} \in \bigcap_{i\in\Gamma_0} \operatorname{Ker}(\operatorname{ad}_{\mathsf{Q}_{i}})$, which is an analog of Theorem~\ref{thm:q-ch_ker} for the $qq$-character.

The $qq$-character has a similar form to the $q$-character,
\begin{align}
 \mathsf{T}^{(q_1,q_2)}_{w,x} = \sum_\lambda Z_\lambda {:\mathsf{Y}_\lambda^{(q_1,q_2)}:} \, , \label{eq:qq-ch_weight_decomposition}
\end{align}
where $Z_\lambda$ is a rational function of $q_1$, $q_2$, and $x$ and $:\sY_\lambda^{(q_1,q_2)}:$ is a monomial of the $\sY$-operators and their derivarives.
Therefore, the $qq$-character takes a value in the differential polynomial ring, $\mathbb{Q}(q_1,q_2,x)\{\sY_{i,x}^{\pm 1}\}_{i\in\Gamma_0}$. 
We emphasize that there is no ambiguity of the $\sY$-operator ordering in the monomials in the expression \eqref{eq:qq-ch_weight_decomposition} since they occur in the normal ordering symbol. 
Hence, we may replace the $\sY$-operators with the formal commutative variables as in the case of the $q$-character, and we don't write the normal ordering symbol explicitly when no confusion can arise.
We provide details on the construction in \S\ref{sec:construction}.
We remark that the same construction is formally applicable to the highest-weight modules of the quantum toroidal algebras although they will be infinite dimensional.
See \S\ref{sec:A_0^} and \S\ref{sec:A_r^}.

%As in the case of the $q$-character, the $qq$-character is generated from the highest-weight monomial by multiplying the so-called $\sA$-operator, $(\sA_{i,x})_{i\in\Gamma_0}$, which is a monomial of the $\sY$-operators (Definition~\ref{def:YA_op}).
%This process is called the iWeyl reflection (\S\ref{sec:iWeyl_ref}).

%The $qq$-character is constructed to be in the commutant of the corresponding screening charge $(\mathsf{Q}_{i,x})_{i\in\Gamma_0}$, i.e., $\mathsf{T}^{(q_1,q_2)} \in \bigcap_{i\in\Gamma_0} \operatorname{Ker}(\operatorname{ad}_{\mathsf{Q}_{i}})$, where this commutativity fixes the coefficient $Z_\lambda$ for each $\ell$-weight. \rem{def of $\sT^{(q_{1,2})}$}
%We provide details on the construction in \S\ref{sec:construction}.

\subsection{Irreducibility of $qq$-character}\label{sec:intro_irr}

Let us briefly demonstrate how the specialization of the spectral parameters affects the irreducibility of the $q$- and $qq$-characters.
As in the case of the $q$-character, we introduce the notion of the irreducibility for the $qq$-character.
\begin{definition}\label{def:dominant}
    A dominant monomial is a monomial of the $\sY$-variables that contains only non-negative powers. 
    We define the irreducible $qq$-character as the one that contains a unique dominant monomial, i.e., the highest-weight monomial of the corresponding module.
%    This unique dominant monomial is called the highest-weight monomial of the corresponding $qq$-character.
\end{definition}

%We denote the $q$- and $qq$-characters associated with the weight dimension vector $w$ and the spectral parameters $x$ by $\sT^{(\mathsf{q})}_{w,x}$ and $\sT^{(q_{1},q_2)}_{w,x}$.
%Both are constructed from the $\Gamma_0$-graded formal variable $(\sY_{i,x})_{i \in \Gamma_0}$, that we call the $\sY$-variable (or the $\sY$-operator in the vertex operator formalism),\footnote{%In the algebraic construction of the $qq$-character based on the vertex operators, $x$ is interpreted as a formal variable. %the variable $x$ is interpreted as $x \in \mathbb{C}^\times$. 
%In the geometric construction, on the other hand, it is interpreted as the $G_w$-weight as discussed in \S\ref{sec:geom_const}.} 
%which is associated with the fundamental weight of the corresponding algebra.
The simplest example is the weight one module of the quantum affine algebra $U_q(\widehat{\mathfrak{sl}}_2)$ associated with the $A_1$ quiver $\mathfrak{g}_{A_1} = \mathfrak{sl}_2$.
Denoting $\sY_{1,x} = \sY_x$, we have the $q$- and $qq$-characters as follows:%
\footnote{
Compared with the original notation~\cite{Frenkel:1998ojj}, our parametrization is given by $\mathsf{q}^{\frac{1}{2}} = q_\text{FR}$ for the $q$-character.
}
%\begin{subequations}
\begin{align}
    \sT_{1,x}^{(\mathsf{q})} & = \sY_x + \sY_{x \mathsf{q}}^{-1}
    \, , \qquad
    \sT_{1,x}^{(q_1,q_2)} = \sY_x + \sY_{x q_1 q_2}^{-1}
    \, .
    \label{eq:qq-ch_A1_1}
\end{align}
%\end{subequations}
Apparently, there is no difference on the functional form for the $q$- and $qq$-characters under the identification $\mathsf{q} = q_1 q_2$.
%the $qq$-character is reduced to the $q$-character in the limits, $\lim_{q_{1} \to 1} \sT_{1,x}^{(q_{1},q_2)} = \sT_{1,x}^{(q_{2})}$ and $\lim_{q_{2} \to 1} \sT_{1,x}^{(q_{1},q_2)} = \sT_{1,x}^{(q_{1})}$.
%\rem{We remark that the product of the deformation parameters $q_1 q_2 = q$ is identified with the $q$-parameter of $U_q(\widehat{\mathfrak{g}})$}

The next example is the weight two module.
For generic spectral parameters $x = (x_1,x_2)$, the $qq$-character of weight two is given by
\begin{align}
    \sT_{2,x}^{(q_1, q_2)}
    & = \sY_{x_1} \sY_{x_2} + \scS\left( \frac{x_2}{x_1} \right) \frac{\sY_{x_2}}{\sY_{x_1 q_1 q_2}} + \scS\left( \frac{x_1}{x_2} \right) \frac{\sY_{x_1}}{\sY_{x_2 q_1 q_2}} + \frac{1}{\sY_{x_1 q_1 q_2} \sY_{x_2 q_1 q_2}}
    \, ,
    \label{eq:qq-ch_A1_2}
\end{align}
where we have a rational function, called the $\scS$-function, in the coefficient (see \S\ref{sec:S-func}).
This $qq$-character corresponds to the tensor product module of the weight one modules, which has a unique dominant monomial, therefore it is irreducible by Definition~\ref{def:dominant}.
%\footnote{From this point of view, the $qq$-character $\sT_{2,x}^{(q_1, q_2)}$ is irreducible, but not corresponding to the irreducible representation of the associated classical Lie algebra, which is $\mathfrak{sl}_2$ in this case.
%In this paper, the notion of irreducibility is used in the latter sense. }
%We then specialize the spectral parameters.
Setting $x_2 = x_1 q_1 q_2$ ($x_1 = x_2 q_1 q_2$, resp.), the second (the third) term becomes a dominant monomial, $\sY_{x_2}/\sY_{x_1 q_1 q_2} \xrightarrow{x_2 = x_1 q_1 q_2} 1$, which is however singular since it hits the pole of the $\scS$-function.
In other words, we may find another dominant monomial in the residue of the $qq$-character.
The poles of the second and third terms at $x_1 = x_2$ are canceled with each other, and thus the $qq$-character is regular at $x_1=x_2$.
Since there is no further dominant monomial on this locus, the $qq$-character is irreducible for any $(x_1,x_2)$ as long as it is regular, i.e., except on the singular loci.

Moreover, noticing that the $\scS$-function has zeros $\scS(x) = 0$ at $x = q_1, q_2$, we put $x = (x, xq_1)$ and obtain the reduced $qq$-character that consists of three monomials,
\begin{align}
    \sT_{2,x}^{(q_1, q_2)} 
    \ \xrightarrow{x \to (x, xq_1)} \ 
    \widetilde{\sT}_{2,x}^{(q_1, q_2)} =
    \sY_x \sY_{x q_1} + \scS(q_1^{-1}) \frac{\sY_x}{\sY_{x q_1^2 q_2}} + \frac{1}{\sY_{x q_1 q_2} \sY_{x q_1^2 q_2}}
    \, .
\end{align}
We denote such a reduced version of the $qq$-character by $\widetilde{\sT}_{2,x}^{(q_1, q_2)}$, which corresponds to the three-dimensional module of $U_q(\widehat{\mathfrak{sl}}_2)$.%
\footnote{%
By Nekrasov's compactness theorem~\cite{Nekrasov:2016qym}, a reduced $qq$-character is in general regular with respect to the spectral parameter $x$.
}
%\rem{Definition of the irreducible $qq$-character: it depends on a single $x$ and no further decomposition; Emphasize that $\widetilde{\sT}$ depends only on a single $x$}
%Such a reduction mechanism of the $qq$-character is the main subject of this paper.
Noticing $\lim_{q_1 \to 1} \scS(q_1^{-1}) = 2$ and $\lim_{q_2 \to 1} \scS(q_1^{-1}) = 1$, the reduced $qq$-character $\widetilde{\sT}_{2,x}^{(q_1, q_2)}$ is further reduced to the $q$-characters as follows:
\begin{subequations}
\begin{align}
    \lim_{q_1 \to 1} \widetilde{\sT}_{2,x}^{(q_1, q_2)} & = \sY_x^2 + 2 \frac{\sY_x}{\sY_{x q_2}} + \sY_{x q_2}^{-2} = \left( \sY_x + \sY_{x q_2}^{-1} \right)^2 = \left( \sT_{1,x}^{(q_2)} \right)^2
    \, , \\
    \lim_{q_2 \to 1} \widetilde{\sT}_{2,x}^{(q_1, q_2)} & = \sY_x \sY_{x q_1} + \frac{\sY_{x}}{\sY_{x q_1^2}} + \frac{1}{\sY_{x q_1} \sY_{x q_1^2}} = \sT_{2,x}^{(q_1)}
    \, .
\end{align}
\end{subequations}
See~\cite[\S4.1]{Frenkel:1998ojj} for the $q$-character formula for $U_q(\widehat{\mathfrak{sl}}_2)$ .
This implies that we have the irreducible three-dimensional module in the limit $q_2 \to 1$, while we have the tensor product module of the same spectral parameters %, which is not irreducible, 
in the other limit $q_1 \to 1$.
Such a reduction to the irreducible module is specific to the $qq$-character.
For example, the product of the $q$-characters with the spectral parameters $(x,x \mathsf{q})$ is given by
\begin{align}
    \sT_{1,x}^{(\mathsf{q})} \sT_{1,x\mathsf{q}}^{(\mathsf{q})} = \sY_x \sY_{x \mathsf{q}} + \frac{\sY_x}{\sY_{x \mathsf{q}^2}} + \frac{1}{\sY_{x \mathsf{q}} \sY_{x \mathsf{q}^2}} + 1
    = \sT_{2,x}^{(\mathsf{q})} + \sT_{0,x}^{(\mathsf{q})}
    \, ,
\end{align}
which is reducible since there are two dominant monomials, $\sY_x \sY_{x \mathsf{q}}$ and $1$, and we write the corresponding $q$-characters by $\sT_{2,x}^{(\mathsf{q})}$ and $\sT_{0,x}^{(\mathsf{q})}$.%
\footnote{%
This relation among the $q$-characters $(\sT_{w,x}^{(\mathsf{q})})_{w = 0,1,2}$ is interpreted as the simplest example of the T-system. See, e.g.,~\cite{Kuniba:2010ir}.
}
See \S\ref{sec:A1} for more detailed analysis on $A_1$ quiver.

We remark that since the $qq$-character is symmetric under exchanging $q_1 \leftrightarrow q_2$ in this example, we may also consider another specialization $x = (x,xq_2)$ to obtain the irreducible $qq$-character.
Such a symmetry between $q_1$ and $q_2$ is expected to hold for the Kirillov--Reshetikhin (KR) module of simply-laced cases~\cite{Kirillov:1987JSM}, while it would not be guaranteed in other cases.
See \S\ref{sec:A2_w11} for $A_2$ quiver (weight $w = (1,1)$) and \S\ref{sec:BC2} for $B_2/C_2$ quiver examples.

\subsection{Higgs mechanism}

Specialization of the spectral parameters demonstrated above has a natural interpretation as the Higgs mechanism.
Geometrically realizing the $qq$-character in the eight-dimensional setup, called the gauge origami, the spectral parameters are interpreted as the Coulomb moduli of the dual gauge theory defined in the transverse surface~\cite{Nekrasov:2016ydq}.
From this point of view, the gauge symmetry is originally described by $\mathrm{U}(\mathbf{W}) = \prod_{i \in \Gamma_0} \mathrm{U}(w_i)$, which is the maximal compact subgroup of $G_w = \mathrm{GL}(\mathbf{W})$.
Specialization of the spectral parameters implies freezing all the Coulomb moduli except for the center of mass factor.
Therefore, we have the symmetry breaking, $\mathrm{U}(\mathbf{W}) \to \mathrm{U}(1)$, freezing the non-Abelian part of the gauge group (W- and Z-bosons).
The $\mathrm{U}(1)$ factors of each $\mathrm{U}(w_i)$ are fixed by the bifundamental mass parameters except for the overall center of mass factor in this context.
It has been established that the $qq$-character has no singularity with respect to this center of mass factor, a.k.a., the compactness theorem~\cite{Nekrasov:2016qym}.
%See also Remark~\ref{rem:singularity}.

Such an interpretation of the parameter specialization as Higgsing has been discussed in various contexts.
It has been known that one can move on to the root of Higgs branch of the moduli space of supersymmetric vacua by tuning the Coulomb moduli~\cite{Dorey:1998yh,Dorey:1999zk}.
In the context of topological string theory, the idea of Higgsing is used to describe the geometric transition~\cite{Gopakumar:1998ki}.
For the primary example of the transition between resolved and deformed conifolds, one may discuss open string degrees of freedom on the Lagrangian submanifold on the deformed conifold side, which is given as the cotangent bundle $T^\vee\mathbb{S}^3$ and its Lagrangian $\mathbb{S}^3$.
From this point of view, the Higgsing condition on the K\"ahler parameter is interpreted as the flux quantization condition.
In this direction, we may apply a similar interpretation to the current situation.
Geometrically, the irreducible part corresponds to a subvariety of the whole quiver variety~\cite{Nakajima:1999JAMS}.
%The graded quiver variety is decomposed into direct product under the $q$-segment condition~\cite[\S{4}]{Nakajima:1999JAMS}.
%which is not Lagrangian in general.
Hence, the Higgsed $qq$-character, which describes the irreducible module, is expected to correspond to this irreducible subvariety of the quiver variety.
This is a geometric representation theoretical interpretation of the Higgs mechanism.

\subsection{Summary and organization}

%In this paper, we have conjectures, which are proved for $A_1$ quiver, regarding the $qq$-character and its irreducibility through the Higgsing process.
In order to present the statement, we prepare the following definitions.
In this paper, we consider the vertex operator formalism of the $qq$-character (Definition~\ref{def:qq-ch_alg}).
See also a geometric definition in Definition~\ref{def:qq-ch}.
\begin{definition}\label{def:non-highest} 
    A non-highest dominant monomial is a dominant monomial in a $qq$-character that is not the highest-weight monomial.
    A non-highest $qq$-character is the $qq$-character %whose highest-weight monomial is the non-highest dominant monomial. 
 that is generated from a non-highest dominant monomial. %, i.e., the highest-weight monomial of the non-highest $qq$-character is a non-highest dominant monomial.
\end{definition}

We consider the support of the $qq$-character denoted by $\operatorname{supp}_{\sY}(\mathsf{T}^{(q_1,q_2)})$, which is the set of monomials of the $\sY$-variables in $\mathsf{T}^{(q_1,q_2)}$ with non-zero coefficient.
The number of monomials in $\mathsf{T}^{(q_1,q_2)}$ is then given by $|\operatorname{supp}_{\sY}(\mathsf{T}^{(q_1,q_2)})|$.

\begin{definition}
    For a given weight dimension vector $w$, a maximally reduced $qq$-character $\widetilde{\mathsf{T}}_{w,x}^{(q_1,q_2)}$ is a $qq$-character with the minimal-support specialization of the $qq$-character denoted by $\mathsf{T}_{w,x}^{(q_1,q_2)}$ with generic spectral parameters $x = (x_{i,\alpha})_{i \in \Gamma_0,\alpha=1,\ldots,w_i}$. $\widetilde{\mathsf{T}}_{w,x}^{(q_1,q_2)}$ depends on a unique spectral parameter.
\end{definition}
For a given $qq$-character $\mathsf{T}_{w,x}^{(q_1,q_2)}$, specialization of the spectral parameters to obtain $\widetilde{\mathsf{T}}_{w,x}^{(q_1,q_2)}$ is expected to be unique for the KR modules, while it is not the case for general modules.
See Conjecture~\ref{conj:irreducibility} and examples of $A_2$ quiver (\S\ref{sec:A2_w11}) and $B_2/C_2$ quiver (\S\ref{sec:BC2_11}).

Then, we have the following Conjectures regarding the irreducibility of the $qq$-character.
\begin{conjecture}\label{conj:irr}
    The $qq$-character of the finite-dimensional module of the quantum affine algebra is irreducible for any spectral parameters except on the singular loci, whereas the residue of each pole contains a non-highest $qq$-character.
\end{conjecture}
\if0
\begin{conjecture}\label{conj:gen}
Let $\Gamma$ be a finite-type quiver, i.e., $\Gamma = ABCDEFG$, and $w = (w_i)_{i\in\Gamma_0} \in \mathbb{Z}_{\ge 0}^{\operatorname{rk} \Gamma}$ be a general weight dimension vector.
Then, there exists a specialization of the spectral parameters $x = (x_{i,\alpha})$, such that the $qq$-character is reduced to that for the irreducible highest-weight module parametrized by $w$.
\end{conjecture}
%\noindent
This Conjecture is for general finite-dimensional modules and for general finite-type quivers.%
\footnote{For the non-simply-laced cases, i.e., $\Gamma = BCDFG$, we consider the \emph{fractional quiver variety} to define the $qq$-character in a geometric setup~\cite{KPfractional}. See \rem{Appendix}.}
We may extend this discussion to $A$-type affine quivers, which gives rise to the tensor product modules of the Fock modules of the associated quantum toroidal algebras (Propositions \ref{prop:Z_1} and \ref{prop:Z_r}).
\fi
\begin{remark}\label{rem:singularity}
    In this paper, the singularity of the $qq$-character means that of the coefficient of each monomial, i.e., the $\scS$-functions appearing in the $qq$-character.
    We assume that each $\sY$-variable does not cause any singularities.    
\end{remark}

We examine this Conjecture with various examples in this paper.
Moreover, we have another specific Conjecture regarding the KR module.
\begin{conjecture}[Theorems~\ref{thm:A_1_I},~\ref{thm:A_1_II} for $A_1$ quiver]\label{conj:irreducibility}
Let $\Gamma$ be a finite-type quiver and fix $i \in \Gamma_0$. 
Considering the weight dimension vector having a single non-zero entry, $w_i > 0$ and $w_j = 0$ for $j \neq i$, the following holds:
\begin{enumerate}
%    \item The $qq$-character is reduced to that for the irreducible highest-weight module if the spectral parameters $x = (x_{i,\alpha})$ obey the $q_1$-segment condition.
    \item\label{conj1} The maximally reduced $qq$-character is given by specialization of spectral parameters that obey the $q_1$-segment condition, $x = (x, xq_1^{d_i}, xq_1^{2d_i},\ldots,xq_1^{d_i(w_i-1)})$. 
    \item\label{conj2} In the limit $q_2 \to 1$, the maximally reduced $qq$-character is further reduced to the $q_1$-character of the corresponding simple module.
    \item\label{conj3} In the limit $q_1 \to 1$, the maximally reduced $qq$-character is factorized into the product of the $q_2$-characters of the $i$-th fundamental modules of the same spectral parameters.
%    Hence it is not irreducible in general.
\end{enumerate}
\end{conjecture}

The remaining part of this paper is organized as follows:
In \S\ref{sec:construction}, as a preliminary, we explain the vertex operator formalism of the $qq$-character for the standard module, which also has a geometric realization in terms of the quiver variety.
In \S\ref{sec:A1}, we consider $A_1$ quiver as a primary example and provide a proof of Conjecture~\ref{conj:irreducibility} in this case.
In \S\ref{sec:A2} and \S\ref{sec:BC2}, we consider $A_2$ and $B_2/C_2$ quivers to show several examples, which support Conjecture~\ref{conj:irreducibility} for the KR modules and Conjecture~\ref{conj:irr} for other cases.
In \S\ref{sec:A_0^} and \S\ref{sec:A_r^}, we consider affine quivers $\widehat{A}_0$ and $\widehat{A}_r$ and %prove Conjecture~\ref{conj:gen} for 
discuss the tensor product of the Fock modules of the corresponding quantum toroidal algebras.

\if0
\subsection*{Notes}
\fi

\subsubsection*{Acknowledgments}

I would like to thank Ryo Fujita and Vasily Pestun for useful discussions, and especially Hiraku Nakajima for patient explanations about various aspects of the geometric representation theory and encouragements.
Nakajima-sensei, kanreki omedetou gozaimasu!
This work was partly supported by CNRS through the MITI interdisciplinary programs, ``Investissements d'Avenir'' program, Project ISITE-BFC (No.~ANR-15-IDEX-0003), EIPHI Graduate School (No.~ANR-17-EURE-0002), and Bourgogne-Franche-Comté region.
I'm also grateful to the anonymous referee for valuable comments and suggestions.

\section{Construction of $qq$-character}\label{sec:construction}

In this Section, we explain the vertex operator formalism of the $qq$-character for the standard module, i.e., the module constructed as the tensor product of the fundamental modules,%
\footnote{%
Namely, the module that allows a geometric construction based on the quiver variety as discussed in \S\ref{sec:geom_const}.} 
and an algorithm to calculate it based on the iWeyl reflection~\cite{Kimura:2015rgi}.
%We apply the algebraic construction based on the iWeyl reflection.
%See \cite{Nekrasov:2015wsu,KPfractional} for the alternative geometric construction based on the equivariant integral over the quiver variety.

\subsection{Quiver}

Let $\Gamma = (\Gamma_0, \Gamma_1)$ be a quiver with the set of nodes $\Gamma_0$ and the set of edges $\Gamma_1$.
We call $\operatorname{rk} \Gamma = |\Gamma_0|$ the rank of quiver.
In this paper, we consider the double quiver, so that there exist the dual edge $e^\vee$ for any $e \in \Gamma_1$.
We define a decorated quiver $\Gamma_d = (\Gamma, d)$ with the set of positive integers $d = (d_i)_{i \in \Gamma_0} \in \mathbb{Z}_{\ge 1}^{\operatorname{rk} \Gamma}$ assigned to each node, which would be identified with the relative root length of the corresponding root system.
We denote $d_{ij} = \operatorname{gcd}(d_i,d_j)$.

\begin{definition}\label{def:Cartan_matrix}
%\rem{We may move this Definition later}
Let $q_1, q_2 \in \mathbb{C}^\times$ and $\nu_e \in \mathbb{C}^\times$ for $e \in \Gamma_1$.
We define a deformation of the Cartan matrix associated with the decorated quiver $\Gamma_d$,
\begin{align}
    c_{ji} = (1 + q_1^{d_i} q_2) \delta_{ij} - \sum_{e:i \to j} \sum_{r=0}^{d_i/d_{ij}-1} \nu_e q_1^{rd_{ij}} - \sum_{e: j \to i} \sum_{r=0}^{d_i/d_{ij}-1} \nu_e^{-1} q_1^{(r+1)d_{ij}} q_2
    \qquad 
    (i,j \in \Gamma_0)
    \, .
    \label{eq:Cartan_matrix}
\end{align}
We also define the quiver Cartan matrix by $c^{[0]} = ({c}_{ji}^{[0]})_{i,j\in\Gamma_0}$ as the classical limit of the deformed Cartan matrix, ${c}_{ji}^{[0]} = \lim_{q_1, q_2, \nu_e \to 1} c_{ji}$.
\end{definition}
We may trivialize the parameters $(\nu_e)_{e \in \Gamma_1}$ for all acyclic quivers (see, e.g., \cite{Nekrasov:2013xda}). 
%except for the cyclic quiver.
Hence, we simply put $\nu_e = 1$ except in \S\ref{sec:A_0^} and \S\ref{sec:A_r^} where we discuss cyclic quivers.

%\if0
\begin{remark}
In this paper, we apply a different convention for the deformation parameters compared with our previous notation~\cite{Kimura:2015rgi,Kimura:2016dys,Kimura:2017hez}, i.e., $(q_1, q_2, \{\nu_e\}_{e\in\Gamma_1}) \leftrightarrow (q_1^{-1},q_2^{-1},\{\nu_e^{-1}\}_{e\in\Gamma_1})$.
\end{remark}
%\fi

\subsection{$\scS$-function}\label{sec:S-func}

%Let $z \in \mathbb{C}^\times$.
We define a rational function that we call the $\scS$-function,
\begin{align}
    \scS(x)
    = \frac{(1 - x / q_1)(1 - x / q_2)}{(1 - x)(1 - x / q_1 q_2)}
    \, ,
%    = \exp \qty( \sum_{n=1}^\infty \frac{z^n}{n} (1 - q_1^{-n})(1 - q_2^{-n} ) )
    \label{eq:S-fn}
\end{align}
%where the latter expression is available for $|z|<1$.
which obeys the inversion relation for $x \neq 1, q_1 q_2$, 
\begin{align}
    \scS(x) & = \scS(q_1 q_2/x)
%    \nonumber \\
%    & = \exp \qty( \sum_{n=1}^\infty \frac{z^{-n}}{n} (1 - q_1^{n})(1 - q_2^{n} ) )
%    \qquad (|z| > 1)
    \, .
    \label{eq:S_reflection}
\end{align}
We also define the higher degree $\scS$-function,
\begin{align}
    \scS_r(x) = \frac{(1 - x / q_1^r)(1 - x / q_2)}{(1 - x)(1 - x / q_1^r q_2)} = \prod_{s=0}^{r-1} \scS(x q_1^{-s}) \, ,
\end{align}
with the inversion relation
\begin{align}
    \scS_r(x) = \scS_r(q_1^r q_2 / x) 
    \, , \qquad 
    x \neq 1, q_1^r q_2 \, .
    \label{eq:S_reflection2}
\end{align}
For generic $x$, the $\scS$-function becomes trivial in the classical limit,
\begin{align}
    \scS_r(x) \ \xrightarrow{q_1, q_2 \to 1} \ 1 \, .
\end{align}

More precisely, the inversion relations \eqref{eq:S_reflection} and \eqref{eq:S_reflection2} are described as follows~\cite{Kimura:2020jxl}. 
\begin{lemma}\label{lem:S_inversion}
    Let $\delta(x) = \sum_{n \in \mathbb{Z}} x^n$ be the delta function.
    The difference between $\scS_r(x)$ and $\scS_r(q_1^r q_2 / x)$ is given by
    \begin{align}
        \scS_r(x) - \scS_r(q_1^r q_2 / x) = \frac{(1-q_1^{-r})(1-q_2^{-1})}{1-q_1^{-r}q_2^{-1}} \left( \delta(x) - \delta(q_1^{r}q_2/x) \right) \, ,
    \end{align}
    where $\scS_r(x)$ and $\scS_r(q_1^r q_2 / x)$ are interpreted as formal series of $x$ and $x^{-1}$, respectively.
\end{lemma}
\begin{proof}
Consider the case $r = 1$ for simplicity.    
Then, we may rewrite the $\scS$-function as follows,
\begin{subequations}
\begin{align}
    \scS(x) & = \frac{(1 - x / q_1)(1 - x / q_2)}{(1 - 1 / q_1 q_2) x} \left( \frac{1}{1 - x} - \frac{1}{1 - x/q_1 q_2} \right) \, , \\
    \scS(q_1 q_2 / x) & = \frac{(1 - q_1/x)(1 - q_2/x)}{(1 - 1 / q_1 q_2) q_1 q_2 / x} \left( \frac{1}{1 - q_1 q_2 / x} - \frac{1}{1 - 1/x} \right) \, ,
\end{align}    
\end{subequations}    
from which we compute the difference,
\begin{align}
    \scS(x) - \scS(q_1 q_2 / x)
    & = \frac{(1 - x / q_1)(1 - x / q_2)}{(1 - 1 / q_1 q_2) x} \left( \frac{1}{1 - x} + \frac{1}{1 - x^{-1}} - \frac{1}{1 - x/q_1 q_2} - \frac{1}{1 - q_1 q_2 / x} \right) \nonumber \\
    & = \frac{(1 - x / q_1)(1 - x / q_2)}{(1 - 1 / q_1 q_2) x} \left( \delta(x) - \delta(q_1 q_2/x) \right) \, .
\end{align}
%Here we should understand 
%\begin{align}
%    \frac{1}{1 - z} = \sum_{n = 0}^\infty z^n \, , \qquad \frac{1}{1 - z^{-1}} = \sum_{n=0}^\infty z^{-n} \, , 
%\end{align}
Recalling the property of the delta function, $f(x) \delta(x) = f(1) \delta(x)$ for any formal series $f(x) \in \mathbb{C}\llbracket x^{\pm 1} \rrbracket$, we conclude the proof.
The same proof is applied for the case $r > 1$.
\end{proof}

\subsection{Vertex operators}

Let us introduce the vertex operators associated with the quiver, which are the building blocks of the $qq$-characters. 
The vertex operators considered in this paper have the free field realization, so that they are elements of $\operatorname{End}(\mathcal{F})\llbracket x^{\pm 1} \rrbracket$, the space of operator-valued series acting on the Fock space denoted by $\mathcal{F}$.
We apply the following notations:
\begin{itemize}
    \item 
    %Let $x, x' \in \mathbb{C}^\times$. 
    For the product of any vertex operators $\mathsf{V}_x$ and $\mathsf{V}'_{x'}$, we impose the radial ordering $|x| > |x'|$ for $\mathsf{V}_x \mathsf{V}'_{x'}$ and $|x'| > |x|$ for $\mathsf{V}'_{x'} \mathsf{V}_x$.
    
    \item 
    For any vertex operator $\mathsf{V}_x$, we write $\displaystyle \mathsf{V}_{i,x;j,k} = \mathsf{V}_{i,x q_1^j q_2^k}$.
    
    \item 
    We denote the normal ordering symbol by ${: \cdot :}$. 
    We can freely change the ordering of the vertex operator product inside this symbol.
\end{itemize}

\begin{definition}\label{def:YA_op}
%Let $x, x' \in \mathbb{C}^\times$.
We define the $\Gamma_0$-graded vertex operators $(\sY_{i,x})_{i \in \Gamma_0}$ and $(\sA_{i,x})_{i \in \Gamma_0}$ obeying the following relation,
\begin{align}
    \sA_{i,x;d_i,1} 
    & = {:\sY_{i,x} \sY_{i,x;d_i,1} \prod_{e:i \to j} \prod_{r=0}^{d_i/d_{ij}-1} \sY_{j,\nu_e x;rd_{ij},0}^{-1} \prod_{e:j \to i} \prod_{r=0}^{d_i/d_{ij}-1} \sY_{j,\nu_e^{-1}x;(r+1)d_{ij},1}^{-1} :}
    \, , \label{eq:A_def}
\end{align}
and also the operator product relations,
\begin{align}
    \sY_{i,x} \sA_{j,x'} = \scS_{d_i} \left( \frac{x'}{x} \right)^{-\delta_{ij}} {: \sY_{i,x} \sA_{j,x'} :} \, , \qquad 
    \sA_{j,x'} \sY_{i,x} = \scS_{d_i} \left( q_1^{d_i} q_2 \frac{x}{x'} \right)^{-\delta_{ij}} {: \sY_{i,x} \sA_{j,x'} :} \, . \label{eq:YA_prod}
\end{align}
\end{definition}
%These vertex operators have a free field realization based on the Heisenberg algebras~\cite{Kimura:2015rgi,Kimura:2016dys,Kimura:2017hez}.
In this paper, we use only the operator product relations, and an explicit free field realization is not necessary in the following discussion. 
See~\cite{Kimura:2015rgi,Kimura:2016dys,Kimura:2017hez} for the construction of these vertex operators based on the free field realization.

\begin{proposition}\label{prop:YA_comm}
    We have the following commutation relation, 
    \begin{subequations}
    \begin{align}
        \left[ \sY_{i,x} , \sA_{i,x'}^{-1} \right] & = \frac{(1-q_1^{-d_i})(1-q_2^{-1})}{1-q_1^{-r}q_2^{-1}} \left( \delta\left( \frac{x'}{x} \right) - \delta\left( q_1^{d_i} q_2 \frac{x}{x'} \right) \right) : \sY_{i,x} \sA_{i,x'}^{-1}:  \, , \\ \qquad 
        \left[ \sY_{i,x} , \sA_{j,x'}^{-1} \right] & = 0 \qquad (i \neq j) \, .
    \end{align}
    \end{subequations}
\end{proposition}
\begin{proof}
This directly follows from Lemma~\ref{lem:S_inversion}.
\end{proof}

%\begin{definition}\label{def:S_op}
%Let $x, x' \in \mathbb{C}^\times$.
We introduce the screening currents $(\mathsf{S}_{i,x})_{i\in \Gamma_0}$ (see~\cite{Kimura:2015rgi,Kimura:2016dys,Kimura:2017hez} for the free field realization), which obeys
\begin{align}
    \sA_{i,x} = q_1^{-d_i} {: \frac{\sS_{i,x}}{\sS_{i,q_2^{-1} x}} :} \, , \label{eq:S_def}
\end{align}
with the following operator product relation,
\begin{align}
    \sY_{i,x} \sS_{j,x'} = \left( \frac{1 - x'/x}{1 - q_1^{-d_i} x'/x} \right)^{\delta_{ij}} :\sY_{i,x} \sS_{j,x'}: \, , \qquad 
    \sS_{j,x'} \sY_{i,x} = \left( q_1^{d_i} \frac{1 - x/x'}{1 - q_1^{d_i} x/x'} \right)^{\delta_{ij}} :\sY_{i,x} \sS_{j,x'}: \, . \label{eq:YS_OPE}
\end{align}
%\end{definition}
This is consistent with the relations \eqref{eq:YA_prod} since the factor $q_1^{-d_i}$ in \eqref{eq:S_def} does not affect the operator product factor.

\begin{proposition}\label{prop:YS_comm}
We have the following commutation relation,
\begin{subequations}
    \begin{align}
        \left[ \sY_{i,x} , \sS_{i,x'} \right] & = (1 - q_1^{d_i}) \delta\left(q_1^{-{d_i}}\frac{x'}{x}\right) {:\sY_{i,x} \sS_{i,x'}:} \, , \\
        \left[ \sY_{i,x} , \sS_{j,x'} \right] & = 0 \qquad (i \neq j) \, .
    \end{align}
\end{subequations}
\end{proposition}

These vertex operators obey non-trivial commutation relations mentioned above. 
Meanwhile, they become all commutative in the limits $q_1 \to 1$ and $q_2 \to 1$.
It is clear for $\sY$ and $\sA$ from Proposition~\ref{prop:YA_comm}.
By Proposition~\ref{prop:YS_comm}, $\sY$ and $\sS$ commute with each other in the limit $q_1 \to 1$.
In the limit $q_2 \to 1$, on the other hand, the screening current itself is not well-defined from definition~\eqref{eq:S_def}.
Hence, the $q$-character obtained from the $qq$-character in these limits consists of monomials of the commutative $\sY$-variables.

\begin{definition}\label{def:sc_charge} 
We define the screening charge by the formal series of the screening current,
\begin{align}
    \mathsf{Q}_{i,x} = \sum_{k \in \mathbb{Z}} \sS_{i,x q_2^k} \, , \quad i \in \Gamma_0 \, . \label{eq:sc_charge}
\end{align}
\end{definition}

This definition of the screening charge is a $q$-analog of that for the ordinary Virasoro and W-algebras given by the contour integral of the screening currents.
In fact, we may interpret the screening charge~\eqref{eq:sc_charge} as the $qq$-character of the vector module of the quantum toroidal $\mathfrak{gl}_1$, which is not a highest-weight type~\cite{Kimura:2023bxy}.
We may also use the Jackson integral or the contour integral to define the screening charge for the deformed case with a specific integration contour~\cite{Aganagic:2017smx}.

\if0
\rem{To be updated: We will define the highest-weight $qq$-character}
Based on these vertex operators, we define the $qq$-character as follows.
For a dimension vector $w = (w_i)_{i \in \Gamma_0}$ and spectral parameters $x = (x_{i,\alpha})_{i \in \Gamma_0,\alpha = 1, \ldots,w_i}$, we define
\begin{align}
    \sY_{w,x} := {:\prod_{i \in \Gamma_0} \prod_{\alpha = 1}^{w_i} \sY_{i,x_{i,\alpha}} :}
    \, ,
\end{align}
which is a dominant monomial (Definition~\ref{def:dominant}).

\begin{definition}\label{def:qq-ch_alg} \rem{To be updated}
The $qq$-character generated from the dominant monomial $\sY_{w,x}$ is a formal series of $(\sY_{i,x}^{\pm 1})_{i\in\Gamma_0}$ and their derivatives, which commutes with the screening charge defined by
\begin{align}
    \mathsf{Q}_{i,x} = \sum_{k \in \mathbb{Z}} \sS_{i,x q_2^k} \, . \label{eq:sc_charge}
\end{align}
\end{definition}

If all the monomials contained in the $qq$-character take a form of $:\sY_{w,x} \sA^{-1} \cdots \sA^{-1}:$, it is called the highest-weight type, and we call the top one $\sY_{w,x}$ the highest-weight monomial.
We denote the $qq$-character associated with this highest-weight monomial by
\begin{align}
    \mathsf{T}_{w,x}^{(q_1,q_2)} = \sY_{w,x} + \cdots
\end{align}
In this paper, we focus on $qq$-characters of highest-weight type.
\fi

The operatorial $qq$-character is now constructed based on the $\sY$-operators and the screening charges (Definition~\ref{def:qq-ch_alg}).
Before discussing a concrete construction of the $qq$-character in \S\ref{sec:iWeyl_ref}, we address a remark as follows.
As established by Frenkel and Reshetikhin~\cite{Frenkel:1998ojj}, the $q$-character is a ring homomorphism from the Grothendieck ring of the category of finite-dimensional representations of $U_q(\widehat{\mathfrak{g}})$ denoted by $\operatorname{Rep}U_q(\widehat{\mathfrak{g}})$ to the Laurent polynomial ring $\mathbb{Z}[\mathsf{Y}_{i,x}^{\pm 1}]_{i\in\Gamma_0}$.
We emphasize that it is not the case for the $qq$-character. 
For example, the $qq$-character of the weight one and two modules of $U_q(\widehat{\mathfrak{sl}}_2)$ is given in \eqref{eq:qq-ch_A1_1} and \eqref{eq:qq-ch_A1_2}, respectively. 
In this case, we have the following relations,
\begin{align}
    \sT_{1,x_1}^{(q_1,q_2)} \sT_{1,x_2}^{(q_1,q_2)} = \mathsf{f}\left(\frac{x_2}{x_1}\right)^{-1} \sT_{2,x}^{(q_1, q_2)}
    \, , \qquad 
    \sT_{1,x_2}^{(q_1,q_2)} \sT_{1,x_1}^{(q_1,q_2)} = \mathsf{f}\left(\frac{x_1}{x_2}\right)^{-1} \sT_{2,x}^{(q_1, q_2)} \, ,
\end{align}
where we define
\begin{align}
    \mathsf{f}(z) = \exp \left( \sum_{n = 1}^\infty \frac{(1-q_1^{-n})(1-q_2^{-n})}{n(1 + q_1^{-n} q_2^{-n})} z^n \right) \, .
\end{align}
In fact, we have%
%\footnote{%
%This relation implies that the $qq$-character $\sT_{2,x}^{(q_1, q_2)}$ is not symmetric for $x = (x_1,x_2)$ in the strict sense. 
%We assume in the following discussions that the spectral parameters are generic, so that the $qq$-character is symmetric. 
%}
\begin{align}
    \mathsf{f}\left(\frac{x_2}{x_1}\right) \sT_{1,x_1}^{(q_1,q_2)} \sT_{1,x_2}^{(q_1,q_2)} - \mathsf{f}\left(\frac{x_1}{x_2}\right) \sT_{1,x_2}^{(q_1,q_2)} \sT_{1,x_1}^{(q_1,q_2)}
    %\nonumber \\
    & = \frac{(1-q_1^{-1})(1-q_2^{-1})}{(1-q_1^{-1}q_2^{-1})} \left( \delta\left(q_1q_2 \frac{x_2}{x_1}\right) - \delta\left(q_1q_2 \frac{x_1}{x_2}\right) \right) \, . \label{eq:quadratic_rel_A1}
\end{align}
This is the defining relation of the $q$-deformed Virasoro algebra~\cite{Shiraishi:1995rp}.
The $\mathsf{f}$-function comes from the operator product of the $\sY$-operator.
The delta functions on the RHS originate from the poles of the $\scS$-function.
Since the $qq$-character commutes with the screening charges, its product also does unless the $\mathsf{f}$-function is singular.
Hence, the product of $qq$-characters can be also a $qq$-character of higher weight modulo $\mathsf{f}$.
Although, we can construct the higher weight $qq$-characters starting from the fundamental $qq$-character in this way, its computation is not so efficient since we have to manage all the operator products of $\sY$ and $\sA$ operators.
In the following, we explain an alternative method to construct the $qq$-character.

%\rem{Product of $qq$-characters: We don't consider the product of $qq$-characters, so that we don't write the normal ordering symbol explicitly in the following.}

\begin{remark}\label{rmk:q-ch_commute}
 In the limits, $q_1 \to 1$ and $q_2 \to 1$, we have $\mathsf{f}(z) = 1$, and thr RHS of \eqref{eq:quadratic_rel_A1} vanishes, meaning that the $q$-character is commutative.
\end{remark}

\subsection{iWeyl reflection}\label{sec:iWeyl_ref}

Since the $\sY$-operator itself does not commute with the screening charge, we should consider an additional contribution to construct the $qq$-character.
We explain how to organize this process systematically and establish an algorithm to compute the $qq$-character.

\begin{lemma}
Let $\Gamma_d$ be a decorated quiver without a loop edge.
For the vertex operators associated with $\Gamma_d$ defined above, we have
\begin{align}
    \left[ \sY_{i,x} + :\sY_{i,x} \sA_{i,x;d_i,1}^{-1} : , \sS_{i,x'} \right] = (1 - q_1^{d_i}) \left( \delta\left(q_1^{-d_i}\frac{x'}{x}\right) :\sY_{i,x} \sS_{i,x'}: - \delta\left(q_1^{-d_i}q_2^{-1}\frac{x'}{x}\right) :\sY_{i,x} \sS_{i,x' q_2^{-1}}: \right) \, , \label{eq:iWeyl1}
\end{align}    
and then
\begin{align}
    \left[ \sY_{i,x} + :\sY_{i,x} \sA_{i,x;d_i,1}^{-1} : , \mathsf{Q}_{i,x'} \right] = 0 \, . \label{eq:iWeyl2}
\end{align} 
\end{lemma}
\begin{proof}
By Definition~\ref{def:YA_op}, we have
\begin{align}
    {:\sY_{i,x} \sA_{i,x;d_i,1}^{-1}:} = {: \sY_{i,x;d_i,1}^{-1} \prod_{e:i \to j} \prod_{r=0}^{d_i/d_{ij}-1} \sY_{j,\nu_e x;rd_{ij},0} \prod_{e:j \to i} \prod_{r=0}^{d_i/d_{ij}-1} \sY_{j,\nu_e^{-1}x;(r+1)d_{ij},1} :} \, ,
\end{align}
where $\sY_{i,x;d_i,1}^{-1}$ is the unique $\sY$-operator associated with the node $i \in \Gamma_0$ in this monomial under the no-loop assumption of the quiver. 
Since $\sS_i$ commutes with $\sY_j$ for $i \neq j$, we may focus only on this factor.
We have
\begin{align}
    \sY_{i,x;d_i,1}^{-1} \sS_{i,x'} = \frac{1 - x'/x q_1^{2d_i} q_2}{1 - x'/x q_1^{d_i} q_2} :\sY_{i,x;d_i,1}^{-1} \sS_{i,x'}: \, , \qquad
    \sS_{i,x'} \sY_{i,x;d_i,1}^{-1} = q_1^{-d_i} \frac{1 - q_1^{2d_i} q_2 x/x'}{1 - q_1^{d_i} q_2 x/x'} :\sY_{i,x;d_i,1}^{-1} \sS_{i,x'}: \, ,
\end{align}
which yields
\begin{align}
    \left[ \sY_{i,x;d_i,1}^{-1}, \sS_{i,x'} \right] = (1 - q_1^{-d_i}) \delta\left(q_1^{-d_i}q_2^{-1} \frac{x'}{x}\right) :\sY_{i,x;d_i,1}^{-1} \sS_{i,x'}: \, ,
\end{align}
and hence
\begin{align}
    \left[ :\sY_{i,x} \sA_{i,x;d_i,1}^{-1} : , \sS_{i,x'} \right] & = (1 - q_1^{-d_i}) \delta\left(q_1^{-d_i} q_2^{-1} \frac{x'}{x}\right) :\sY_{i,x} \sA_{i,x;d_i,1}^{-1} \sS_{i,x'}: \, . 
\end{align}
Together with \eqref{eq:S_def}, $\sA_{i,x;d_i,1} = q_1^{-d_i} : \sS_{i,x;d_i,1} \sS_{i,x q_1^{d_i}}^{-1}:$, we prove \eqref{eq:iWeyl1}.
Since the RHS of \eqref{eq:iWeyl1} is written as a total $q_2$-difference with respect to the variable $x'$, we also conclude \eqref{eq:iWeyl2}.
\end{proof}

We call this operation that converts $\sY_i$ to $\sY_i \sA_i^{-1}$ the iWeyl reflection,%
\footnote{%
In the context of the $q$-character, the operator involving this operation is called the screening operator in \cite{Frenkel:1998ojj}. In our formalism, we will see that the integral operator \eqref{eq:A_screening_ch} generates this operation instead of $\sS$ and $\mathsf{Q}$.
}
\begin{align}
    \text{iWeyl} \ : 
    \sY_{i,x} \ \longmapsto \ {:\sY_{i,x} \sA_{i,x;d_i,1}^{-1}:} 
    = {: \sY_{i,x;d_i,1}^{-1} \prod_{e:i \to j} \prod_{r=0}^{d_i/d_{ij}-1} \sY_{j, x;rd_{ij},0} \prod_{e:j \to i} \prod_{r=0}^{d_i/d_{ij}-1} \sY_{j,x;(r+1)d_{ij},1} :}
    \, . \label{eq:iWeyl_def}
\end{align}
Although this process assures the commutativity with the screening charge $\mathsf{Q}_i$ associated with the node $i \in \Gamma_0$ (local commutativity), the iWeyl reflection in general generates the $\sY$-operator of other nodes $j$.
Hence, we should recursively apply the process of the iWeyl reflection to construct the full $qq$-character, which commutes with all the screening charges (global commutativity).

In general, we have the following.
\begin{proposition}\label{prop:multi_iWeyl}
    Let $\Gamma_d$ be a decorated quiver without a loop edge.
    We assume that $\{x_1,\ldots,x_k\}$ are all distinct for $k \in \mathbb{Z}_{>0}$.
    Then, we have
    \begin{align}
        \left[ \sum_{I \sqcup J = \{1,\ldots,k\}} \prod_{\alpha \in I, \beta \in J} \scS_{d_i}\left(\frac{x_\alpha}{x_\beta}\right) :\prod_{\alpha=1}^k \sY_{i,x_\alpha} \prod_{\beta \in J} \sA_{i,x_\beta;d_i,1}^{-1}: , \mathsf{Q}_{i,x'} \right] = 0 \, .
    \end{align}
\end{proposition}

\if0
\begin{proposition}
\begin{subequations}
    \begin{align}
        \left[ :\sY_{i,x_1} \cdots \sY_{i,x_k}:, \sS_{i,x'} \right] & = (1 - q_1^{d_i}) {:\sY_{i,x} \cdots \sY_{i,x_k} \sS_{i,x'}:} \sum_{\alpha=1}^k  \delta\left(q_1^{-{d_i}}\frac{x'}{x_\alpha}\right) \prod_{\beta (\neq \alpha)} \frac{1 - q_1^{d_i} x_\alpha/x_\beta}{1 - x_\alpha/x_\beta} \, , \\
        \left[ :\sY_{i,x_1} \cdots \sY_{i,x_k}:, \sS_{j,x'} \right] & = 0 \qquad (i \neq j) \, .
    \end{align}
\end{subequations}
\end{proposition}
\fi

The following Lemma is essential to prove this Proposition.
\begin{lemma}\label{lemma:YAS_comm}
We fix $I$ and $J$, such that $I \sqcup J = \{1,\ldots,k\}$.
Assuming that $\{x_1,\ldots,x_k\}$ are all distinct, we have
\begin{align}
    & \left[ :\prod_{\alpha=1}^k \sY_{i,x_\alpha} \prod_{\beta \in J} \sA_{i,x_\beta;d_i,1}^{-1}: , \sS_{i,x'} \right] \nonumber \\
    & = (1 - q_1^{d_i}) \sum_{\alpha \in I} \delta\left(q_1^{-d_i}\frac{x'}{x_\alpha}\right) \prod_{\substack{\alpha'\in I \\ \alpha' \neq \alpha}} \frac{1 - q_1^{d_i} x_\alpha/x_{\alpha'}}{1 - x_\alpha/x_{\alpha'}} \prod_{\beta \in J} \frac{1 - x_\alpha/x_\beta q_1^{d_i} q_2}{1 - x_\alpha/x_\beta q_2} :\prod_{\alpha=1}^k \sY_{i,x_\alpha} \prod_{\beta \in J} \sA_{i,x_\beta;d_i,1}^{-1} \sS_{i,x'}: \nonumber \\
    & \quad - (1 - q_1^{d_i}) \sum_{\beta \in J} \delta\left(q_1^{-d_i}q_2^{-1} \frac{x'}{x_\beta}\right) \prod_{\alpha \in I} \frac{1 - q_1^{d_i}q_2 x_\beta / x_\alpha}{1 - q_2 x_\beta / x_\alpha} \prod_{\substack{\beta' \in J \\ \beta' \neq \beta}} \frac{1 - x_\beta / x_{\beta'} q_1^{d_i}}{1 - x_\beta / x_{\beta'}} :\prod_{\alpha=1}^k \sY_{i,x_\alpha} \prod_{\substack{\beta' \in J \\ \beta' \neq \beta}} \sA_{i,x_{\beta'};d_i,1}^{-1} \sS_{i,x'q_2^{-1}}:
\end{align}
\end{lemma}
\begin{proof}
We write $I = \{i_1, \ldots,i_l\}$ and $J = \{j_1,\ldots,j_{k-l}\}$.
Applying the operator product relation~\eqref{eq:YS_OPE}, we obtain
\begin{align}
    & \left[ :\prod_{\alpha=1}^k \sY_{i,x_\alpha} \prod_{\beta \in J} \sA_{i,x_\beta;d_i,1}^{-1}: , \sS_{i,x'} \right] \nonumber \\
    & = \Bigg( \prod_{\alpha = 1}^l \frac{1 - x'/x_{i_\alpha}}{1 - x'/x_{i_\alpha} q_1^{d_i}} \prod_{\beta = 1}^{k-l} \frac{1 -  x'/x_{j_\beta} q_1^{2d_i} q_2}{1 - x'/x_{j_\beta} q_1^{d_i} q_2} 
    \nonumber \\
    & \quad - \prod_{\alpha = 1}^l q_1^{d_i} \frac{1 - x_{i_\alpha}/x'}{1 - q_1^{d_i} x_{i_\alpha}/x'} \prod_{\beta = 1}^{k-l} q_1^{-d_i} \frac{1 -  x'/x_{j_\beta} q_1^{2d_i} q_2}{1 - x'/x_{j_\beta} q_1^{d_i} q_2} \Bigg) :\prod_{\alpha=1}^k \sY_{i,x_\alpha} \prod_{\beta \in J} \sA_{i,x_\beta;d_i,1}^{-1} \sS_{i,x'}: \, .
\end{align}
We may write
\begin{align}
    & \prod_{\alpha = 1}^l \frac{1 - x'/x_{i_\alpha}}{1 - x'/x_{i_\alpha} q_1^{d_i}} \prod_{\beta = 1}^{k-l} \frac{1 -  x'/x_{j_\beta} q_1^{2d_i} q_2}{1 - x'/x_{j_\beta} q_1^{d_i} q_2} - \prod_{\alpha = 1}^l q_1^{d_i} \frac{1 - x_{i_\alpha}/x'}{1 - q_1^{d_i} x_{i_\alpha}/x'} \prod_{\beta = 1}^{k-l} q_1^{-d_i} \frac{1 -  x'/x_{j_\beta} q_1^{2d_i} q_2}{1 - x'/x_{j_\beta} q_1^{d_i} q_2} \nonumber \\
    & = \prod_{\alpha = 1}^l \frac{1 - x'/x_{i_\alpha}}{1 - x'/x_{i_\alpha} q_1^{d_i}} \prod_{\beta = 1}^{k-l} \frac{1 -  x'/x_{j_\beta} q_1^{2d_i} q_2}{1 - x'/x_{j_\beta} q_1^{d_i} q_2} - q_1^{d_i} \frac{1 - x_{i_1}/x'}{1 - q_1^{d_i} x_{i_1}/x'} \prod_{\alpha = 2}^l \frac{1 - x'/x_{i_\alpha}}{1 - x'/x_{i_\alpha} q_1^{d_i}} \prod_{\beta = 1}^{k-l} q_1^{-d_i} \frac{1 -  x'/x_{j_\beta} q_1^{2d_i} q_2}{1 - x'/x_{j_\beta} q_1^{d_i} q_2} \nonumber \\
    & \quad + q_1^{d_i} \frac{1 - x_{i_1}/x'}{1 - q_1^{d_i} x_{i_1}/x'} \prod_{\alpha = 2}^l \frac{1 - x'/x_{i_\alpha}}{1 - x'/x_{i_\alpha} q_1^{d_i}} \prod_{\beta = 1}^{k-l} q_1^{-d_i} \frac{1 -  x'/x_{j_\beta} q_1^{2d_i} q_2}{1 - x'/x_{j_\beta} q_1^{d_i} q_2} - \prod_{\alpha = 1}^l q_1^{d_i} \frac{1 - x_{i_\alpha}/x'}{1 - q_1^{d_i} x_{i_\alpha}/x'} \prod_{\beta = 1}^{k-l} q_1^{-d_i} \frac{1 -  x'/x_{j_\beta} q_1^{2d_i} q_2}{1 - x'/x_{j_\beta} q_1^{d_i} q_2} \nonumber \\
    & = (1 - q_1^{d_i}) \delta\left(q_1^{-d_i}\frac{x'}{x_{i_1}}\right) \prod_{\alpha = 2}^l \frac{1 - x'/x_{i_\alpha}}{1 - x'/x_{i_\alpha} q_1^{d_i}} \prod_{\beta = 1}^{k-l} q_1^{-d_i} \frac{1 -  x'/x_{j_\beta} q_1^{2d_i} q_2}{1 - x'/x_{j_\beta} q_1^{d_i} q_2} \nonumber \\
    & \quad + q_1^{d_i} \frac{1 - x_{i_1}/x'}{1 - q_1^{d_i} x_{i_1}/x'} \prod_{\alpha = 2}^l \frac{1 - x'/x_{i_\alpha}}{1 - x'/x_{i_\alpha} q_1^{d_i}} \prod_{\beta = 1}^{k-l} q_1^{-d_i} \frac{1 -  x'/x_{j_\beta} q_1^{2d_i} q_2}{1 - x'/x_{j_\beta} q_1^{d_i} q_2} - \prod_{\alpha = 1}^l q_1^{d_i} \frac{1 - x_{i_\alpha}/x'}{1 - q_1^{d_i} x_{i_\alpha}/x'} \prod_{\beta = 1}^{k-l} q_1^{-d_i} \frac{1 -  x'/x_{j_\beta} q_1^{2d_i} q_2}{1 - x'/x_{j_\beta} q_1^{d_i} q_2} \, .
\end{align}
Repeating this process, we obtain
\begin{align}
    & (1 - q_1^{d_i}) \sum_{\alpha \in I} \delta\left(q_1^{-d_i}\frac{x'}{x_\alpha}\right) \prod_{\substack{\alpha'\in I \\ \alpha' \neq \alpha}} \frac{1 - q_1^{d_i} x_\alpha/x_{\alpha'}}{1 - x_\alpha/x_{\alpha'}} \prod_{\beta \in J} \frac{1 - x_\alpha/x_\beta q_1^{d_i} q_2}{1 - x_\alpha/x_\beta q_2} \nonumber \\
    & \quad + (1 - q_1^{-d_i}) \sum_{\beta \in J} \delta\left(q_1^{-d_i}q_2^{-1} \frac{x'}{x_\beta}\right) \prod_{\alpha \in I} \frac{1 - q_1^{d_i}q_2 x_\beta / x_\alpha}{1 - q_2 x_\beta / x_\alpha} \prod_{\substack{\beta' \in J \\ \beta' \neq \beta}} \frac{1 - x_\beta / x_{\beta'} q_1^{d_i}}{1 - x_\beta / x_{\beta'}} \, .
\end{align}
Recalling that
\begin{align}
    \delta\left(q_1^{-d_i}q_2^{-1} \frac{x'}{x_\beta}\right) {:\prod_{\alpha=1}^k \sY_{i,x_\alpha} \prod_{\beta' \in J} \sA_{i,x_{\beta'};d_i,1}^{-1} \sS_{i,x'}:} = q_1^{d_i} \delta\left(q_1^{-d_i}q_2^{-1} \frac{x'}{x_\beta}\right) :\prod_{\alpha=1}^k \sY_{i,x_\alpha} \prod_{\substack{\beta' \in J \\ \beta' \neq \beta}} \sA_{i,x_{\beta'};d_i,1}^{-1} \sS_{i,x' q_2^{-1}}: \, ,
\end{align}
we conclude the proof.
\end{proof}

\begin{proof}[Proof of Proposition~\ref{prop:multi_iWeyl}]
Fix $I$ and $J$ with $I \sqcup J = \{1,\ldots,k\}$.
For $\alpha \in I$, we have
\begin{align}
    & \prod_{\alpha' \in I, \beta \in J} \scS_{d_i}\left(\frac{x_{\alpha'}}{x_\beta}\right) \prod_{\substack{\alpha' \in I \\ \alpha' \neq \alpha}} \frac{1 - q_1^{d_i} x_\alpha/x_{\alpha'}}{1 - x_\alpha/x_{\alpha'}} \prod_{\beta \in J} \frac{1 - x_\alpha / x_\beta q_1^{d_i} q_2}{1 - x_\alpha / x_\beta q_2} \nonumber \\
    & = \prod_{\substack{\alpha' \in I, \alpha' \neq \alpha \\ \beta \in J}} \scS_{d_i}\left(\frac{x_{\alpha'}}{x_\beta}\right) \prod_{\substack{\alpha' \in I \\ \alpha' \neq \alpha}} \frac{1 - q_1^{d_i} x_\alpha/x_{\alpha'}}{1 - x_\alpha/x_{\alpha'}} \prod_{\beta \in J} \frac{1 - x_\alpha / x_\beta q_1^{d_i}}{1 - x_\alpha / x_\beta} 
\end{align}
and for $\beta \in J$, 
\begin{align}
    & \prod_{\alpha \in I, \beta' \in J} \scS_{d_i}\left(\frac{x_{\alpha}}{x_{\beta'}}\right) \prod_{\alpha \in I} \frac{1 - q_1^{d_i} q_2 x_\beta/x_{\alpha}}{1 - q_2 x_\beta/x_{\alpha}} \prod_{\substack{\beta' \in J \\ \beta' \neq \beta}} \frac{1 - x_\beta / x_{\beta'} q_1^{d_i}}{1 - x_\beta / x_{\beta'}} \nonumber \\
    & = \prod_{\substack{\alpha \in I \\ \beta' \in J, \beta' \neq \beta}} \scS_{d_i}\left(\frac{x_{\alpha}}{x_{\beta'}}\right) \prod_{\alpha \in I} \frac{1 - q_1^{d_i} x_\beta/x_{\alpha}}{1 - x_\beta/x_{\alpha}} \prod_{\substack{\beta' \in J \\ \beta' \neq \beta}} \frac{1 - x_\beta / x_{\beta'} q_1^{d_i}}{1 - x_\beta / x_{\beta'}} \, .
\end{align}
In the commutation relation, 
\begin{align}
    \left[ \sum_{I \sqcup J = \{1,\ldots,k\}} \prod_{\alpha \in I, \beta \in J} \scS_{d_i}\left(\frac{x_\alpha}{x_\beta}\right) :\prod_{\alpha=1}^k \sY_{i,x_\alpha} \prod_{\beta \in J} \sA_{i,x_\beta;d_i,1}^{-1}: , \mathsf{S}_{i,x'} \right] \, ,
\end{align}
there exist the following pair for any $\alpha \in I$ by Lemma~\ref{lemma:YAS_comm},
\begin{align}
    & (1 - q_1^{d_i}) \delta\left(q_1^{-d_i}\frac{x'}{x_\alpha}\right) \prod_{\alpha' \in I', \beta \in J} \scS_{d_i}\left(\frac{x_{\alpha'}}{x_\beta}\right) \prod_{\alpha' \in I'} \frac{1 - q_1^{d_i} x_\alpha/x_{\alpha'}}{1 - x_\alpha/x_{\alpha'}} \prod_{\beta \in J} \frac{1 - x_\alpha / x_\beta q_1^{d_i}}{1 - x_\alpha / x_\beta} :\prod_{\alpha'=1}^k \sY_{i,x_{\alpha'}} \prod_{\beta \in J} \sA_{i,x_\beta;d_i,1}^{-1} \sS_{i,x'}: \nonumber \\
    & \quad - (1 - q_1^{d_i}) \delta\left(q_1^{-d_i}q_2^{-1} \frac{x'}{x_\alpha}\right) \prod_{\alpha' \in I', \beta' \in J} \scS_{d_i}\left(\frac{x_{\alpha'}}{x_{\beta}}\right) \prod_{\alpha' \in I} \frac{1 - q_1^{d_i} x_\alpha/x_{\alpha'}}{1 - x_\alpha/x_{\alpha'}} \prod_{\beta \in J} \frac{1 - x_\alpha / x_{\beta} q_1^{d_i}}{1 - x_\alpha / x_{\beta}} :\prod_{\alpha'=1}^k \sY_{i,x_{\alpha'}} \prod_{\beta \in J} \sA_{i,x_\beta;d_i,1}^{-1} \sS_{i,x'q_2^{-1}}: \, ,
\end{align}
where $I' = I \backslash \{\alpha\}$.
As it is a $q_2$-difference with respect to $x'$, we conclude the proof.
\end{proof}

Summarizing these processes, we arrive at the following algorithm to compute the $qq$-character. 
\begin{enumerate}
    \item\label{alg:iWeyl1} Start with the highest-weight monomial associated with $w = (w_i)_{i \in \Gamma_0} \in \mathbb{Z}_{\ge 0}^{\operatorname{rk} \Gamma}$ with spectral parameters $x = (x_{i,\alpha})_{i\in \Gamma_0,\alpha =1,\ldots,w_i}$,%
    \footnote{%
    For $i \in \Gamma_0$, denote by $\mathbf{W}_i$ a vector space with the Grothendieck roots $(x_{i,\alpha})_{\alpha=1,\ldots,w_i}$, which would be identified with the $i$-th framing space of the quiver variety $\mathfrak{M}_{w,v}^\Gamma$ (see \S\ref{sec:geom_const}).
    Then, the Drinfeld polynomials $\{ \mathsf{P}_{i,u} \}_{i \in \Gamma_0}$ associated with the finite-dimensional module of the quantum affine algebra $U_q(\widehat{\mathfrak{g}}_\Gamma)$ are given by $\mathsf{P}_{i,u} = \operatorname{ch} \wedge_u \mathbf{W}_i$.
    }
    \begin{align}
        {:\sY_{w,x}:} = {:\prod_{i \in \Gamma_0} \prod_{\alpha = 1}^{w_i} \sY_{i,x_{i,\alpha}}:}
        \, .
    \end{align}
    % which corresponds to the quiver variety of $v = 0$.
    We assume that the spectral parameters are generic, so that the $\scS$-functions appearing in the iWeyl reflection are not singular.
    \item\label{alg:iWeyl2} Apply the iWeyl reflection for the $\sY$-operator to generate a monomial, 
    \begin{align}
        \text{iWeyl} \ : 
        \sY_{i,x} \ \longmapsto \ {:\sY_{i,x} \sA_{i,x;d_i,1}^{-1}:} 
        = {:\sY_{i,x;d_i,1}^{-1} \prod_{e:i \to j} \prod_{r=0}^{d_i/d_{ij}-1} \sY_{j, x;rd_{ij},0} \prod_{e:j \to i} \prod_{r=0}^{d_i/d_{ij}-1} \sY_{j,x;(r+1)d_{ij},1}:}
        \, .
    \end{align}
    The reflection is applied to each $\sY$-operator in the numerator of each monomial.
    If the same monomial is obtained also from other reflections, it is added only once. 
%    \end{enumerate}
    \begin{enumerate}
        \item\label{alg:iWeyl2a} If the monomial contains several $\sY$-operators which belong to the same node $i \in \Gamma_0$, multiply the $\scS$-functions:%
%    \footnote{%
%    This $\scS$-factor is interpreted as the operator product factor in the vertex operator formalism of $\mathsf{Y}$- and $\sA$-operators~\cite{Kimura:2015rgi}. \rem{Definition~\ref{def:YA_op}}
%    }
    \begin{align}
        :\sY_{i,x} \frac{\prod_{\alpha=1}^n \sY_{i,x_\alpha}}{\prod_{\beta=1}^m \sY_{i,x_\beta}}:
        \ \longmapsto \
        \qty(
        \frac{\prod_{\alpha=1}^n \scS_{d_i}(x_\alpha/x)}{\prod_{\beta=1}^m \scS_{d_i}(x_\beta/x)}
        )
        :\sY_{i,x} \sA_{i,x;d_i,1}^{-1} \frac{\prod_{\alpha=1}^n \sY_{i,x_\alpha}}{\prod_{\beta=1}^m \sY_{i,x_\beta}}:
        \, .
    \end{align}
    \item\label{alg:iWeyl2b}
    If the monomial contains the factor $\sY_{i,x} \sY_{i,x}$ or $\sY_{i,x} \sY_{i,xq_1^{d_i}q_2}$, it should be considered as $\lim_{x_1, x_2 \to x} \sY_{i,x_1} \sY_{i,x_2}$, $\lim_{x_1, x_2 \to x}\sY_{i,x_1} \sY_{i,x_2 q_1^{d_i}q_2}$, which gives rise to derivatives of the $\sY$-operator.
    The higher order term, e.g., $\qty(\sY_{i,x})^n$ ($n > 2$), is similarly understood.

%    \item If the monomial contains the factor $\qty(\sY_{i,x})^n$, it should be considered as $\lim_{x_\alpha \to x} \prod_{\alpha=1}^n\sY_{i,x_\alpha}$, which gives rise to derivatives of the $\sY$-operator.
    \end{enumerate}
%\begin{enumerate}
%    \setcounter{enumi}{2}
    \item\label{alg:iWeyl3} If there is no $\sY$-operator of positive powers, no further reflection is applied (lowest-weight monomial).
    For finite-type quivers (The corresponding quiver Cartan matrix $c^{[0]}$ (see Definition~\ref{def:Cartan_matrix}) is positive-definite%$\det c^{[0]} > 0$, where $c^{[0]}$ is the classical limit of the Cartan matrix~\eqref{eq:Cartan_matrix} obtained by replacing all the $\mathbb{C}^\times$ variables with one
    ), it is guaranteed that this process is terminated within finite time reflections, while the sequence of reflections are not terminated for generic quivers (affine and hyperbolic types; $c^{[0]}$ is not positive-definite).
\end{enumerate}

This is an algorithm to generate a $qq$-character that commutes with all the screening charges. 
In the step (\ref{alg:iWeyl2b}), the former combination $\sY_{i,x} \sY_{i,x}$ is always regularized using the limit, whereas the latter one $\sY_{i,x} \sY_{i,xq_1^{d_i}q_2}$ is not necessarily regular and may diverge due to the pole of the $\scS$-function.
If it diverges, this algorithm is not applicable at this moment. 
For example, the weight two $qq$-character of $A_1$ quiver \eqref{eq:qq-ch_A1_2} has the singular loci, $x_2 = x_1 q_1 q_2$, $x_1 (q_1 q_2)^{-1}$. % whereas the corresponding residue would be related to another $qq$-character.
See Conjecture~\ref{conj:irr} and Proposition~\ref{prop:residue_A1}.
We discuss in \S\ref{sec:D4} the $qq$-character for $D_4$ quiver, which involves the derivative term.

The $qq$-character of the tensor product module does not exhibits further decomposition for generic spectral parameters as in the case of the $q$-character.
%In order to obtain the $qq$-character of the irreducible module, we should specialize the spectral parameters $x$.
In the following Sections, we study several examples to see the role of the spectral parameters for the irreducibility from the point of view of the $qq$-character.

\subsection{Contour integral formula}\label{sec:contour_integral}

We address another description of the $qq$-characters, which plays an essential role to discuss the geometric construction discussed in \S\ref{sec:geom_const}.

Let $\imath = \sqrt{-1}$.
We may write
\begin{align}
    {:\sY_{i,x} \sA_{i,x;d_i,1}^{-1}:} = \frac{1-q_1^{d_i}q_2}{(1-q_1^{d_i})(1-q_2)} \oint \sA_{i,z}^{-1} \sY_{i,x} \frac{\dd{z}}{2 \pi \imath z}
\end{align}
where the integration contour encircles the pole at $z = x q_1^{d_i} q_2$ of the $\scS$-function appearing from the operator product of $\sY$ and $\sA$.
Hence, the operator defined by
\begin{align}
 \mathsf{R}_i = \frac{1-q_1^{d_i}q_2}{(1-q_1^{d_i})(1-q_2)} \oint \sA_{i,z}^{-1} \frac{\dd{z}}{2 \pi \imath z} \label{eq:A_screening_ch} 
\end{align}
generates the iWeyl reflection, $\mathsf{R}_i : \sY_{i,x} \longmapsto {:\sY_{i,x} \sA_{i,x;d_i,1}^{-1}:}$.

In general, we have the following.
\begin{proposition}
Fix $v \in \{0,1,\ldots,k\}$.
The factor appearing in Proposition~\ref{prop:multi_iWeyl} has the following contour integral expression,
\begin{align}
    \sum_{\substack{I \sqcup J = \{1,\ldots,k\}\\|J| = v}} \prod_{\alpha \in I, \beta \in J} \scS_{d_i}\left(\frac{x_\alpha}{x_\beta}\right) {:\prod_{\alpha=1}^k \sY_{i,x_\alpha} \prod_{\beta \in J} \sA_{i,x_\beta;d_i,1}^{-1}:} 
    & = \frac{1}{v!} \left( \frac{1-q_1^{d_i}q_2}{(1-q_1^{d_i})(1-q_2)} \right)^{v} \oint \prod_{I=1}^v \sA_{i,z_I}^{-1} :\prod_{\alpha=1}^k \sY_{i,x_\alpha}: [dz] \, ,
\end{align}    
where we write
\begin{align}
    [\dd{z}] = \prod_{I = 1}^v \frac{\dd{z}_I}{2 \pi \imath z_I} \, ,
\end{align}
and the integration contour encircles the poles at $\{x_\alpha q_1^{d_i} q_2\}_{\alpha = 1, \ldots,k}$.
%It is also formally written as $\frac{1}{v!} \mathsf{R}_i^v :\prod_{\alpha=1}^k \sY_{i,x_\alpha}:$.
\end{proposition}
\begin{proof}
    From Definition~\ref{def:YA_op}, we have the operator product factor for the $\sA$-operators,
    \begin{align}
        \sA_{i,z} \sA_{j,z'} = {:\sA_{i,z} \sA_{i,z'}:}
        \times
        \begin{cases}
            \displaystyle
            \scS_{d_i}\left(\frac{z'}{z}\right)^{-1} \scS_{d_i}\left(\frac{z}{z'}\right)^{-1} & (i=j) \\
            \displaystyle \prod_{r=0,\ldots,d_j/d_{ij}-1}
            \scS_{d_i}\left(\nu_e q_1^{rd_{ij}}\frac{z'}{z}\right) & (e: i \to j) \\
            \displaystyle
            \prod_{r=0,\ldots,d_j/d_{ij}-1}
            \scS_{d_i}\left(\nu_e^{-1}  q_1^{rd_{ij}+1}q_2\frac{z'}{z}\right) & (e: j \to i) \\
            1 & (\text{otherwise})
        \end{cases}
    \end{align}
    Based on these factors, we write the contour integral of the $v$-variable sector as follows,
    \begin{align}
        \frac{1}{v!} \oint \prod_{I=1}^v \sA_{i,z_I}^{-1} :\prod_{\alpha=1}^k \sY_{i,x_\alpha}: [dz] 
        & = \frac{1}{v!} \oint :\prod_{\alpha=1}^k \sY_{i,x_\alpha} \prod_{I=1}^v \sA_{i,z_I}^{-1}: \prod_{\substack{\alpha = 1,\ldots,k \\ I = 1,\ldots,v}} \scS_{d_i} \left(\frac{z_I}{x_\alpha}\right) \prod_{1 \le I \neq J \le v} \scS_{d_i}\left(\frac{z_J}{z_I}\right)^{-1} [dz] \, .
    \end{align}
    Evaluating the residue of the poles at $\{x_\alpha q_1^{d_i} q_2\}_{\alpha = 1, \ldots,k}$, we obtain the result.
    This integral is identically zero when $v > k$ since there is no corresponding pole.
\end{proof}

\begin{theorem}\label{thm:qq-ch_int1}
    We assume that there exist no loop edge in the quiver.
    For a dimension vector $w = (w_i)_{i \in \Gamma_0}$ and spectral parameters $x = (x_{i,\alpha})_{i \in \Gamma_0,\alpha = 1, \ldots,w_i}$, we set the highest-weight monomial, ${:\sY_{w,x}:} = {:\prod_{i \in \Gamma_0} \prod_{\alpha = 1}^{w_i} \sY_{i,x_{i,\alpha}} :}$.
    We have
    \begin{align}
        \sT_{w,x}^{(q_1,q_2)} = \sum_{v} \sT_{w,v,x}^{(q_1,q_2)} \, ,
    \end{align}
    where $v = (v_i)_{i \in \Gamma_0} \in \mathbb{Z}_{\ge 0}^{\operatorname{rk} \Gamma}$.
    Each contribution is given by a multi-variable contour integral,
    \begin{align}
        \sT_{w,v,x}^{(q_1,q_2)} & = \frac{\mathsf{c}_{v}}{v!} \oint \sA_{v,\underline{z}}^{-1} {:\sY_{w,x}:} [\dd{\underline{z}}] \, , \qquad 
        \sA_{v,\underline{z}}^{-1} = \prod_{i \in \Gamma_0} \prod_{I = 1}^{v_i} \sA_{i,z_I}^{-1} \, ,
    \end{align}
    where
    \begin{align}
        \mathsf{c}_{v} = \prod_{i \in \Gamma_0} \left(\frac{1-q_1^{-d_i}q_{2}^{-1}}{(1-q_{1}^{-d_i})(1-q_{2}^{-1})}\right)^{v_i} \, , \qquad 
%        |v| = \sum_{i \in \Gamma_0} v_i \, , \qquad 
        v! = \prod_{i \in \Gamma_0} v_i! \, , \qquad 
        [\dd{\underline{z}}] = \prod_{i \in \Gamma_0} \prod_{I=1}^{v_i} \frac{\dd{z}_{i,I}}{2 \pi \iota z_{i,I}} \, .
        \label{eq:int_notation}
    \end{align}
    The integration contour encircles the poles of the operator product factors to be consistent with the iWeyl reflection: We first take poles at $\{x_{i,\alpha} q_1^{d_i} q_2\}$ for $z_{i,I}$ and then take poles associated with the $\sY$-operators generated by the iWeyl reflections.
 Moreover, using the operator~\eqref{eq:A_screening_ch}, we also have the following formal expression~\cite{Kimura:2019hnw},
\begin{align}
 \sT_{w,v,x}^{(q_1,q_2)} & = \sum_{v} \frac{1}{v!} \left(\prod_{i \in \Gamma_0} \mathsf{R}_i^{v_i}\right) {:\sY_{w,x}:} = \exp\left(\sum_{i\in\Gamma_0} \mathsf{R}_i\right) {:\sY_{w,x}:} \, .
\end{align}
\end{theorem}

\begin{proof}
    Under the assumption that there exist no loop edge in the quiver, using the operator product factors of $\sY$ and $\sA$, we have
    \begin{align}
    & \sT_{w,v,x}^{(q_1,q_2)} \nonumber \\
    & = 
    \frac{\mathsf{c}_{v}}{v!} \oint :
    \frac{\sY_{w,x}}{\sA_{v,\underline{z}}} :
    \prod_{i \in \Gamma_0} \left[ \prod_{\substack{\alpha = 1, \ldots, w_i \\ I = 1,\ldots, v_i}} \scS_{d_i}\left(\frac{z_{i,I}}{x_{i,\alpha}}\right) \prod_{1 \le I \neq J \le v_i} \scS_{d_i}\left(\frac{z_{i,J}}{z_{i,I}}\right)^{-1} \right]
    \prod_{e:i \to j} \prod_{\substack{I = 1,\ldots,v_i \\ J = 1,\ldots, v_j \\ r = 0, \ldots, d_j/d_{ij}-1}} \scS_{d_i}\left(\nu_e q_1^{r d_{ij}} \frac{z_{j,J}}{z_{i,I}}\right) [\dd{\underline{z}}] \, .
\end{align}
We first take the poles of $\scS_{d_i}(z_{i,I}/x_{i,\alpha})$ which yields the iWeyl reflection for the $\sY$-operators in the highest-weight monomial.
Once the variable $z_{i,I}$ takes a pole, we then iteratively take the pole of $\scS_{d_i}(\nu_e q_1^{r d_{ij}} z_{j,J}/z_{i,I})$ to be consistent with the iWeyl reflection.
We do not take a pole of $\scS_{d_i}(z_{j,J}/z_{i,I})^{-1}$ for a quiver without any loop edge.
\end{proof}

In fact, the choice of the integration contour mentioned above is consistent with the Jeffrey--Kirwan residue prescription~\cite{Jeffrey:1993cun} under a small modification of the $\mathscr{S}$-function.
Such a contour integral formula based on the vertex operator formalism is a consequence of the so-called \emph{BPS/CFT correspondence}~\cite{Nekrasov:2015wsu,Nekrasov:2016qym,Nekrasov:2016ydq,Nekrasov:2017rqy,Nekrasov:2017gzb}.
See \cite{Kimura:2019hnw,Kimura:2022zsm,Kimura:2023bxy,Kimura:2024xpr,Kimura:2024osv,Kimura:2025lfo} for more details about the BPS/CFT correspondence for the $qq$-character.

%\if0

\section{$A_1$ quiver}\label{sec:A1}

The $A_1$ quiver consists of a single node and no edge: $\Gamma_0 = (1)$, $\Gamma_1 = \varnothing$.
We write $\sY_{1,x} = \sY_{x}$ for simplicity.
The iWeyl reflection of $A_1$ quiver is given by
\begin{align}
    \text{iWeyl} : \quad
    \sY_x \ \longmapsto \ \sY_{xq_1 q_2}^{-1} \, .
\end{align}
The fundamental $qq$-character is then given by
\begin{align}
    \sT_{1,x} = \sY_x + \sY_{xq_1 q_2}^{-1} \, .
\end{align}
\begin{remark}
    In the following Sections, we apply the following notations whenever no confusion arises:
    \begin{itemize}
        \item We write $q = q_1 q_2$.
        \item We do not write the $q_1$, $q_2$ dependence of the $qq$-character: $\sT_{w,x} = \sT_{w,x}^{(q_1,q_2)}$.
        \item We do not write the normal ordering symbol: $\sY \cdots \sY = {:\sY \cdots \sY:}$.
    \end{itemize}
    %\rem{To be updated: as it is identified with the quantum parameter of $U_q(\widehat{\mathfrak{sl}}_2)$}
\end{remark}
From the direct computation based on the algorithm shown in \S\ref{sec:iWeyl_ref}, we obtain the following expression.
\begin{proposition}\label{prop:A1_weight_gen}
For $A_1$ quiver, the $qq$-character of weight $w = (w)$ with generic parameters $x = (x_1,\ldots,x_w)$ is given by 
\begin{align}
    \sT_{w,x} 
    & = \sY_{x_1} \cdots \sY_{x_w} + \cdots
    % \nonumber \\ &
    = 
    \sum_{I \sqcup J = \{1,\ldots,w\}} \prod_{i \in I,j \in J} \scS\qty(\frac{x_i}{x_j}) \prod_{i \in I} \sY_{x_i} \prod_{j \in J} \sY_{x_j q}^{-1} \, ,
    \label{eq:A1_weight_gen}
\end{align}
which contains $2^w$ monomials in total.    
\end{proposition}
We remark that the $\scS$-factor appearing as the coefficient of the monomial in this formula is identified with the fixed point contribution of the equivariant integral over the quiver variety of $A_1$, identified with the cotangent bundle of the Grassmannian $T^\vee\mathrm{Gr}(v,w)$ with $v=|J|$ and ${w \choose v}$ fixed points.

\subsection{Weight two}

From the formula \eqref{eq:A1_weight_gen}, the $qq$-character of weight two $w = (2)$ with the spectral parameters $x = (x_1,x_2)$ is given by
\begin{align}
    \sT_{2,x} = \sY_{x_1} \sY_{x_2} + \scS\qty(\frac{x_2}{x_1}) \frac{\sY_{x_2}}{\sY_{x_1 q}} + \scS\qty(\frac{x_1}{x_2}) \frac{\sY_{x_1}}{\sY_{x_2q}} + \frac{1}{\sY_{x_1 q} \sY_{x_2 q}}
    \, .
    \label{eq:A1_T2}
\end{align}
For generic spectral parameters, this $qq$-character contains four monomials, corresponding to the tensor product of two two-dimensional modules of the quantum affine algebra $U_q(\widehat{\mathfrak{sl}}_2)$ associated with $A_1$ quiver.
As mentioned in \S\ref{sec:intro_irr}, we obtain non-highest dominant monomial ``1'' by specialization $x_1 / x_2 = q^{\pm 1}$, which appears at the pole of the $\scS$-function.
We see that the residue of the pole $x_1 = x_2$ is zero.
Hence, the $qq$-character has a unique dominant monomial, so that it is irreducible for any $(x_1,x_2)$ except on the singular loci.

Recalling that the $\scS$-function has zeros at $q_1$ and $q_2$, we may truncate the $qq$-character, which corresponds to the irreducible module of weight two, by specializing the spectral parameters,
\begin{align}
    \sT_{2,x} \ \longrightarrow \
    \widetilde{\sT}_{2,x}^{(q_1, q_2)} = 
    \begin{cases}
    \displaystyle 
    \sY_{x} \sY_{x;1,0} + \scS(q_1^{-1}) \frac{\sY_x}{\sY_{x;2,1}} + \sY_{x;1,1}^{-1} \sY_{x;2,1}^{-1} & (x_1 = x, x_2/x_1 = q_1)
    \\[1em] \displaystyle
    \sY_{x} \sY_{x;0,1} + \scS(q_2^{-1}) \frac{\sY_x}{\sY_{x;1,2}} + \sY_{x;1,1}^{-1} \sY_{x;1,2}^{-1} & (x_1 = x, x_2/x_1 = q_2)
    \end{cases}
    % \, .
    \label{eq:A1_T2_red}
\end{align}
%\rem{Geometric definition}
This module corresponds to the following choice of the Drinfeld polynomial,
\begin{align}
\mathsf{P}(u) = (1-ux)(1-uxq_1) = \operatorname{ch} \wedge_u \mathbf{W}^{(1)}_{2_1,x} \, ,\qquad \mathsf{P}(u) = (1-ux)(1-uxq_2) = \operatorname{ch} \wedge_u \mathbf{W}^{(1)}_{2_2,x} \, .
\end{align}
where we define the vector spaces $\left(\mathbf{W}_{k_m,x}^{(i)}\right)_{i \in \Gamma_0}$ having the Chern character
\begin{align}
    \operatorname{ch} \mathbf{W}_{k_m,x}^{(i)} = 
    \begin{cases}
        x + x q_1^{d_i} + \cdots + x q_1^{d_i(k_1-1)} & (m = 1) \\
        x + x q_2 + \cdots + x q_2^{k_2-1} & (m = 2) 
    \end{cases}
\end{align}
See \S\ref{sec:notation_geom} for the notation.
\if0
The highest-weight monomials $(\sY_x \sY_{xq_m})_{m=1,2}$ are given by
\begin{align}
    \sY_x \sY_{x;1,0} = \sY_x \sY_{xq_1} = \mathbb{I}\qty[ \mathbf{W}^{(1)}_{2_1,x} \otimes \mathbf{Y}_1]
    \, , \qquad
    \sY_x \sY_{x;0,1} = \sY_x \sY_{xq_2} = \mathbb{I}\qty[ \mathbf{W}^{(1)}_{2_2,x} \otimes \mathbf{Y}_1 ]
    \, ,
\end{align}
where we define the vector spaces $\left(\mathbf{W}_{k_m,x}^{(i)}\right)_{i \in \Gamma_0}$ having the Chern character
\begin{align}
    \operatorname{ch} \mathbf{W}_{k_m,x}^{(i)} = x + x q_m + \cdots + x q_m^{k_m-1}
    \qquad (m = 1,2) \, .
\end{align}
\fi
Namely, this specialization \eqref{eq:A1_T2_red} corresponds to the KR module of weight two with the shift parameter $q_1, q_2$.
We also denote the corresponding $qq$-character by $\widetilde{\sT}_{2,x}^{(q_1, q_2)} = \sT[\mathbf{W}_{2_{m},x}^{(1)}]$ for $m = 1, 2$.

We remark a geometric interpretation of the truncation of the $qq$-character.
Before the truncation, there are four monomials in the expression \eqref{eq:A1_T2}.
The first and the last ones have no specific geometric structure as the corresponding quiver variety is just a point.
The second and the third ones correspond to the fixed point contributions of the cotangent bundle $T^\vee\mathbb{P}^1$, which is the quiver variety of $(v,w) = (1,2)$.
Tuning the spectral parameters, we can extract either of two monomials as the subvariety contribution.

\subsubsection*{Classical limit}

In the classical limit, the $\scS$-factor behaves as follows,
\begin{subequations}\label{eq:S_lim}
\begin{align}
    \scS(q_1^{-1}) 
    % = \frac{(1 - q_1^{-2})(1 - q_1^{-1} q_2^{-1})}{(1 - q_1^{-1})(1 - q_1^{-2} q_2^{-1})} 
    & = (1 + q_1^{-1})\frac{1 - q_1^{-1} q_2^{-1}}{1 - q_1^{-2} q_2^{-1}}
    \ \longrightarrow \
    \begin{cases}
    2 & (q_1 \to 1) \\ 1 & (q_2 \to 1)
    \end{cases} 
    \\
    \scS(q_2^{-1}) 
    % = \frac{(1 - q_1^{-2})(1 - q_1^{-1} q_2^{-1})}{(1 - q_1^{-1})(1 - q_1^{-2} q_2^{-1})} 
    & = (1 + q_2^{-1})\frac{1 - q_1^{-1} q_2^{-1}}{1 - q_1^{-1} q_2^{-2}}
    \ \longrightarrow \
    \begin{cases}
    1 & (q_1 \to 1) \\ 2 & (q_2 \to 1)
    \end{cases}
\end{align}
\end{subequations}
Hence, the $qq$-character of weight two \eqref{eq:A1_T2_red} is reduced as
\begin{subequations}
\begin{align}
    \sY_{x} \sY_{x;1,0} + \scS(q_1^{-1}) \frac{\sY_x}{\sY_{x;2,1}} + \sY_{x;1,1}^{-1} \sY_{x;2,1}^{-1}
    \ \longrightarrow \
    \begin{cases}
    \displaystyle
    \sY_{x}^2 + 2 \frac{\sY_x}{\sY_{x;0,1}} + \sY_{x;0,1}^{-2} = \qty( \sY_x + \sY_{x;0,1}^{-1} )^2  & (q_1 \to 1)
    \\[1em] \displaystyle
    \sY_{x} \sY_{x;1,0} + \frac{\sY_x}{\sY_{x;2,0}} + \sY_{x;1,0}^{-1} \sY_{x;2,0}^{-1}  & (q_2 \to 1)
    \end{cases}
    \\
    \sY_{x} \sY_{x;0,1} + \scS(q_2^{-1}) \frac{\sY_x}{\sY_{x;1,2}} + \sY_{x;1,1}^{-1} \sY_{x;1,2}^{-1}
    \ \longrightarrow \
    \begin{cases}
    \displaystyle
    \sY_{x} \sY_{x;0,1} + \frac{\sY_x}{\sY_{x;0,2}} + \sY_{x;0,1}^{-1} \sY_{x;0,2}^{-1}  & (q_1 \to 1)
    \\[1em] \displaystyle
    \sY_{x}^2 + 2 \frac{\sY_x}{\sY_{x;1,0}} + \sY_{x;1,0}^{-2} = \qty( \sY_x + \sY_{x;1,0}^{-1} )^2  & (q_2 \to 1)
    \end{cases}
\end{align}
\end{subequations}
Therefore, we obtain the $q$-character associated with the KR module $\mathbf{W}_{k_1,x}^{(1)}$ ($\mathbf{W}_{k_2,x}^{(1)}$, resp.) in the limit $q_2 \to 1$ ($q_1 \to 1$, resp.), while the degenerated $q$-character, corresponding to the tensor product of the two-dimensional modules, is obtained in the limit $q_1 \to 1$ ($q_2 \to 1$, resp.).
We emphasize that the $q$-character is commutative (Remark~\ref{rmk:q-ch_commute}).
Such a degenerated situation (folded $q$-character) has been recently discussed in the literature~\cite{Chen:2018ntf,Frenkel:2021bmx,Kashiwara:2022agj}.

\subsection{Weight three}

The $qq$-character of weight three with generic spectral parameters $x = (x_1,x_2,x_3)$ is given by
\begin{align}
    \sT_{3,x} 
    & = \sY_{x_1} \sY_{x_2} \sY_{x_3} 
    \nonumber \\
    & \quad
    + \scS\qty(\frac{x_1}{x_3}) \scS\qty(\frac{x_2}{x_3}) \frac{\sY_{x_1} \sY_{x_2}}{\sY_{x_3 q}}
    + \scS\qty(\frac{x_1}{x_2}) \scS\qty(\frac{x_3}{x_2}) \frac{\sY_{x_1} \sY_{x_3}}{\sY_{x_2 q}}
    + \scS\qty(\frac{x_2}{x_1}) \scS\qty(\frac{x_3}{x_1}) \frac{\sY_{x_2} \sY_{x_3}}{\sY_{x_1 q}}
    \nonumber \\
    & \quad\quad
    + \scS\qty(\frac{x_1}{x_2}) \scS\qty(\frac{x_1}{x_3}) \frac{\sY_{x_1}} {\sY_{x_2q} \sY_{x_3 q}}
    + \scS\qty(\frac{x_2}{x_1}) \scS\qty(\frac{x_2}{x_3}) \frac{\sY_{x_2}} {\sY_{x_1 q} \sY_{x_3 q}}
    + \scS\qty(\frac{x_3}{x_1}) \scS\qty(\frac{x_3}{x_2}) \frac{\sY_{x_3}} {\sY_{x_1 } \sY_{x_2 q}}
    \nonumber \\
    & \quad\quad\quad
    + \frac{1}{\sY_{x_1 q} \sY_{x_2 q} \sY_{x_3 q}}    
    \, .
\end{align}
We see that the non-highest dominant monomial appears at the residue of the poles.
For example, the residue of the pole at $x_2 = x_1 q$, where the monomial $\sY_{x_2} \sY_{x_3} / \sY_{x_1 q}$ becomes dominant, is given by 
\begin{align}
    \operatorname*{Res}_{x_2 = x_1 q} \sT_{3,x} & = \operatorname*{Res}_{x_2 = x_1 q} \scS\left(\frac{x_2}{x_1}\right) \left( \scS\left(\frac{x_3}{x_1}\right) \frac{\sY_{x_2}\sY_{x_3}}{\sY_{x_1 q}} + \scS\left(\frac{x_2}{x_3}\right) \frac{\sY_{x_2}}{\sY_{x_1 q}\sY_{x_3 q}} \right)
    \nonumber \\
    & = - \frac{(1-q_1)(1-q_2)}{1-q} \scS\left(\frac{x_3}{x_1}\right) \left( \sY_{x_3} + \sY_{x_3 q}^{-1} \right) \, ,
\end{align}
which is proportional to the fundamental $qq$-character.
On the other hand, the residue of the pole at $x_2 = x_1$ turns out to be zero, which does not provide any dominant monomial.
Hence, the $qq$-character is irreducible for any spectral parameters except on the singular loci.

Specializing the parameters $x = (x,xq_1,xq_1^2)$, corresponding to the KR module of weight three $\mathbf{W}_{3_1,x}^{(1)}$, we obtain
\begin{align}
    \sT_{3,x}
    \ \longrightarrow \
    \sT[\mathbf{W}_{3_1,x}^{(1)}]
    = \sY_{x} \sY_{x;1,0} \sY_{x;2,0}
    + \scS_2(q_1^{-1}) \qty(\frac{\sY_x \sY_{x;1,0}}{\sY_{x;3,1}} + \frac{\sY_x}{\sY_{x;2,1} \sY_{x;3,1}} )
    + \frac{1}{\sY_{x;1,1} \sY_{x;2,1} \sY_{x;3,1} }
    \, .
\end{align}
This corresponds to the irreducible four-dimensional module of $U_q(\widehat{\mathfrak{sl}}_2)$ associated with $A_1$ quiver.
We obtain a similar expression by another specialization $x = (x,xq_2,xq_2^2)$.

\subsubsection*{Classical limit}
In the classical limit, the $\scS$-factor is given by
\begin{align}
    \scS_2(q_1^{-1}) = (1 + q_1^{-1} + q_1^{-2}) \frac{1 - q_1^{-1} q_2^{-1}}{1 - q_1^{-3} q_2^{-1}}
    \ \longrightarrow \
    \begin{cases}
     3 & (q_1 \to 1) \\ 1 & (q_2 \to 1)
    \end{cases}
    \label{eq:S_lim2}
\end{align}
Hence, the $qq$-character is reduced to the degenerated $q$-character $(q_1 \to 1)$ and the ordinary $q$-character of weight three $(q_2 \to 1)$ as follows,
\begin{align}
    \sT[\mathbf{W}_{3_1,x}^{(1)}]
    \ \longrightarrow \
    \begin{cases}
    \displaystyle
     \sY_x^3 + \frac{3 \sY_x^2}{\sY_{x;0,1}} + \frac{3 \sY_x}{\sY_{x;0,1}^2} + \frac{1}{\sY_{x;0,1}^3} = \qty(\sY_x + \sY_{x;0,1}^{-1})^3 & (q_1 \to 1) 
     \\[1em] \displaystyle
     \sY_{x} \sY_{x;1,0} \sY_{x,2,0}
    + \frac{\sY_x \sY_{x;1,0}}{\sY_{x;3,0}} + \frac{\sY_x}{\sY_{x;2,0} \sY_{x;3,0}} 
    + \frac{1}{\sY_{x;1,0} \sY_{x;2,0} \sY_{x;3,0} } & (q_2 \to 1)
    \end{cases}
\end{align}

\subsection{General weight}

We consider the $qq$-character of $A_1$ quiver for general weight.
We have the following result for the residue structure of the $qq$-character.
\begin{proposition}\label{prop:residue_A1}
    Let $\sT_{w,x}$ be the $qq$-character of $A_1$ quiver of weight $w$ with spectral parameters $x = (x_1,\ldots,x_w)$.
    For $i, j \in \{1,\ldots,w\}$ such that $i \neq j$, set $x_{i,j} = x\backslash\{x_i,x_j\}$.
    We have
    \begin{align}
        \operatorname*{Res}_{x_i = x_j} \sT_{w,x} = 0 \, , \qquad 
        \operatorname*{Res}_{x_i = x_j q} \sT_{w,x} = - \frac{(1-q_1)(1-q_2)}{1-q} \left( \prod_{k\in\{1,\ldots,w\}\backslash\{i,j\}} \scS\left(\frac{x_k}{x_j}\right) \right) \sT_{w-2,x_{ij}} \, .
    \end{align}
\end{proposition}
\begin{proof}
    We fix $i, j \in \{1,\ldots,w\}$ such that $i \neq j$.
    By Proposition~\ref{prop:A1_weight_gen}, the contributions to the residue of the pole at $x_i = x_j$ come from the factor with $\scS(x_i/x_j)$ and $\scS(x_j/x_i)$.
    Hence, we have
    \begin{align}
        \operatorname*{Res}_{x_i = x_j} \sT_{w,x} & = \operatorname*{Res}_{x_i = x_j} \left(\scS\qty(\frac{x_i}{x_j}) \frac{\sY_{x_i}}{\sY_{x_j}} + \scS\qty(\frac{x_{j}}{x_{i}}) \frac{\sY_{x_j}}{\sY_{x_i}}\right) \nonumber \\
        & \quad \times \sum_{I \sqcup J = \{1,\ldots,w\}\backslash\{i,j\}} \prod_{i' \in I} \scS\qty(\frac{x_{i'}}{x_{j}}) \prod_{j' \in J} \scS\qty(\frac{x_{i}}{x_{j'}}) \prod_{i' \in I,j' \in J} \scS\qty(\frac{x_{i'}}{x_{j'}}) \prod_{i' \in I} \sY_{x_{i'}} \prod_{j' \in J} \sY_{x_{j'} q}^{-1}
        \nonumber \\
        & = 0 \, .
    \end{align}
    For the pole at $x_i = x_j q$, we take into account the factor with $\scS(x_i/x_j)$, which is given by
    \begin{align}
        \operatorname*{Res}_{x_i = x_j q} \sT_{w,x} & = \operatorname*{Res}_{x_i = x_j q} \scS\left(\frac{x_i}{x_j}\right) \frac{\sY_{x_i}}{\sY_{x_j q}} \sum_{I \sqcup J = \{1,\ldots,w\}\backslash\{i,j\}} \prod_{i' \in I} \scS\qty(\frac{x_{i'}}{x_{j}}) \prod_{j' \in J} \scS\qty(\frac{x_{i}}{x_{j'}}) \prod_{i' \in I,j' \in J} \scS\qty(\frac{x_{i'}}{x_{j'}}) \prod_{i' \in I} \sY_{x_{i'}} \prod_{j' \in J} \sY_{x_{j'} q}^{-1} \nonumber \\
        & = - \frac{(1-q_1)(1-q_2)}{1-q} \left( \prod_{k\in\{1,\ldots,w\}\backslash\{i,j\}} \scS\left(\frac{x_k}{x_j}\right) \right) \sT_{w-2,x_{ij}} \, ,
    \end{align}
    where we use the inversion relation~\eqref{eq:S_reflection}.
\end{proof}
This means that the $qq$-character is irreducible for any spectral parameters except on the singular loci, and each pole is associated with a non-highest $qq$-character, which proves Conjecture~\ref{conj:irr} for $A_1$ quiver.

\begin{theorem}\label{thm:A_1_I}
Under the specialization $x = (x,xq_1,\ldots,xq_1^{w-1})$ corresponding to the KR module $\mathbf{W}_{w_1,x}^{(1)}$ of degree $w$, the $qq$-character consists of $w+1$ monomials,
\begin{align}
    \sT[\mathbf{W}_{w_1,x}^{(1)}] = \sum_{v=0}^w \qty( \prod_{\substack{i=1,\ldots,w-v \\ j = 1,\ldots,v}} \scS(q_1^{1-i-j}) )
    \prod_{i=1,\ldots,w-v} \sY_{x;i-1,0} \prod_{j=w-v+1,\ldots,w} \sY_{x;j,1}^{-1}
    \, .
    \label{eq:A1_Tw_red}
\end{align}
\end{theorem}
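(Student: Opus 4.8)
The plan is to begin from the closed form \eqref{eq:A1_weight_gen}, valid for generic (in particular pairwise distinct) parameters, and to track which of its $2^w$ monomials survive the specialization $x_k = x q_1^{k-1}$. Since $q_1$ is generic these specialized points stay pairwise distinct, so no collision of $\sY$-functions occurs and the formula applies by direct substitution. Under this substitution every ratio in the coefficients becomes a pure power of $q_1$, namely $x_i/x_j = q_1^{i-j}$, so the monomial indexed by a partition $\{1,\ldots,w\} = I \sqcup J$ is weighted by $\prod_{i \in I, j \in J}\scS(q_1^{i-j})$. The whole argument then reduces to the zeros of $\scS$ at integer powers of $q_1$.

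First I would record the elementary fact that, for generic $q_{1,2}$, the quantity $\scS(q_1^n)$ is finite and nonzero for every integer $n$ with $-(w-1) \le n \le w-1$ except $n = 1$, where it vanishes. Indeed, from \eqref{eq:S-fn} the poles of $\scS$ sit at $z = 1$ and $z = q$ and the zeros at $z = q_1$ and $z = q_2$; for $z = q_1^n$ with $n \neq 0$ the pole value $1$ is avoided, while $q_1^n \in \{q_2,\, q_1 q_2\}$ would force a nongeneric relation between $q_1$ and $q_2$. Since $I \cap J = \emptyset$ always gives $i \neq j$, no factor is ever a pole, and a factor vanishes precisely when $i - j = 1$. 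Consequently the coefficient of the monomial indexed by $(I,J)$ is a genuine (finite) zero as soon as there exist $i \in I$ and $j = i-1 \in J$, and is nonzero otherwise.

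The combinatorial heart is then to identify the surviving partitions: $(I,J)$ contributes iff no $i \in I$ has $i-1 \in J$, equivalently every $i \in I$ with $i \ge 2$ satisfies $i-1 \in I$. Taking $k = \max I$ and descending one step at a time forces $I = \{1,\ldots,k\}$, so the surviving data are exactly the initial segments $I = \{1,\ldots,w-v\}$, $J = \{w-v+1,\ldots,w\}$ for $v = 0,\ldots,w$, giving the claimed $w+1$ monomials. For such a pair, $\prod_{i \in I}\sY_{x_i} = \prod_{i=1}^{w-v}\sY_{x;i-1,0}$ and $\prod_{j \in J}\sY_{x_j q}^{-1} = \prod_{j=w-v+1}^{w}\sY_{x;j,1}^{-1}$, matching the monomials in \eqref{eq:A1_Tw_red}.

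It remains to reconcile the coefficient $\prod_{a=1}^{w-v}\prod_{b=w-v+1}^{w}\scS(q_1^{a-b})$ with $\prod_{i=1}^{w-v}\prod_{j=1}^{v}\scS(q_1^{1-i-j})$. Reindexing by $a = w-v+1-i$ and $b = w-v+j$ gives $a-b = 1-i-j$ together with the correct index ranges, so the two products agree term by term. The step I expect to require the most care is the genericity bookkeeping of the second paragraph: one must ensure that a vanishing $\scS$-factor is never compensated by a pole elsewhere in the same coefficient, so that degenerate terms genuinely drop out rather than producing an indeterminate $0 \cdot \infty$, and that none of the surviving coefficients accidentally vanishes. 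Both hold once $q_1, q_2$ are taken generic, in particular avoiding the relations $q_1^n = q_2$ and $q_1^n = q_1 q_2$ for the finitely many relevant $n$.
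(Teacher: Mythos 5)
Your proposal is correct and follows essentially the same route as the paper: specialize the closed formula \eqref{eq:A1_weight_gen}, observe that the zeros of the $\scS$-factors at $z=q_1$ kill every partition $I\sqcup J$ except the initial segments $I=\{1,\ldots,w-v\}$, $J=\{w-v+1,\ldots,w\}$, and reindex the surviving coefficient. The paper states this in one sentence; you have merely supplied the genericity bookkeeping (no poles, no $0\cdot\infty$) and the combinatorial identification of the surviving partitions, both of which check out.
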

\begin{proof}
    Under the specialization $x = (x,xq_1,\ldots,xq_1^{w-1})$, due to the $\mathscr{S}$-factors, there are only the contributions of $I = \{1,\ldots,w-v\}$, $J = \{w-v+1,\ldots,w\}$ remaining in the $qq$-character formula \eqref{eq:A1_weight_gen}, which gives rise to the expression above.
\end{proof}
This proves the part (\ref{conj1}) of Conjecture~\ref{conj:irreducibility} for $A_1$ quiver.

\subsubsection*{Classical limit}

Let us consider the classical limit of the $qq$-character.
We have the following results in the limits $q_1 \to 1$ and $q_2 \to 1$, respectively.
\begin{theorem}\label{thm:A_1_II}
The classical limit of the $qq$-character gives rise to the degenerated $q$-character and the weight $w$ $q$-character,
\begin{align}
    \sT[\mathbf{W}_{w_1,x}^{(1)}]
    \ \longrightarrow \
    \begin{cases}
    \displaystyle
    \sum_{v=0}^w { w \choose v}
    \qty(\sY_{x})^{w-v} \qty(\sY_{x;0,1})^{-v} = \qty(\sY_x + \sY_{x;0,1}^{-1})^w & (q_1 \to 1) 
    \\[1em] \displaystyle
    \sum_{v=0}^w 
    \prod_{i=1,\ldots,w-v} \sY_{x;i-1,0} \prod_{j=w-v+1,\ldots,w} \sY_{x;j,0}^{-1} & (q_2 \to 1) 
    \end{cases}
    \, .
\end{align}    
\end{theorem}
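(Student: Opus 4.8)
The plan is to take both classical limits termwise in the $(w+1)$-monomial formula \eqref{eq:A1_Tw_red} of Theorem~\ref{thm:A_1_I}, treating the scalar $\scS$-coefficient and the $\sY$-monomial of each $v$-th summand separately. Writing the coefficient's argument as $q_1^{1-i-j} = q_1^{-n}$ with $n = i+j-1 \ge 1$, the first task is to record the rational form \eqref{eq:S-fn} of the relevant factor,
\begin{align}
    \scS(q_1^{-n}) = \frac{(1-q_1^{-n-1})(1-q_1^{-n}/q_2)}{(1-q_1^{-n})(1-q_1^{-n-1}/q_2)}
    \, ,
\end{align}
and then evaluate it in each limit. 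Since $n \ge 1$, the argument stays away from $1$ for generic $q_m$, so no poles are encountered.

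First I would treat the limit $q_1 \to 1$ with $q_2$ generic. Here the two $q_2$-dependent factors both tend to the nonzero constant $1-q_2^{-1}$ and cancel, while $1-q_1^{-m}\sim m\,(q_1-1)$ to leading order, giving $\scS(q_1^{-n}) \to (n+1)/n$. The main step is the telescoping of the full coefficient,
\begin{align}
    \prod_{\substack{i=1,\ldots,w-v\\ j=1,\ldots,v}} \scS(q_1^{1-i-j})
    \ \xrightarrow{\ q_1 \to 1\ }\
    \prod_{\substack{i=1,\ldots,w-v\\ j=1,\ldots,v}} \frac{i+j}{i+j-1}
    \, .
\end{align}
Telescoping first in $i$ for each fixed $j$ gives $\prod_{i=1}^{w-v}\frac{i+j}{i+j-1} = \frac{(w-v)+j}{j}$, and telescoping the remaining product over $j$ gives $\prod_{j=1}^{v}\frac{(w-v)+j}{j} = \binom{w}{v}$. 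Because $\sY_{x;i-1,0}\to\sY_x$ and $\sY_{x;j,1}\to\sY_{x;0,1}$ as $q_1\to1$, the $v$-th summand tends to $\binom{w}{v}\,\sY_x^{w-v}\,\sY_{x;0,1}^{-v}$, and summing over $v$ yields $(\sY_x+\sY_{x;0,1}^{-1})^w$ by the binomial theorem, as claimed.

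For the limit $q_2 \to 1$ with $q_1$ generic, each $\scS$-factor trivializes: the factors $1-q_1^{-n}/q_2$ and $1-q_1^{-n-1}/q_2$ tend to $1-q_1^{-n}$ and $1-q_1^{-n-1}$ and cancel against the denominator, so $\scS(q_1^{-n})\to 1$. The $\sY$-monomials then survive intact, with $\sY_{x;j,1}\to\sY_{x;j,0}$ and $\sY_{x;i-1,0}$ unaffected (it carries no $q_2$), reproducing exactly the stated weight-$w$ $q$-character. The only nonroutine ingredient is the double telescoping that produces the binomial coefficient in the $q_1 \to 1$ limit; everything else is direct substitution, so I expect no genuine obstacle beyond keeping the exponents of $q_1$ and $q_2$ consistent throughout.
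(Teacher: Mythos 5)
Your proof is correct, and it follows the same overall decomposition as the paper: both take the termwise limit of the $(w+1)$-monomial formula \eqref{eq:A1_Tw_red} from Theorem~\ref{thm:A_1_I} and reduce everything to evaluating $\lim \prod_{i,j}\scS(q_1^{1-i-j})$. Where you differ is in how that scalar limit is computed. The paper rewrites the double product in terms of $q$-Pochhammer symbols and $q$-gamma functions, obtaining the binomial coefficient from $\Gamma_{q_1^{-1}}(w)\Gamma_{q_1^{-1}}(1)/\bigl(\Gamma_{q_1^{-1}}(v)\Gamma_{q_1^{-1}}(w-v)\bigr)$ together with the surviving $q_2$-dependent Pochhammer ratio as $q_1\to1$, and triviality of each factor as $q_2\to1$. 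You instead evaluate each factor directly, $\scS(q_1^{-n})\to(n+1)/n$ as $q_1\to1$ and $\scS(q_1^{-n})\to1$ as $q_2\to1$, and obtain $\binom{w}{v}$ by double telescoping of $\prod_{i,j}\tfrac{i+j}{i+j-1}$. Your route is more elementary: it sidesteps the convergence bookkeeping of infinite products near $q_1=1$ and the cancellation of the residual $q_2$-dependent Pochhammer prefactor, at the cost of not exhibiting the $q$-deformed binomial structure that the paper's $\Gamma_q$ rewriting makes visible. Both computations agree with the low-weight checks \eqref{eq:S_lim} and \eqref{eq:S_lim2}, and the remaining substitutions $\sY_{x;i-1,0}\to\sY_x$, $\sY_{x;j,1}\to\sY_{x;0,1}$ (resp.\ $\sY_{x;j,1}\to\sY_{x;j,0}$) are handled identically.
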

\begin{proof}
Applying the expression \eqref{eq:S-fn}, we may rewrite the $\scS$-factor in the $qq$-character \eqref{eq:A1_Tw_red} as follows,
\begin{align}
    \prod_{\substack{i=1,\ldots,w-v \\ j = 1,\ldots,v}} \scS(q_1^{1-i-j})
    & = 
    \frac{(q_1^{-1} q_2^{-1};q_1^{-1})_\infty (q_1^{-w+v-1};q_1^{-1})_\infty (q_1^{-v-1};q_1^{-1})_\infty (q_1^{-w} q_2^{-1};q_1^{-1})_\infty}{(q_1^{-1};q_1^{-1})_\infty (q_1^{-w+v-1} q_2^{-1};q_1^{-1})_\infty (q_1^{-v-1} q_2^{-1};q_1^{-1})_\infty (q_1^{-w};q_1^{-1})_\infty}
    \nonumber \\
    & = 
    \frac{(q_1^{-1} q_2^{-1};q_1^{-1})_\infty (q_1^{-w} q_2^{-1};q_1^{-1})_\infty}{(q_1^{-w+v-1} q_2^{-1};q_1^{-1})_\infty (q_1^{-v-1} q_2^{-1};q_1^{-1})_\infty}
    \frac{\Gamma_{q_1^{-1}}(w) \Gamma_{q_1^{-1}}(1)}{\Gamma_{q_1^{-1}}(v) \Gamma_{q_1^{-1}}(w-v)}
    \ \longrightarrow \
    \begin{cases}
    \displaystyle
    { {w} \choose {v} } & (q_1 \to 1) \\ 1 & (q_2 \to 1)
    \end{cases}
    \, ,
\end{align}
where we define the $q$-gamma function for $|q|<1$,
\begin{align}
    \Gamma_q(x) = (1 - q)^x \frac{(q;q)_\infty}{(q^x;q)_\infty}
    \ \xrightarrow{q \to 1} \ \Gamma(x)
\end{align}
with the $q$-shifted factorial ($q$-Pochhammer symbol)
\begin{align}
    (z;q)_\infty = \prod_{n=0}^\infty (1 - z q^n)
    \, .
\end{align}    
Then, we obtain the expression above.
\end{proof}
This completes a proof of the parts (\ref{conj2}) and (\ref{conj3}) of Conjecture~\ref{conj:irreducibility} for $A_1$ quiver.

\section{$A_2$ quiver}\label{sec:A2}

In this case, there are two $\sY$-variables, and the corresponding iWeyl reflection is given as follows,
\begin{align}
    \text{iWeyl} : \quad
    \qty(\sY_{1,x}, \sY_{2,x})
    \ \longmapsto \
    \qty( \frac{\sY_{2,x}}{\sY_{1,xq}}, \frac{\sY_{1,xq}}{\sY_{2,xq}} ) 
    \, .
\end{align}
Then, the fundamental $qq$-characters are given by
\begin{subequations}
\begin{align}
    \sT_{1,x} = \sT_{(1,0),x} & = \sY_{1,x} + \frac{\sY_{2,x}}{\sY_{1,xq}} + \frac{1}{\sY_{2,xq}}
    \, , \\
    \sT_{2,x} = \sT_{(0,1),x} & = \sY_{2,x} + \frac{\sY_{1,xq}}{\sY_{2,xq}} + \frac{1}{\sY_{1,xq^2}}
    \, ,
\end{align}
\end{subequations}
which correspond to two three-dimensional modules of $U_q(\widehat{\mathfrak{sl}}_3)$ associated with $A_2$ quiver.

\subsection{Weight two}

For $A_2$ quiver, we have three possible weight two situations, $w = (2,0), (0,2)$ and $w = (1,1)$.
The former two cases correspond to the KR modules, $\qty(\mathbf{W}_{2_m,x}^{(i)})_{i=1,2,m=1,2}$, while the last one is not.

\subsubsection{$w = (2,0)$}

We consider the $qq$-character associated with weight $w = (2,0)$ with generic parameters $x = (x_1,x_2)$,
\begin{align}
    \sT_{(2,0),x} & = \sY_{1,x_1} \sY_{1,x_2} + \frac{\sY_{2,x_1} \sY_{2,x_2}}{\sY_{1,x_1 q} \sY_{1,x_2 q}} + \frac{1}{\sY_{2,x_1 q} \sY_{2,x_2 q}}
    \nonumber \\ &
    \qquad +
    \scS\qty(\frac{x_2}{x_1}) \qty[ \frac{\sY_{1,x_2} \sY_{2,x_1}}{\sY_{1,x_1 q}} + \frac{\sY_{1,x_2}}{\sY_{2,x_1 q}} + \frac{\sY_{2,x_2}}{\sY_{1,x_2 q} \sY_{2,x_1 q}} ]
    \nonumber \\ &
    \qquad \qquad +
    \scS\qty(\frac{x_1}{x_2}) \qty[ \frac{\sY_{1,x_1} \sY_{2,x_2}}{\sY_{1,x_2 q}} + \frac{\sY_{1,x_1}}{\sY_{2,x_2 q}} + \frac{\sY_{2,x_1}}{\sY_{1,x_1 q} \sY_{2,x_2 q}} ]
    \, .
    \label{eq:A2_T_20}
\end{align}
The flow of the iWeyl reflections is described by the Hasse diagram shown in Fig.~\ref{fig:Hasse_A2_20}.
The $qq$-character \eqref{eq:A2_T_20} contains 9 monomials, corresponding to the tensor product of two three-dimensional fundamental modules of $w=(1,0)$.

\begin{figure}[t]
    \begin{center}
    \begin{tikzcd}[column sep=.5cm]
    && \sY_{1,x_1} \sY_{1,x_2} \arrow[ld,"{1,x_1}"'] \arrow[rd,"{1,x_2}"] && \\
    & \displaystyle \frac{\sY_{1,x_2} \sY_{2,x_1} }{ \sY_{1,x_1 q} } \arrow[ld,"{2,x_1}"'] \arrow[rd,"{1,x_2}"] && \displaystyle \frac{\sY_{1,x_1} \sY_{2,x_2}}{\sY_{1,x_2 q}} \arrow[ld,"{1,x_1}"'] \arrow[rd,"{2,x_2}"] & \\
    \displaystyle \frac{ \sY_{1,x_2} }{ \sY_{2,x_1 q} } \arrow[rd,"{1,x_2}"] && \displaystyle \frac{\sY_{2,x_1} \sY_{2,x_2}}{\sY_{1,x_1 q} \sY_{1,x_2 q}} \arrow[ld,"{2,x_1}"'] \arrow[rd,"{2,x_2}"] && \displaystyle \frac{\sY_{1,x_1}}{\sY_{2,x_2 q}} \arrow[ld,"{1,x_1}"'] \\
    & \displaystyle \frac{ \sY_{2,x_2} }{ \sY_{1,x_2 q} \sY_{2,x_1 q} } \arrow[rd,"{2,x_2}"] && \displaystyle \frac{\sY_{2,x_1}}{\sY_{1, x_1 q} \sY_{2,x_2 q}} \arrow[ld,"{2,x_1}"'] & \\
    && \displaystyle \frac{1}{\sY_{2,x_1 q} \sY_{2,x_2 q}} &&
    \end{tikzcd}
    \end{center}
    \caption{Hasse diagram for the iWeyl reflection flow of the weight $w=(2,0)$ for $A_2$ quiver.}
    \label{fig:Hasse_A2_20}
\end{figure}
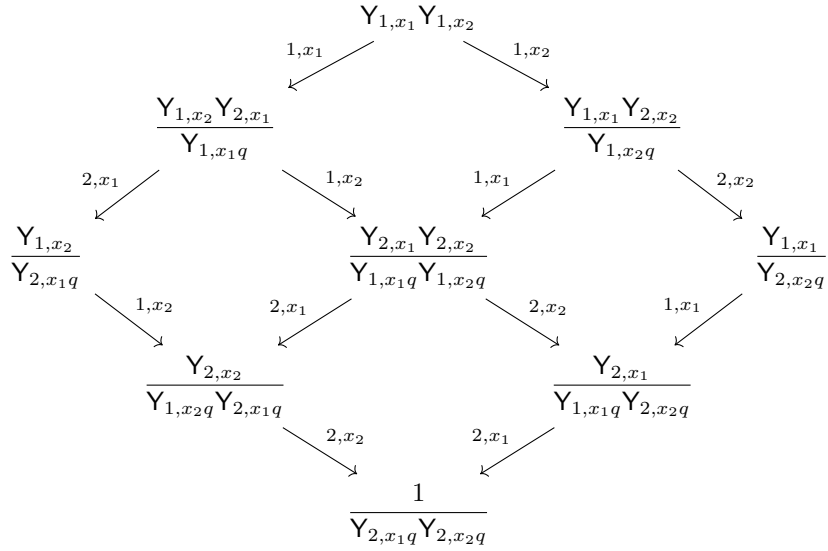

We specialize the spectral parameters $x = (x,x q_1)$, which corresponds to the weight two KR module $\mathbf{W}_{2_1,x}^{(1)}$.
Then, we have the following.
\begin{proposition}
The $qq$-character of six-dimensional module of weight two of the algebra $U_q(\widehat{\mathfrak{sl}}_3)$ is given by
\begin{align}
    \sT[\mathbf{W}_{2_1,x}^{(1)}] & = \sY_{1,x} \sY_{1,x;1,0} + \frac{\sY_{2,x} \sY_{2,x;1,0}}{\sY_{1,x;1,1} \sY_{1,x;2,1}} + \frac{1}{\sY_{2,x;1,1} \sY_{2,x;2,1}}
    \nonumber \\ & \qquad 
    + \scS(q_1^{-1}) \qty[ \frac{\sY_{1,x} \sY_{2,x;1,0}}{\sY_{1,x;2,1}} + \frac{\sY_{1,x}}{\sY_{2,x;2,1}} + \frac{\sY_{2,x}}{\sY_{1,x;1,1} \sY_{2,x;2,1}} ]
    \, .
\end{align}    
\end{proposition}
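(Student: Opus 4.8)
The plan is to obtain the six-monomial expression directly from the generic weight-$(2,0)$ character \eqref{eq:A2_T_20} by evaluating it at the specialization $\underline{x} = (x, xq_1)$, in exact parallel with Theorem~\ref{thm:A_1_I} for $A_1$. First I would set $x_1 = x$ and $x_2 = x q_1$, so that the two $\scS$-arguments become $x_2/x_1 = q_1$ and $x_1/x_2 = q_1^{-1}$. The key input is the zero structure of the $\scS$-function recorded in \S\ref{sec:S-func}: the numerator factor $(1 - z/q_1)$ forces $\scS(q_1) = 0$. Since every monomial on the second line of \eqref{eq:A2_T_20} carries the overall factor $\scS(x_2/x_1) = \scS(q_1)$, all three of them vanish under this specialization, leaving $9 - 3 = 6$ monomials.

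Next I would read off the surviving coefficients. The three monomials on the first line of \eqref{eq:A2_T_20} carry coefficient $1$ and are unaffected, while the three on the third line retain the common prefactor $\scS(x_1/x_2) = \scS(q_1^{-1})$, which is nonzero for generic $q_{1,2}$. This reproduces the claimed split into a coefficient-$1$ block and a $\scS(q_1^{-1})$-block. The remaining task is purely a bookkeeping step: rewrite each $\sY$-argument in the shorthand $\sY_{i,x;j,k} = \sY_{i,x q_1^j q_2^k}$ using $q = q_1 q_2$. For instance $x_2 = x q_1 \mapsto x;1,0$, $x_1 q = x q_1 q_2 \mapsto x;1,1$, and $x_2 q = x q_1^2 q_2 \mapsto x;2,1$, and similarly for the rest; carrying this out on all six survivors gives the stated formula. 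I would also note that no subtlety from rules (a)--(b) of \S\ref{sec:iWeyl_ref} arises, since the specialization merely evaluates the $\scS$-prefactors already present, and because $q_1 \neq 1$ no two $\sY$-functions attached to the same node collide, so no confluent (derivative) monomials are produced.

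The main obstacle is essentially nil --- it is a direct substitution --- and the only point requiring care is that the vanishing of the \emph{correct} block (the one multiplied by $\scS(q_1)$ rather than $\scS(q_1^{-1})$) is tied to the ordering choice $x_2/x_1 = q_1$. The mirror specialization $\underline{x} = (x q_1, x)$, which corresponds to descending the other branch of the Hasse diagram in Fig.~\ref{fig:Hasse_A2_20}, would instead give $\scS(x_2/x_1) = \scS(q_1^{-1})$ and $\scS(x_1/x_2) = \scS(q_1) = 0$, killing the third-line block and producing an equivalent character with the roles of the two descending chains interchanged.
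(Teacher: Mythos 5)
Your proof is correct and follows exactly the route the paper takes (the paper states the proposition immediately after announcing the specialization $\underline{x}=(x,xq_1)$ of \eqref{eq:A2_T_20}, relying on $\scS(q_1)=0$ to kill the second-line block and leaving the $\scS(q_1^{-1})$-block intact). Your substitution into the shorthand $\sY_{i,x;j,k}$ and the remark that no confluent $\sY$-factors arise are both accurate, so nothing further is needed.
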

On the other hand, the eliminated factors in the $qq$-character, corresponding to the three-dimensional module, contribute to the residue of the pole at $x_1 = x_2 q$,%
\footnote{%
The residue factor plays a crucial role in the so-called quadratic relations in the context of the deformed W-algebra.
For the $A$-type quiver, in particular, the quadratic relations~\cite{Odake:2001ad,Kojima:2020vtc} are interpreted as a double quantization of Pieri's formula for the fundamental modules.
} 
\begin{align}
    \operatorname*{Res}_{x_1=x_2 q} \sT_{(2,0),x} = - \frac{(1-q_1)(1-q_2)}{1-q} \sT_{(0,1),x_2} \, .
\end{align}
This is consistent with the decomposition of the tensor product into the irreducible modules, $3 \otimes 3 = 6 \oplus 3$.
We also remark that $\operatorname*{Res}_{x_1=x_2} \sT_{(2,0),x} = 0$.

\subsubsection*{Classical limit}
Recalling the behavior of the $\scS$-factor in the classical limit \eqref{eq:S_lim}, this $qq$-character is further reduced in the classical limit as follows,
\begin{subequations}
\begin{align}
    \sT[\mathbf{W}_{2_1,x}^{(1)}]
    \ \xrightarrow{q_1 \to 1} \
    &
    \sY_{1,x}^2 + \qty(\frac{\sY_{2,x}}{\sY_{1,x;0,1}})^2 + \sY_{2,x;0,1}^{-2} + 2 \qty(\frac{\sY_{1,x} \sY_{2,x}}{\sY_{1,x;0,1}} + \frac{ \sY_{1,x} }{ \sY_{2,x;0,1}} + \frac{\sY_{2,x}}{\sY_{1,x;0,1} \sY_{2,x;0,1}} ) 
    \nonumber \\
    & = \qty(\sY_{1,x} + \frac{\sY_{2,x}}{\sY_{1,x;0,1}} + \frac{1}{\sY_{2,x;0,1}} )^2 \, ,
    \\[.5em]
    \xrightarrow{q_2\to 1} \
    & \sY_{1,x} \sY_{1,x;1,0} + \frac{\sY_{2,x} \sY_{2,x;1,0}}{\sY_{1,x;1,0} \sY_{1,x;2,0}} + \frac{1}{\sY_{2,x;1,0} \sY_{2,x;2,0}}
    + \frac{\sY_{1,x} \sY_{2,x;1,0}}{\sY_{1,x;2,0}} + \frac{\sY_{1,x}}{\sY_{2,x;2,0}} + \frac{\sY_{2,x}}{\sY_{1,x;1,0} \sY_{2,x;2,0}} \, .
\end{align}
\end{subequations}
In the limit $q_1 \to 1$, we obtain the degenerated tensor product of $w = (1,0)$, while we obtain the $q$-character associated with the KR module $\mathbf{W}_{2_1,x}^{(1)}$ in the limit $q_2 \to 1$.

\subsubsection{$w = (0,2)$}

The $qq$-character of weight $w = (0,2)$ is similarly calculated as before.
For the spectral parameters $x = (x_1,x_2)$, we obtain
\begin{align}
    \sT_{(0,2),x} & =
    \sY_{2,x_1} \sY_{2,x_2} + \frac{\sY_{1,x_1q} \sY_{1,x_2q}}{\sY_{2,x_1q} \sY_{2,x_2q}} + \frac{1}{\sY_{1,x_1 q^2} \sY_{1,x_2 q^2}}
    \nonumber \\ & \qquad
    + \scS\qty( \frac{x_2}{x_1} ) \qty[
    \frac{\sY_{1,x_1q} \sY_{2,x_2}}{\sY_{2,x_1 q}} + \frac{\sY_{2,x_2}}{\sY_{1,x_1 q^2}} + \frac{\sY_{1,x_2q}}{\sY_{1,x_1q^2} \sY_{2,x_2q}}
    ]
    \nonumber \\ & \qquad \qquad
    + \scS\qty( \frac{x_1}{x_2} ) \qty[
    \frac{\sY_{1,x_2q} \sY_{2,x_1}}{\sY_{2,x_2 q}} + \frac{\sY_{2,x_1}}{\sY_{1,x_2 q^2}} + \frac{\sY_{1,x_1q}}{\sY_{1,x_2q^2} \sY_{2,x_1q}}
    ] \, .
\end{align}
The iWeyl reflection flow is described by the Hasse diagram in Fig.~\ref{fig:Hasse_A2_02}.

\begin{figure}[t]
    \begin{center}
    \begin{tikzcd}[column sep=.5cm]
    && \sY_{2,x_1} \sY_{2,x_2} \arrow[ld,"{2,x_1}"'] \arrow[rd,"{2,x_2}"] && \\
    & \displaystyle \frac{\sY_{1,x_1 q} \sY_{2,x_2} }{ \sY_{2,x_1 q} } \arrow[ld,"{1,x_1q}"'] \arrow[rd,"{2,x_2}"] && \displaystyle \frac{\sY_{1,x_2q} \sY_{2,x_1}}{\sY_{2,x_2 q}} \arrow[ld,"{2,x_1}"'] \arrow[rd,"{1,x_2q}"] & \\
    \displaystyle \frac{ \sY_{2,x_2} }{ \sY_{1,x_1 q^2} } \arrow[rd,"{2,x_2}"] && \displaystyle \frac{\sY_{1,x_1q} \sY_{1,x_2q}}{\sY_{2,x_1 q} \sY_{2,x_2 q}} \arrow[ld,"{1,x_1q}"'] \arrow[rd,"{1,x_2q}"] && \displaystyle \frac{\sY_{2,x_1}}{\sY_{1,x_2 q^2}} \arrow[ld,"{2,x_1}"'] \\
    & \displaystyle \frac{ \sY_{1,x_2 q} }{ \sY_{1,x_1 q^2} \sY_{2,x_2 q} } \arrow[rd,"{1,x_2 q}"] && \displaystyle \frac{\sY_{1,x_1q}}{\sY_{1, x_2 q^2} \sY_{2,x_1 q}} \arrow[ld,"{1,x_1q}"'] & \\
    && \displaystyle \frac{1}{\sY_{1,x_1 q^2} \sY_{1,x_2 q^2}} &&
    \end{tikzcd}
    \end{center}
    \caption{Hasse diagram for the iWeyl reflection flow of the weight $w=(0,2)$ for $A_2$ quiver.}
    \label{fig:Hasse_A2_02}
\end{figure}

Specializing the parameters as $x = (x,xq_1)$, we obtain the $qq$-character of the KR module $\mathbf{W}_{2_1,x}^{(2)}$ as follows.
\begin{proposition}
\begin{align}
    \sT[\mathbf{W}_{2_1,x}^{(2)}] & =
    \sY_{2,x} \sY_{2,x;1,0} + \frac{\sY_{1,x;1,1} \sY_{1,x;2,1}}{\sY_{2,x;1,1} \sY_{2,x;2,1}} + \frac{1}{\sY_{1,x;2,2} \sY_{1,x;3,2}}
    \nonumber \\ & \qquad
    + \scS(q_1^{-1}) \qty[
    \frac{\sY_{1,x;2,1} \sY_{2,x}}{\sY_{2,x;2,1}} + \frac{\sY_{2,x}}{\sY_{1,x;3,2}} + \frac{\sY_{1,x;1,1}}{\sY_{1,x;3,2} \sY_{2,x;1,1}}
    ] \, .
\end{align}    
\end{proposition}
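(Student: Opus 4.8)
The plan is to derive the formula by a direct, regular specialization of the generic weight-$(0,2)$ $qq$-character $\sT_{(0,2),\underline{x}}$ at $\underline{x} = (x, xq_1)$, mirroring the companion $w=(2,0)$ computation. First I set $x_1 = x$ and $x_2 = xq_1$, so that the two coefficient functions collapse to $\scS(x_2/x_1) = \scS(q_1)$ and $\scS(x_1/x_2) = \scS(q_1^{-1})$. The decisive input is the zero of the $\scS$-function at $z = q_1$ visible in \eqref{eq:S-fn}: because $\scS(q_1) = 0$, the entire block weighted by $\scS(x_2/x_1)$ is annihilated, whereas the block weighted by $\scS(x_1/x_2) = \scS(q_1^{-1})$ survives with the finite value recorded in \eqref{eq:S_lim}.

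Next I would translate the surviving $\sY$-arguments into the shift notation $\sY_{i,x;j,k} = \sY_{i,xq_1^j q_2^k}$ using $q = q_1 q_2$. The highest-weight pair becomes $\sY_{2,x}\sY_{2,xq_1} = \sY_{2,x}\sY_{2,x;1,0}$, and the four distinct spectral parameters $xq$, $x_2 q = xq_1 q$, $xq^2$, $x_2 q^2 = xq_1 q^2$ carry the shifts $(1,1)$, $(2,1)$, $(2,2)$, $(3,2)$ respectively. Substituting these into the three unit-coefficient monomials and into the three monomials of the $\scS(q_1^{-1})$-block reproduces the asserted expression term by term.

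I do not expect a genuine conceptual obstacle here; the work is the shift bookkeeping together with a check that the vanishing coefficient $\scS(q_1)$ multiplies exactly the three monomials it should, with no hidden compensating pole. The latter is immediate because the $\sY_{i,x}$ are formal symbols carrying no poles in the spectral parameter, so the killed block vanishes cleanly and the specialization is an ordinary substitution (no coincident same-node arguments that would invoke the limiting rule of \S\ref{sec:iWeyl_ref}). As a final consistency check I would verify the monomial count $9 = 6 + 3$: the zero of $\scS(q_1)$ deletes one three-monomial block from the tensor-product character, leaving the six monomials of the KR module $\mathbf{W}_{2_1,x}^{(2)}$, in agreement with the decomposition $\bar{\mathbf 3}\otimes\bar{\mathbf 3} = \bar{\mathbf 6}\oplus\mathbf 3$ that is the node-$2$ analogue of the $3\otimes 3 = 6\oplus 3$ relation appearing in the $w=(2,0)$ case.
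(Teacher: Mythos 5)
Your proposal is correct and follows exactly the route the paper takes: the paper obtains this proposition by the same direct specialization $\underline{x}=(x,xq_1)$ of the generic $w=(0,2)$ character, with $\scS(q_1)=0$ killing one three-monomial block and $\scS(q_1^{-1})$ surviving, followed by the same shift bookkeeping $xq\to(1,1)$, $xq_1q\to(2,1)$, $xq^2\to(2,2)$, $xq_1q^2\to(3,2)$. Your additional checks (no coincident same-node arguments, and the count $9=6+3$ matching $\bar{\mathbf 3}\otimes\bar{\mathbf 3}=\bar{\mathbf 6}\oplus\mathbf 3$) are consistent with the paper's surrounding discussion.
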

We may obtain a similar expression from another specializations $x = (x,xq_2)$.

\subsubsection*{Classical limit}
The classical limit of this $qq$-character is given by
\begin{subequations}
\begin{align}
    \sT[\mathbf{W}_{2_1,x}^{(2)}]  
    \xrightarrow{q_1 \to 1} \ &
    \sY_{2,x}^2 + \qty(\frac{\sY_{1,x;0,1}}{\sY_{2,x;0,1}})^2 + \frac{1}{\sY_{1,x;0,2}^2}
    + 2 \qty( \frac{\sY_{1,x;0,1} \sY_{2,x}}{\sY_{2,x;0,1}} + \frac{\sY_{2,x}}{\sY_{1,x;0,2}} + \frac{\sY_{1,x;0,1}}{\sY_{1,x;0,2} \sY_{2,x;0,1}} )
    \nonumber \\ & 
    = \qty(\sY_{2,x} + \frac{\sY_{1,x;0,1}}{\sY_{2,x;0,1}} + \frac{1}{\sY_{1,x;0,2}})^2 \, ,
    \\ 
    \xrightarrow{q_2 \to 1} \ &
    \sY_{2,x} \sY_{2,x;1,0} + \frac{\sY_{1,x;1,0} \sY_{1,x;2,0}}{\sY_{2,x;1,0} \sY_{2,x;2,0}} + \frac{1}{\sY_{1,x;2,0} \sY_{1,x;3,0}}
    % \nonumber \\ & \qquad
    + 
    \frac{\sY_{1,x;2,0} \sY_{2,x}}{\sY_{2,x;2,0}} + \frac{\sY_{2,x}}{\sY_{1,x;3,0}} + \frac{\sY_{1,x;1,0}}{\sY_{1,x;3,0} \sY_{2,x;1,0}}
    \, .
\end{align}
\end{subequations}
We obtain the degenerated tensor product of $w = (0,1)$ in the limit $q_1 \to 1$, while we obtain the $q$-character associated with the KR module $\mathbf{W}_{2_1,x}^{(2)}$ in the limit $q_2 \to 1$.

\subsubsection{$w = (1,1)$}\label{sec:A2_w11}

We consider the $qq$-character of weight $w=(1,1)$, 
\begin{align}
    \sT_{(1,1),x} & = \sY_{1,x_1} \sY_{2,x_2} + \frac{\sY_{2,x_1} \sY_{2,x_2}}{\sY_{1,x_1 q}} + \frac{\sY_{1,x_1} \sY_{1,x_2 q}}{\sY_{2,x_2 q}} + \frac{\sY_{1,x_2 q}}{\sY_{2,x_1 q} \sY_{2,x_2 q}} + \frac{\sY_{2,x_1 }}{\sY_{1,x_1 q} \sY_{1,x_2 q^2}} + \frac{1}{\sY_{1,x_2 q^2} \sY_{2,x_1 q}}
    \nonumber \\
    & \qquad 
    + \scS\qty(\frac{x_2}{x_1})  \frac{\sY_{2,x_2}}{\sY_{2,x_1 q}}
    + \scS\qty(\frac{x_1}{x_2}) \frac{\sY_{1,x_2 q} \sY_{2,x_1}}{\sY_{1,x_1 q} \sY_{2,x_2 q}}
    + \scS\qty(\frac{x_1}{x_2 q} ) \frac{\sY_{1,x_1}}{\sY_{1,x_2 q^2}}
    \, ,
    \label{eq:A2_T_11}
\end{align}
which is described by the Hasse diagram in Fig.~\ref{fig:Hasse_A2_11}.
The $qq$-character \eqref{eq:A2_T_11} contains 9 monomials, interpreted as the tensor product of two three-dimensional fundamental modules, $w=(1,0)$ and $w = (0,1)$.
In this case, we have the decomposition, $3 \otimes 3 = 8 \oplus 1$.
The residues of this $qq$-character are given by
\begin{align}
    \operatorname*{Res}_{x_1 = x_1} \sT_{(1,1),x} = \operatorname*{Res}_{x_1 = x_2 q} \sT_{(1,1),x} = 0 \, , \qquad 
    \operatorname*{Res}_{x_2 = x_1 q} \sT_{(1,1),x} = \operatorname*{Res}_{x_1 = x_2 q^2} \sT_{(1,1),x} = - \frac{(1-q_1)(1-q_2)}{1-q} \, .
\end{align}
The latter two residues are the non-highest dominant monomial ``1'', interpreted as the contribution of the one-dimensional module.

\begin{figure}[t]
    \centering
   \begin{tikzcd}[column sep=.5cm]
    && \sY_{1,x_1} \sY_{2,x_2} \arrow[ld,"{1,x_1}"'] \arrow[rd,"{2,x_2}"] && \\
    & \displaystyle \frac{\sY_{2,x_1} \sY_{2,x_2}}{\sY_{1,x_1 q}} \arrow[ld,"{2,x_1}"'] \arrow[rd,"{2,x_2}"] && \displaystyle \frac{\sY_{1,x_1} \sY_{1,x_2 q}}{\sY_{2,x_2 q}} \arrow[rd,"{1,x_2 q}"] \arrow[ld,"{1,x_1}"'] & \\
    \displaystyle \frac{\sY_{2,x_2}}{\sY_{2,x_1 q}} \arrow[rd,"{2,x_2}"'] && \displaystyle \frac{\sY_{1,x_2 q} \sY_{2,x_1}}{\sY_{1,x_1 q} \sY_{2,x_2 q}} \arrow[ld,"{2,x_1}"] \arrow[rd,"{1,x_2q}"'] && \displaystyle \frac{\sY_{1,x_1}}{\sY_{1,x_2 q^2}} \arrow[ld,"{1,x_1}"] \\
    & \displaystyle \frac{\sY_{1,x_2 q}}{\sY_{2,x_1 q} \sY_{2,x_2 q}} \arrow[rd,"{1,x_2 q}"'] && \displaystyle \frac{\sY_{2,x_1 }}{\sY_{1,x_1 q} \sY_{1,x_2 q^2}} \arrow[ld,"{2,x_1}"] & \\
    && \displaystyle \frac{1}{\sY_{1,x_2 q^2} \sY_{2,x_1 q}} &&
    \end{tikzcd}
    \caption{Hasse diagram for the iWeyl reflection flow of the weight $w= (1,1)$ for $A_2$ quiver.}
    \label{fig:Hasse_A2_11}
\end{figure}

In order to obtain the reduced $qq$-character of the weight $w = (1,1)$, we have three possible specializations: (a) $x = (x,xq_1)$, (b) $x = (xq_1,x)$, and (c) $x = (x q_1^2 q_2,x)$.
\begin{align}
    \begin{tabular*}{.85\textwidth}{@{\extracolsep{\fill}}rcccc}\toprule
    & $(x_1,x_2)$ & $\scS\qty(x_2/x_1)$ & $\scS\qty(x_1/x_2)$ & $\scS\qty(x_1/x_2q)$ \\\midrule
    (a) & $(x,xq_1)$ & $0$ & $\scS(q_1^{-1})$ & $\scS(q_1^{-2} q_2^{-1})$ \\
    (b) & $(xq_1,x)$ & $\scS(q_1^{-1})$ & $0$ & $\scS(q_2^{-1})$ \\
    (c) & $(x q_1^2q_2,x)$ & $\scS(q_1^{-2} q_2^{-1})$ & $\scS(q_1^{-1})$ & $0$ \\\bottomrule
    \end{tabular*}
\end{align}
\begin{proposition}
We have the following specializations of the $qq$-character, corresponding to the eight-dimensional module of $A_2$ quiver,
\begin{subequations}\label{eq:A2_T_11_red}
\begin{align}
    \sT_{(1,1),x}
    \ \xrightarrow{(\text{a})} \ &
    \sY_{1,x} \sY_{2,x;1,0} + \frac{\sY_{2,x} \sY_{2,x;1,0}}{\sY_{1,x;1,1}} + \frac{\sY_{1,x} \sY_{1,x;2,1}}{\sY_{2,x;2,1}} + \frac{\sY_{1,x;2,1}}{\sY_{2,x;1,1} \sY_{2,x;2,1}} + \frac{\sY_{2,x}}{\sY_{1,x;1,1} \sY_{1,x;3,2}} + \frac{1}{\sY_{1,x;3,2} \sY_{2,x;1,1}}
    \nonumber \\
    & \qquad 
    + \scS\qty(q_1^{-1}) \frac{\sY_{1,x;2,1} \sY_{2,x}}{\sY_{1,x;1,1} \sY_{2,x;2,1}}
    + \scS\qty( q_1^{-2} q_2^{-1} ) \frac{\sY_{1,x}}{\sY_{1,x;3,2}}
    \, , \\[1em]
    \xrightarrow{(\text{b})} \ &
    \sY_{1,x;1,0} \sY_{2,x} + \frac{\sY_{2,x} \sY_{2,x;1,0}}{\sY_{1,x;2,1}} + \frac{\sY_{1,x;1,0} \sY_{1,x;1,1}}{\sY_{2,x;1,1}} + \frac{\sY_{1,x;1,1}}{\sY_{2,x;1,1} \sY_{2,x;2,1}} + \frac{\sY_{2,x;1,0}}{\sY_{1,x;2,1} \sY_{1,x;2,2}} + \frac{1}{\sY_{1,x;2,2} \sY_{2,x;2,1}}
    \nonumber \\
    & \qquad 
    + \scS\qty(q_1^{-1})  \frac{\sY_{2,x}}{\sY_{2,x;2,1}}
    + \scS\qty( q_2^{-1} ) \frac{\sY_{1,x;1,0}}{\sY_{1,x;2,2}}
    \, , \\[1em]
    \xrightarrow{(\text{c})} \ &
    \sY_{1,x;2,1} \sY_{2,x} + \frac{\sY_{2,x;2,1} \sY_{2,x}}{\sY_{1,x;3,2}} + \frac{\sY_{1,x;1,1} \sY_{1,x;2,1}}{\sY_{2,x;1,1}} + \frac{\sY_{1,x;1,1}}{\sY_{2,x;1,1} \sY_{2,x;3,2}} + \frac{\sY_{2,x;2,1}}{\sY_{1,x;2,2} \sY_{1,x;3,2}} + \frac{1}{\sY_{1,x;2,2} \sY_{2,x;3,2}}
    \nonumber \\
    & \qquad 
    + \scS\qty(q_1^{-2}q_2^{-1})  \frac{\sY_{2,x}}{\sY_{2,x;3,2}}
    + \scS\qty(q_1^{-1}) \frac{\sY_{1,x;1,1} \sY_{2,x;2,1}}{\sY_{1,x;3,2} \sY_{2,x;1,1}}
    \, .
\end{align}
\end{subequations}
\end{proposition}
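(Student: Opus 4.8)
The plan is to mirror the proof of Theorem~\ref{thm:A_1_I}: each of the three specializations is treated as a direct substitution into the generic weight-$(1,1)$ expression \eqref{eq:A2_T_11}, which has already been produced by the iWeyl algorithm encoded in the Hasse diagram of Fig.~\ref{fig:Hasse_A2_11}. The structural observation that makes this clean is that in \eqref{eq:A2_T_11} the six monomials of coefficient one carry \emph{no} dependence on $\underline{x}$ in their coefficients, so that the entire weight-parameter dependence is confined to the three explicit factors $\scS(x_2/x_1)$, $\scS(x_1/x_2)$ and $\scS(x_1/x_2 q)$. Hence the reduction is controlled completely by the behaviour of these three $\scS$-factors, whose specialized values are exactly those recorded in the accompanying table.

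First I would substitute each of $\underline{x}=(x,xq_1)$, $(xq_1,x)$ and $(xq_1^2q_2,x)$ and evaluate the three arguments. In each case precisely one of them lands on a zero $z\in\{q_1,q_2\}$ of $\scS$, as listed in the table: the first factor for (a), the second for (b) and the third for (c). Because in the weight-$(1,1)$ character each $\scS$-factor multiplies a \emph{single} monomial --- in contrast to the $w=(2,0)$ case, where it multiplies a three-term block --- the vanishing of one $\scS$-factor deletes exactly one monomial, leaving $9-1=8$ terms. This matches the count $9=8\oplus 1$ anticipated earlier in this subsection, the Higgsing projecting onto the eight-dimensional summand.

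For case (c) I would additionally invoke the inversion relation $\scS(z)=\scS(z^{-1}q)$. With $\underline{x}=(xq_1^2q_2,x)$ one has $x_1/x_2=q_1^2q_2$, so that $x_1/(x_2 q)=q_1$ produces the required zero, while the surviving coefficient $\scS(q_1^2q_2)$ must be rewritten as $\scS(q_1^{-1})$ via the inversion relation in order to appear in the normalized form displayed in the statement; the same relation simultaneously confirms that $\scS(q_1^{-2}q_2^{-1})$ is regular, i.e.\ that no surviving argument collides with a pole $z\in\{1,q\}$. Cases (a) and (b) need no such rewriting, since their surviving arguments $q_1^{-1},\,q_1^{-2}q_2^{-1}$ and $q_1^{-1},\,q_2^{-1}$ are already regular.

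What remains is purely index bookkeeping: propagating each specialized weight parameter through the surviving $\sY$-functions and encoding the resulting shifts via $\sY_{i,x;j,k}=\sY_{i,xq_1^jq_2^k}$, so as to match each of the eight monomials listed in \eqref{eq:A2_T_11_red}. I expect this to be the only genuine obstacle --- not conceptually, but as an error-prone tracking of the eight $(j,k)$ shifts in each of the three cases through the flow of Fig.~\ref{fig:Hasse_A2_11}, together with the care required in (c) both to apply the inversion relation consistently and to verify that no surviving $\scS$-factor degenerates into a pole under the specialization.
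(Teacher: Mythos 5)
Your proposal is correct and is essentially the paper's own (implicit) argument: direct substitution of each specialization into the generic expression \eqref{eq:A2_T_11}, using the tabulated values of the three $\scS$-factors so that exactly one of the nine monomials is killed in each case, with the inversion relation $\scS(z)=\scS(z^{-1}q)$ needed in case (c) to rewrite $\scS(q_1^2q_2)$ as $\scS(q_1^{-1})$. The remaining step is, as you say, only the bookkeeping of the $q_1^jq_2^k$ shifts in the notation $\sY_{i,x;j,k}$.
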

%We remark that none of them is the KR module.

\subsubsection*{Classical limit}

In this case, we can discuss several classical limits of the $qq$-character.
From the classical limit of the $\scS$-factor \eqref{eq:S_lim} together with $\scS\qty(q_1^{-2}q_2^{-1}) \xrightarrow{q_1, q_2 \to 1} 1$, we have the following expressions:
\begin{itemize}
    \item[(a)] $x = (x,xq_1)$ 
    \begin{subequations}
    \begin{align}
    (q_1 \to 1) : \quad & 
    \sY_{1,x} \sY_{2,x} + \frac{\sY_{2,x}^2}{\sY_{1,x;0,1}} + \frac{\sY_{1,x} \sY_{1,x;0,1}}{\sY_{2,x;0,1}} + \frac{\sY_{1,x;0,1}}{\sY_{2,x;0,1}^2} + \frac{\sY_{2,x}}{\sY_{1,x;0,1} \sY_{1,x;0,2}} + \frac{1}{\sY_{1,x;0,2} \sY_{2,x;0,1}}
    \nonumber \\ &  \quad 
    + 2\frac{\sY_{2,x}}{\sY_{2,x;0,1}}
    + \frac{\sY_{1,x}}{\sY_{1,x;0,2}}
    = \qty( \sY_{1,x} + \frac{\sY_{2,x}}{\sY_{1,x;0,1}} + \frac{1}{\sY_{2,x;0,1}} ) \qty(\sY_{2,x} + \frac{\sY_{1,x;0,1}}{\sY_{2,x;0,1}} + \frac{1}{\sY_{1,x;0,2}})
    \, , \\[1em]
    (q_2 \to 1) : \quad & 
    \sY_{1,x} \sY_{2,x;1,0} + \frac{\sY_{2,x} \sY_{2,x;1,0}}{\sY_{1,x;1,0}} + \frac{\sY_{1,x} \sY_{1,x;2,0}}{\sY_{2,x;2,0}} + \frac{\sY_{1,x;2,0}}{\sY_{2,x;1,0} \sY_{2,x;2,0}} + \frac{\sY_{2,x}}{\sY_{1,x;1,0} \sY_{1,x;3,0}} + \frac{1}{\sY_{1,x;3,0} \sY_{2,x;1,0}}
    \nonumber \\
    & \quad 
    + \frac{\sY_{1,x;2,0} \sY_{2,x}}{\sY_{1,x;1,0} \sY_{2,x;2,0}}
    + \frac{\sY_{1,x}}{\sY_{1,x;3,0}}
    \, .
    \label{eq:A2_T_11_lima2}
    \end{align}
    \end{subequations}
    \item[(b)] $x = (xq_1,x)$ 
    \begin{subequations}
    \begin{align}
    (q_1 \to 1) : \quad & 
    \sY_{1,x} \sY_{2,x} + \frac{\sY_{2,x}^2}{\sY_{1,x;0,1}} + \frac{\sY_{1,x} \sY_{1,x;0,1}}{\sY_{2,x;0,1}} + \frac{\sY_{1,x;0,1}}{\sY_{2,x;0,1}^2} + \frac{\sY_{2,x}}{\sY_{1,x;0,1} \sY_{1,x;0,2}} + \frac{1}{\sY_{1,x;0,2} \sY_{2,x;0,1}}
    \nonumber \\
    & \quad 
    + 2 \frac{\sY_{2,x}}{\sY_{2,x;0,1}}
    + \frac{\sY_{1,x}}{\sY_{1,x;0,2}}
    = \qty( \sY_{1,x} + \frac{\sY_{2,x}}{\sY_{1,x;0,1}} + \frac{1}{\sY_{2,x;0,1}} ) \qty(\sY_{2,x} + \frac{\sY_{1,x;0,1}}{\sY_{2,x;0,1}} + \frac{1}{\sY_{1,x;0,2}})
    \, , \\[1em]
    (q_2 \to 1) : \quad & 
    \sY_{1,x;1,0} \sY_{2,x} + \frac{\sY_{2,x} \sY_{2,x;1,0}}{\sY_{1,x;2,0}} + \frac{\sY_{1,x;1,0}^2}{\sY_{2,x;1,0}} + \frac{\sY_{1,x;1,0}}{\sY_{2,x;1,0} \sY_{2,x;2,0}} + \frac{\sY_{2,x;1,0}}{\sY_{1,x;2,0}^2} + \frac{1}{\sY_{1,x;2,0} \sY_{2,x;2,0}}
    \nonumber \\
    & \quad 
    + \frac{\sY_{2,x}}{\sY_{2,x;2,0}}
    + 2 \frac{\sY_{1,x;1,0}}{\sY_{1,x;2,0}}
    = \qty( \sY_{1,x;1,0} + \frac{\sY_{2,x;1,0}}{\sY_{1,x;2,0}} + \frac{1}{\sY_{2,x;2,0}} ) \qty(\sY_{2,x} + \frac{\sY_{1,x;1,0}}{\sY_{2,x;1,0}} + \frac{1}{\sY_{1,x;2,0}})
    \, .
    \end{align}
    \end{subequations}
    \item[(c)] $x = (xq_1^2q_2,x)$
    \begin{subequations}
    \begin{align}
        (q_1 \to 1) : \quad & 
        \sY_{1,x;0,1} \sY_{2,x} + \frac{\sY_{2,x;0,1} \sY_{2,x}}{\sY_{1,x;0,2}} + \frac{\sY_{1,x;0,1}^2}{\sY_{2,x;0,1}} + \frac{\sY_{1,x;0,1}}{\sY_{2,x;0,1} \sY_{2,x;0,2}} + \frac{\sY_{2,x;0,1}}{\sY_{1,x;0,2}^2} + \frac{1}{\sY_{1,x;0,2} \sY_{2,x;0,2}}
    \nonumber \\
    & \qquad 
    + \frac{\sY_{2,x}}{\sY_{2,x;0,2}}
    + 2 \frac{\sY_{1,x;0,1}}{\sY_{1,x;0,2}}
    = \qty( \sY_{1,x;0,1} + \frac{\sY_{2,x;0,1}}{\sY_{1,x;0,2}} + \frac{1}{\sY_{2,x;0,2}} ) \qty(\sY_{2,x} + \frac{\sY_{1,x;0,1}}{\sY_{2,x;0,1}} + \frac{1}{\sY_{1,x;0,2}})
    \, , \\
    (q_2 \to 1) : \quad &
    \sY_{1,x;2,0} \sY_{2,x} + \frac{\sY_{2,x;2,0} \sY_{2,x}}{\sY_{1,x;3,0}} + \frac{\sY_{1,x;1,0} \sY_{1,x;2,0}}{\sY_{2,x;1,0}} + \frac{\sY_{1,x;1,0}}{\sY_{2,x;1,0} \sY_{2,x;3,0}} + \frac{\sY_{2,x;2,0}}{\sY_{1,x;2,0} \sY_{1,x;3,0}} + \frac{1}{\sY_{1,x;2,0} \sY_{2,x;3,0}}
    \nonumber \\
    & \qquad 
    + \frac{\sY_{2,x}}{\sY_{2,x;3,0}}
    + \frac{\sY_{1,x;1,0} \sY_{2,x;2,0}}{\sY_{1,x;3,0} \sY_{2,x;1,0}} 
    \, .
    \label{eq:A2_T_11_limc2}
    \end{align}
    \end{subequations}
\end{itemize}
We observe that \eqref{eq:A2_T_11_lima2} and \eqref{eq:A2_T_11_limc2} provide the $q$-character of the adjoint representation of $U_q(\widehat{\mathfrak{sl}}_3)$ (See, for example, \cite[\S3.1]{Nakai:2008pz}).
The remaining cases are the degenerate ones, where we obtain the product of two three-dimensional fundamental $q$-characters.

\section{$B_2/C_2$ quiver}\label{sec:BC2}

We consider $B_2/C_2$ quiver, which is the minimal rank non-simply-laced example.
In this case, we have the decoration (root) parameters $d = (d_1,d_2) = (2,1)$, and the iWeyl reflections for the $\sY$-variables are given by
\begin{align}
    \operatorname{iWeyl}: \
    \qty( \mathsf{Y}_{1,x} , \mathsf{Y}_{2,x} )
    \ & \longmapsto \
    \qty( 
    \frac{\mathsf{Y}_{2,x} \mathsf{Y}_{2,x q_1}}{\mathsf{Y}_{1,x q_1^2 q_2 }} , \frac{\mathsf{Y}_{1,x q_1 q_2}}{\mathsf{Y}_{2,x q_1 q_2}}
    )
    \, .
\end{align}
Then, the fundamental $qq$-characters are given as follows,
\begin{subequations}\label{eq:BC2_qq_ch}
\begin{align}
    % \mathsf{T}_{1,x} := 
    \mathsf{T}_{(1,0),x} & = \mathsf{Y}_{1,x} + \frac{\mathsf{Y}_{2,x} \mathsf{Y}_{2,x;1,0}}{\mathsf{Y}_{1,x;2,1}} + \mathscr{S}(q_1^{-1}) \frac{\mathsf{Y}_{2,x}}{\mathsf{Y}_{2,x;2,1}} + \frac{\mathsf{Y}_{1,x;1,1}}{\mathsf{Y}_{2,x;1,1} \mathsf{Y}_{2,x;2,1}} + \frac{1}{\mathsf{Y}_{1,x;3,2}}
    \, , \label{eq:BC2_qq_ch_10} \\
    %\mathsf{T}_{2,x} := 
    \mathsf{T}_{(0,1),x} & = \mathsf{Y}_{2,x} + \frac{\mathsf{Y}_{1,x;1,1}}{\mathsf{Y}_{2,x;1,1}} + \frac{\mathsf{Y}_{2,x;2,1}}{\mathsf{Y}_{1,x;3,2}} + \frac{1}{\mathsf{Y}_{2,x;3,2}}
    \, .
\end{align}
\end{subequations}
They correspond to the five and four-dimensional modules associated with the quantum affine algebra $U_q(\widehat{\mathfrak{so}}_5) \cong U_q(\widehat{\mathfrak{sp}}_2)$. % corresponding to $B_2/C_2$ quiver.
In the following, we construct the weight two $qq$-characters based on the iWeyl reflections presented here, and examine the irreducibility of the corresponding modules.

\subsection{$w = (2,0)$}

We consider the $qq$-character of weight $w=(2,0)$,
\begin{align}
    & \sT_{(2,0),x} = \sY_{1,x_1} \sY_{1,x_2} + \frac{\sY_{2,x_1} \sY_{2,x_1;1,0} \sY_{2,x_2} \sY_{2,x_2;1,0}}{\sY_{1,x_1;2,1} \sY_{1,x_2;2,1}} + \frac{\sY_{1,x_1;1,1} \sY_{1,x_2;1,1}}{\sY_{2,x_1;1,1} \sY_{2,x_1;2,1} \sY_{2,x_2;1,1} \sY_{2,x_2;2,1}} + \frac{1}{\sY_{1,x_1;3,2} \sY_{1,x_2;3,2}}
    \nonumber \\ &
    + \scS_2\qty(\frac{x_2}{x_1}) \Bigg[
    \frac{\sY_{1,x_2} \sY_{2,x_1} \sY_{2,x_1;1,0}}{\sY_{1,x_1;2,1}} 
    + \frac{\sY_{1,x_1;1,1} \sY_{1,x_2}}{\sY_{2,x_1;1,1} \sY_{2,x_1;2,1}}
    + \frac{\sY_{2,x_2} \sY_{2,x_2;1,0}}{\sY_{1,x_1;3,2} \sY_{1,x_2;2,1}}
    + \frac{\sY_{1,x_2;1,1}}{\sY_{1,x_2;3,2} \sY_{2,x_2;1,1} \sY_{2,x_2;2,1}}
    \nonumber \\ & \hspace{5.5em}
    + \scS(q_1^{-1}) \qty( 
    \frac{\sY_{1,x_2} \sY_{2,x_1}}{\sY_{2,x_1;2,1}}
    + \frac{\sY_{2,x_1} \sY_{2,x_2} \sY_{2,x_2;1,0}}{\sY_{1,x_2;2,1} \sY_{2,x_1;2,1}}
    + \frac{\sY_{1,x_1;1,1} \sY_{2,x_2}}{\sY_{2,x_1;1,1} \sY_{2,x_1;2,1} \sY_{2,x_2;2,1}}
    + \frac{\sY_{2,x_2}}{\sY_{1,x_1;3,2} \sY_{2,x_2;2,1}}
    )
    \nonumber \\ & \hspace{5.5em}
    + \scS_2\qty(\frac{x_2}{x_1}q_1^{-1}q_2^{-1}) \frac{\sY_{1,x_2}}{\sY_{1,x_1;3,2}}
    + \scS_2\qty(\frac{x_2}{x_1}q_1) \frac{\sY_{1,x_1;1,1} \sY_{2,x_2} \sY_{2,x_2;1,0}}{\sY_{1,x_2;1,1} \sY_{2,x_1;1,1} \sY_{2,x_1;2,1}}
    \Bigg]
    \nonumber \\ &
    + \scS_2\qty(\frac{x_1}{x_2}) \Bigg[
    \frac{\sY_{1,x_1} \sY_{2,x_2} \sY_{2,x_2;1,0}}{\sY_{1,x_2;2,1}} 
    + \frac{\sY_{1,x_2;1,1} \sY_{1,x_1}}{\sY_{2,x_2;1,1} \sY_{2,x_2;2,1}}
    + \frac{\sY_{2,x_1} \sY_{2,x_1;1,0}}{\sY_{1,x_2;3,2} \sY_{1,x_1;2,1}}
    + \frac{\sY_{1,x_1;1,1}}{\sY_{1,x_1;3,2} \sY_{2,x_1;1,1} \sY_{2,x_1;2,1}}
    \nonumber \\ & \hspace{5.5em}
    + \scS(q_1^{-1}) \qty( 
    \frac{\sY_{1,x_1} \sY_{2,x_2}}{\sY_{2,x_2;2,1}}
    + \frac{\sY_{2,x_2} \sY_{2,x_1} \sY_{2,x_1;1,0}}{\sY_{1,x_1;2,1} \sY_{2,x_2;2,1}}
    + \frac{\sY_{1,x_2;1,1} \sY_{2,x_1}}{\sY_{2,x_2;1,1} \sY_{2,x_2;2,1} \sY_{2,x_1;2,1}}
    + \frac{\sY_{2,x_1}}{\sY_{1,x_2;3,2} \sY_{2,x_1;2,1}}
    )
    \nonumber \\ & \hspace{5.5em}
    + \scS_2\qty(\frac{x_1}{x_2}q_1^{-1}q_2^{-1}) \frac{\sY_{1,x_1}}{\sY_{1,x_2;3,2}}
    + \scS_2\qty(\frac{x_1}{x_2}q_1) \frac{\sY_{1,x_2;1,1} \sY_{2,x_1} \sY_{2,x_1;1,0}}{\sY_{1,x_1;1,1} \sY_{2,x_2;1,1} \sY_{2,x_2;2,1}}
    \Bigg]    
    \nonumber \\ &
    + \scS(q_1^{-1})^2 \scS\qty(\frac{x_2}{x_1}q_1^{-1}) \scS\qty(\frac{x_1}{x_2}q_1^{-1}) \frac{\sY_{2,x_1} \sY_{2,x_2}}{\sY_{2,x_1;2,1} \sY_{2,x_2;2,1}} 
    \, .
\end{align}
The corresponding Hasse diagram is shown in Fig.~\ref{fig:Hasse_BC2_20}.
This $qq$-character contains 25 monomials, corresponding to the tensor product of two modules of $w = (1,0)$.
We shall see the decomposition of this module into the irreducible ones, $5 \otimes 5 = 14 \oplus 10 \oplus 1$.
The residues of this $qq$-character are given by
\begin{align}
    \operatorname*{Res}_{x_1 = x_2} \sT_{(2,0),x} = 0 \, , \qquad 
    \operatorname*{Res}_{x_1 = x_2 q_1^2 q_2} \sT_{(2,0),x} = - \frac{(1-q_1^2)(1-q_2)}{1-q_1^2 q_2} \sT[\mathbf{W}_{2_1,x_2}^{(2)}] \, ,
\end{align}
where $\sT[\mathbf{W}_{2_1,x}^{(2)}]$ is the $qq$-character of the KR module $\mathbf{W}_{2_1,x}^{(2)}$ given in Proposition~\ref{prop:BC2_KR02}.

Recalling that $\scS_2(z) = 0$ at $z = q_1^2, q_2$, we specialize the spectral parameters as $x = (x,xq_1^2)$.
This is consistent with that the KR module $\mathbf{W}_{2_1,x}^{(1)}$ should be considered with $q_1^2$-shift for the node $d_1 = 2$.
\begin{proposition}\label{prop:BC2_KR20}
The $qq$-character associated with the KR module $\mathbf{W}_{2_1,x}^{(1)}$ is given as follows,
\begin{align}
    & \sT[\mathbf{W}_{2_1,x}^{(1)}] = \sY_{1,x} \sY_{1,x;2,0} + \frac{\sY_{2,x} \sY_{2,x;1,0} \sY_{2,x;2,0} \sY_{2,x;3,0}}{\sY_{1,x;2,1} \sY_{1,x;4,1}} + \frac{\sY_{1,x;1,1} \sY_{1,x;3,1}}{\sY_{2,x;1,1} \sY_{2,x;2,1} \sY_{2,x;3,1} \sY_{2,x;4,1}} + \frac{1}{\sY_{1,x;3,2} \sY_{1,x;5,2}}
    \nonumber \\ & 
    + \scS_2\qty(q_1^{-2}) \Bigg[
    \frac{\sY_{1,x} \sY_{2,x;2,0} \sY_{2,x;3,0}}{\sY_{1,x;4,1}} 
    + \frac{\sY_{1,x;3,1} \sY_{1,x}}{\sY_{2,x;3,1} \sY_{2,x;4,1}}
    + \frac{\sY_{2,x} \sY_{2,x;1,0}}{\sY_{1,x;5,2} \sY_{1,x;2,1}}
    + \frac{\sY_{1,x;1,1}}{\sY_{1,x;3,2} \sY_{2,x;1,1} \sY_{2,x;2,1}}
    \nonumber \\ & \hspace{5.5em}
    + \scS(q_1^{-1}) \qty( 
    \frac{\sY_{1,x} \sY_{2,x;2,0}}{\sY_{2,x;4,1}}
    + \frac{\sY_{2,x;2,0} \sY_{2,x} \sY_{2,x;1,0}}{\sY_{1,x;2,1} \sY_{2,x;4,1}}
    + \frac{\sY_{1,x;3,1} \sY_{2,x}}{\sY_{2,x;3,1} \sY_{2,x;4,1} \sY_{2,x;2,1}}
    + \frac{\sY_{2,x}}{\sY_{1,x;5,2} \sY_{2,x;2,1}}
    )
    \nonumber \\ & \hspace{5.5em}
    + \scS_2\qty(q_1^{-3}q_2^{-1}) \frac{\sY_{1,x}}{\sY_{1,x;5,2}}
    + \scS_2\qty(q_1^{-1}) \frac{\sY_{1,x;3,1} \sY_{2,x} \sY_{2,x;1,0}}{\sY_{1,x;1,1} \sY_{2,x;3,1} \sY_{2,x;4,1}}
    \Bigg]
    \, ,
\end{align}
which corresponds to the 14-dimensional module of $B_2/C_2$ quiver for the weight $w = (2,0)$.
\end{proposition}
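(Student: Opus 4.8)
The plan is to start from the explicit generic $qq$-character $\sT_{(2,0),\underline{x}}$ displayed above and perform the specialization $\underline{x} = (x, xq_1^2)$ directly, tracking which of the $25$ monomials survive. Writing $x_1 = x$ and $x_2 = xq_1^2$, the two cross-ratios become $x_2/x_1 = q_1^2$ and $x_1/x_2 = q_1^{-2}$, and every $\sY$-function argument of the form $x_i q_1^j q_2^k$ collapses onto the lattice $x q_1^{\bullet} q_2^{\bullet}$, so that each surviving monomial is rewritten in the $\sY_{\cdot,x;j,k}$ notation by a routine shift computation (e.g. $\sY_{2,x_2;1,0} = \sY_{2,xq_1^3} = \sY_{2,x;3,0}$, and so on).

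The key mechanism is the vanishing of two $\scS$-factors at this specialization. First, recalling that $\scS_2(z) = 0$ at $z = q_1^2$, the prefactor $\scS_2(x_2/x_1) = \scS_2(q_1^2)$ vanishes, eliminating the entire first big bracket (ten monomials). Second, the coefficient of the very last monomial contains the factor $\scS(\tfrac{x_2}{x_1} q_1^{-1}) = \scS(q_1)$, which vanishes because $\scS(z) = 0$ at $z = q_1$; the remaining factors $\scS(q_1^{-1})^2 \scS(q_1^{-3})$ being finite, the product is unambiguously zero and that monomial drops as well. This removes $10 + 1 = 11$ monomials, leaving exactly $25 - 11 = 14$, consistent with the branching $5 \otimes 5 = 14 \oplus 10 \oplus 1$.

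What survives is the first line (four monomials with unit coefficient) together with the second bracket, whose prefactor specializes to $\scS_2(x_1/x_2) = \scS_2(q_1^{-2}) \neq 0$. Inside that bracket the internal coefficients specialize as $\scS_2(\tfrac{x_1}{x_2} q_1^{-1} q_2^{-1}) = \scS_2(q_1^{-3} q_2^{-1})$ and $\scS_2(\tfrac{x_1}{x_2} q_1) = \scS_2(q_1^{-1})$, while the $x$-independent factor $\scS(q_1^{-1})$ is untouched. Substituting the shifted arguments into the $\sY$-functions then reproduces the asserted formula term by term.

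The only point requiring care, and the step I expect to be the main obstacle, is the bookkeeping: one must check that none of the fourteen surviving monomials coincide after the specialization (which would force them to combine and spoil the term-by-term matching), and that each of the two eliminated coefficients is a genuine zero rather than an indeterminate $0 \cdot \infty$. Both follow from the pole structure: the poles of $\scS_2$ lie only at $z = 1, q_1^2 q_2$ and those of $\scS$ only at $z = 1, q$, so for generic $q_{1,2}$ none of the arguments arising in the specialization meets a pole. Hence every retained coefficient is finite, every discarded coefficient is a clean zero, and the fourteen monomials remain distinct on the lattice $x q_1^{\bullet} q_2^{\bullet}$, which completes the identification with the $14$-dimensional module.
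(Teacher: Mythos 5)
Your proposal is correct and follows essentially the same route as the paper: the proposition is obtained by direct specialization $\underline{x}=(x,xq_1^2)$ of the displayed 25-term $\sT_{(2,0),\underline{x}}$, with the first bracket killed by $\scS_2(q_1^2)=0$ and the final singlet killed by $\scS(q_1)=0$, leaving the $4+10=14$ monomials consistent with $5\otimes 5 = 14\oplus 10\oplus 1$. Your added checks that the vanishing coefficients are clean zeros (no pole of $\scS$ or $\scS_2$ is met at the specialized arguments) and that the surviving monomials stay distinct are sensible bookkeeping that the paper leaves implicit.
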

%We remark that the eliminated terms due to the factor $\scS_2(x_2/x_1)$ correspond to the 10-dimensional module of weight $w = (0,2)$ discussed in \S\ref{sec:BC2_w02}.

\begin{landscape}
\begin{figure}[t]
    \centering
\begin{tikzcd}[column sep=-.75cm,row sep=.7cm]
 &&&& \sY_{1,x_1} \sY_{1,x_2} \arrow[ld,"{1,x_1}"'] \arrow[rd,"{1,x_2}"] &&&& \\
 &&& \displaystyle \frac{\sY_{1,x_2} \sY_{2,x_1} \sY_{2,x_1;1,0}}{\sY_{1,x_1;2,1}} \arrow[ld,"{2,x_1;1,0}"'] \arrow[rd,"{1,x_2}"] && \displaystyle \frac{\sY_{1,x_1} \sY_{2,x_2} \sY_{2,x_2;1,0}}{\sY_{1,x_2;2,1}} \arrow[ld,"{1,x_1}"'] \arrow[rd,"{2,x_2;1,0}"] &&& \\
 && \displaystyle \frac{\sY_{1,x_2} \sY_{2,x_1}}{\sY_{2,x_1;2,1}} \arrow[ld,"{2,x_1}"'] \arrow[rd,"{1,x_2}"] && \displaystyle \frac{\sY_{2,x_1} \sY_{2,x_1;1,0} \sY_{2,x_2} \sY_{2,x_2;1,0}}{\sY_{1,x_1;2,1} \sY_{1,x_2;2,1}} \arrow[ld,"{2,x_1;1,0}"'] \arrow[rd,"{2,x_2;1,0}"] && \displaystyle \frac{\sY_{1,x_1} \sY_{2,x_2}}{\sY_{2,x_2;2,1}} \arrow[ld,"{1,x_1}"'] \arrow[rd,"{2,x_2}"] && \\
 & \displaystyle \frac{\sY_{1,x_1;1,1} \sY_{1,x_2}}{\sY_{2,x_1;1,1} \sY_{2,x_1;2,1}} \arrow[ld,"{1,x_1;1,1}"'] \arrow[rd,"{1,x_2}"] && \displaystyle \frac{\sY_{2,x_1} \sY_{2,x_2} \sY_{2,x_2;1,0}}{\sY_{1,x_2;2,1} \sY_{2,x_1;2,1}} \arrow[ld,"{2,x_1}"'] \arrow[rd,"{2,x_2;1,0}"] && \displaystyle \frac{\sY_{2,x_1} \sY_{2,x_1;1,0} \sY_{2,x_2}}{\sY_{1,x_1;2,1} \sY_{2,x_2;2,1}} \arrow[ld,"{2,x_1;1,0}"'] \arrow[rd,"{2,x_2}"] && \displaystyle \frac{\sY_{1,x_1} \sY_{1,x_2;1,1}}{\sY_{2,x_2;1,1} \sY_{2,x_2;2,1}} \arrow[ld,"{1,x_1}"'] \arrow[rd,"{1,x_2;1,1}"] & \\
 \displaystyle \frac{\sY_{1,x_2}}{\sY_{1,x_1;3,2}} \arrow[rd,"{1,x_2}"'] && \displaystyle \frac{\sY_{1,x_1;1,1} \sY_{2,x_2} \sY_{2,x_2;1,0}}{\sY_{1,x_2;2,1} \sY_{2,x_1;1,1} \sY_{2,x_1;2,1}} \arrow[ld,"{1,x_1;1,1}"] \arrow[rd,"{2,x_2;1,0}"'] && \displaystyle \frac{\sY_{2,x_1} \sY_{2,x_2}}{\sY_{2,x_1;2,1} \sY_{2,x_2;2,1}} \arrow[ld,"{2,x_1}"] \arrow[rd,"{2,x_2}"'] && \displaystyle \frac{\sY_{1,x_2;1,1} \sY_{2,x_1} \sY_{2,x_1;1,0}}{\sY_{1,x_1;2,1} \sY_{2,x_2;1,1} \sY_{2,x_2;2,1}} \arrow[ld,"{2,x_1;1,0}"] \arrow[rd,"{1,x_2;1,1}"'] && \displaystyle \frac{\sY_{1,x_1}}{\sY_{1,x_2;3,2}} \arrow[ld,"{1,x_1}"] \\
 & \displaystyle \frac{\sY_{2,x_2} \sY_{2,x_2;1,0}}{\sY_{1,x_1;3,2} \sY_{1,x_2;2,1}} \arrow[rd,"{2,x_2;1,0}"'] && \displaystyle \frac{\sY_{1,x_1;1,1} \sY_{2,x_2}}{\sY_{2,x_1;1,1} \sY_{2,x_1;2,1} \sY_{2,x_2;2,1}} \arrow[ld,"{1,x_1;1,1}"] \arrow[rd,"{2,x_2}"'] && \displaystyle \frac{\sY_{1,x_2;1,1} \sY_{2,x_1}}{\sY_{2,x_2;1,1} \sY_{2,x_2;2,1} \sY_{2,x_1;2,1}} \arrow[ld,"{2,x_1}"] \arrow[rd,"{1,x_2;1,1}"'] && \displaystyle \frac{\sY_{2,x_1} \sY_{2,x_1;1,0}}{\sY_{1,x_1;2,1} \sY_{1,x_2;3,2}} \arrow[ld,"{2,x_1;1,0}"] & \\
 && \displaystyle \frac{\sY_{2,x_2}}{\sY_{1,x_1;3,2} \sY_{2,x_2;2,1}} \arrow[rd,"{2,x_2}"'] && \displaystyle \frac{\sY_{1,x_1;1,1} \sY_{1,x_2;1,1}}{\sY_{2,x_1;1,1} \sY_{2,x_1;2,1} \sY_{2,x_1;1,1} \sY_{2,x_2;2,1}} \arrow[ld,"{1,x_1;1,1}"] \arrow[rd,"{1,x_2;1,1}"'] && \displaystyle \frac{\sY_{2,x_1}}{\sY_{1,x_1;2,1} \sY_{1,x_2;3,2}} \arrow[ld,"{2,x_1}"] && \\
 &&& \displaystyle \frac{\sY_{1,x_2;1,1}}{\sY_{1,x_1;3,2} \sY_{2,x_2;1,1} \sY_{2,x_2;2,1}} \arrow[rd,"{1,x_2;1,1}"'] && \displaystyle \frac{\sY_{1,x_1;1,1}}{\sY_{1,x_2;3,2} \sY_{2,x_1;1,1} \sY_{2,x_1;2,1}} \arrow[ld,"{1,x_1;1,1}"] &&& \\
 &&&& \displaystyle \frac{1}{\sY_{1,x_1;3,2} \sY_{1,x_2;3,2}} &&&&
\end{tikzcd}
    \caption{Hasse diagram for the iWeyl reflection flow of the weight $w= (2,0)$ for $B_2/C_2$ quiver}
    \label{fig:Hasse_BC2_20}
\end{figure}
\end{landscape}

\subsubsection*{Classical limit}

In addition to the classical limit of the $\scS$-factors \eqref{eq:S_lim} and \eqref{eq:S_lim2}, we also have $\scS(q_1^{-3} q_2^{-1}) \xrightarrow{q_1, q_2 \to 1} 1$ and
\begin{align}
    \scS_2(q_1^{-2}) = (1 + q_1^{-2}) \frac{1 - q_1^{-2} q_2^{-1}}{1 - q_1^{-4} q_2^{-1}}
    \ \longrightarrow \
    \begin{cases}
    2 & (q_1 \to 1) \\ 1 & (q_2 \to 1)
    \end{cases}
    \label{eq:S_lim3}
\end{align}
Hence, the $qq$-character is reduced in the classical limit as follows,
\begin{subequations}
\begin{align}
    \sT[\mathbf{W}_{2_1,x}^{(1)}]
    \xrightarrow{q_1 \to 1} \ &
    \sY_{1,x}^2 + \frac{\sY_{2,x}^4}{\sY_{1,x;0,1}^2} + \frac{\sY_{1,x;0,1}^2}{\sY_{2,x;0,1}^4} + \frac{1}{\sY_{1,x;0,2}^2}
     \nonumber \\ & 
    + 2 \qty(
    \frac{\sY_{1,x} \sY_{2,x}^2}{\sY_{1,x;0,1}} 
    + \frac{\sY_{1,x} \sY_{1,x;0,1}}{\sY_{2,x;0,1}^2}
    + \frac{\sY_{2,x}^2}{\sY_{1,x;0,1} \sY_{1,x;0,2}}
    + \frac{\sY_{1,x;0,1}}{\sY_{1,x;0,2} \sY_{2,x;0,1}^2}
    + \frac{\sY_{1,x}}{\sY_{1,x;0,2}}
    )
    \nonumber \\ & %\hspace{5.5em}
    + 4 \qty( 
    \frac{\sY_{1,x} \sY_{2,x}}{\sY_{2,x;0,1}}
    + \frac{\sY_{2,x}^3}{\sY_{1,x;0,1} \sY_{2,x;0,1}}
    + \frac{\sY_{1,x;0,1} \sY_{2,x}}{\sY_{2,x;0,1}^3}
    + \frac{\sY_{2,x}}{\sY_{1,x;0,2} \sY_{2,x;0,1}}
    )
    + 6 \frac{\sY_{2,x}^2}{\sY_{2,x;0,1}^2}
    \nonumber \\ &
    = \qty(\sY_{1,x} + \frac{\sY_{2,x}^2}{\sT_{1,x;0,1}} + 2 \frac{\sY_{2,x}}{\sY_{2,x;0,1}} + \frac{\sY_{1,x;0,1}}{\sY_{2,x;0,1}^2} + \frac{1}{\sY_{1,x;0,2}} )^2 
    \, ,
    \\[.5em]
    \xrightarrow{q_2 \to 1} \ &
    \sY_{1,x} \sY_{1,x;2,0} + \frac{\sY_{2,x} \sY_{2,x;1,0} \sY_{2,x;2,0} \sY_{2,x;3,0}}{\sY_{1,x;2,0} \sY_{1,x;4,0}} + \frac{\sY_{1,x;1,0} \sY_{1,x;3,0}}{\sY_{2,x;1,0} \sY_{2,x;2,0} \sY_{2,x;3,0} \sY_{2,x;4,0}} + \frac{1}{\sY_{1,x;3,0} \sY_{1,x;5,0}}
    \nonumber \\ & 
    + \frac{\sY_{1,x} \sY_{2,x;2,0} \sY_{2,x;3,0}}{\sY_{1,x;4,0}} 
    + \frac{\sY_{1,x;3,0} \sY_{1,x}}{\sY_{2,x;3,0} \sY_{2,x;4,0}}
    + \frac{\sY_{2,x} \sY_{2,x;1,0}}{\sY_{1,x;5,0} \sY_{1,x;2,0}}
    + \frac{\sY_{1,x;1,0}}{\sY_{1,x;3,0} \sY_{2,x;1,0} \sY_{2,x;2,0}}
    \nonumber \\ & % \hspace{5.5em}
    + \frac{\sY_{1,x} \sY_{2,x;2,0}}{\sY_{2,x;4,0}}
    + \frac{\sY_{2,x;2,0} \sY_{2,x} \sY_{2,x;1,0}}{\sY_{1,x;2,0} \sY_{2,x;4,0}}
    + \frac{\sY_{1,x;3,0} \sY_{2,x}}{\sY_{2,x;3,0} \sY_{2,x;4,0} \sY_{2,x;2,0}}
    + \frac{\sY_{2,x}}{\sY_{1,x;5,0} \sY_{2,x;2,0}}
    \nonumber \\ & % \hspace{5.5em}
    + \frac{\sY_{1,x}}{\sY_{1,x;5,0}}
    + \frac{\sY_{1,x;3,0} \sY_{2,x} \sY_{2,x;1,0}}{\sY_{1,x;1,0} \sY_{2,x;3,0} \sY_{2,x;4,0}}
    \, .
\end{align}
\end{subequations}
We obtain the $q$-character of the 14-dimensional module in the limit $q_2 \to 1$.
On the other hand, the limit $q_1 \to 1$ provides the tensor product of the fundamental module $w = (1,0)$.
We remark that, in the limit $q_1 \to 1$, the fundamental $qq$-character \eqref{eq:BC2_qq_ch_10} is already degenerated due to the $\scS$-factor.
The resulting module is associated with the six-dimensional module of $A_3/D_3$ quiver, which is the fractionalization of $B_2/C_2$ quiver~\cite{KPfractional}.

\subsection{$w = (0,2)$}\label{sec:BC2_w02}

The $qq$-character of weight $w = (0,2)$ is given as follows,
\begin{align}
    & \sT_{(0,2),x} = \sY_{2,x_1} \sY_{2,x_2} + \frac{\sY_{1,x_1;1,1} \sY_{1,x_2;1,1}}{\sY_{2,x_1;1,1} \sY_{2,x_2;1,1}} + \frac{\sY_{2,x_1;2,1} \sY_{2,x_2;2,1}}{\sY_{1,x_1;3,2} \sY_{1,x_2;3,2}} + \frac{1}{\sY_{2,x_1;3,2} \sY_{2,x_2;3,2}}
    \nonumber \\ &
    + \scS\qty(\frac{x_2}{x_1}) \qty[ \frac{\sY_{1,x_1;1,1} \sY_{2,x_2}}{\sY_{2,x_1;1,1}} + \frac{\sY_{2,x_1;2,1} \sY_{2,x_2}}{\sY_{1,x_1;3,2}} + \scS\qty(\frac{x_2}{x_1}q_1^{-2}q_2^{-1}) \frac{\sY_{2,x_2}}{\sY_{2,x_1;3,2}} + \frac{\sY_{1,x_2;1,1}}{\sY_{2,x_1;3,2} \sY_{2,x_2;1,1}} + \frac{\sY_{2,x_2;2,1}}{\sY_{1,x_2;3,2} \sY_{2,x_1;3,2}} ]
    \nonumber \\ &
    + \scS\qty(\frac{x_1}{x_2}) \qty[
    \frac{\sY_{1,x_2;1,1} \sY_{2,x_1}}{\sY_{2,x_2;1,1}} + \frac{\sY_{2,x_2;2,1} \sY_{2,x_1}}{\sY_{1,x_2;3,2}} + \scS\qty(\frac{x_1}{x_2}q_1^{-2}q_2^{-1}) \frac{\sY_{2,x_1}}{\sY_{2,x_2;3,2}} + \frac{\sY_{1,x_1;1,1}}{\sY_{2,x_2;3,2} \sY_{2,x_1;1,1}} + \frac{\sY_{2,x_1;2,1}}{\sY_{1,x_1;3,2} \sY_{2,x_2;3,2}}
    ]
    \nonumber \\ &
    + \scS_2\qty(\frac{x_2}{x_1}) \frac{\sY_{1,x_2;1,1} \sY_{2,x_1;2,1}}{\sY_{1,x_1;3,2} \sY_{2,x_2;1,1}} + \scS_2\qty(\frac{x_1}{x_2}) \frac{\sY_{1,x_1;1,1} \sY_{2,x_2;2,1}}{\sY_{1,x_2;3,2} \sY_{2,x_1;1,1}}
    \, ,
\end{align}
where the iWeyl reflection flow is described by the Hasse diagram in Fig.~\ref{fig:Hasse_BC2_02}.
There exist 16 monomials in the $qq$-character. 
We have the decomposition, $4 \otimes 4 = 10 \oplus 5 \oplus 1$.
The residues of this $qq$-character are given by
\begin{align}
    \operatorname*{Res}_{x_1=x_2} \sT_{(0,2),x} = \operatorname*{Res}_{x_1=x_2 q_1^2 q_2} \sT_{(0,2),x} = 0 \, , \quad
    \operatorname*{Res}_{x_1=x_2 q_1 q_2} \sT_{(0,2),x} = - \frac{(1-q_1)(1-q_2)}{1-q_1q_2} \sT_{(1,0),x_2} \, .
\end{align}

\begin{figure}[t]
    \centering
\begin{tikzcd}[column sep=-.4cm]
 &&& \sY_{2,x_1} \sY_{2,x_2} \arrow[ld,"{1,x_1}"'] \arrow[rd,"{1,x_2}"] &&& \\
 && \displaystyle \frac{\sY_{1,x_1;1,1} \sY_{2,x_2}}{\sY_{2,x_1;1,1}} \arrow[ld,"{1,x_1;1,1}"'] \arrow[rd,"{2,x_2}"] && \displaystyle \frac{\sY_{1,x_2;1,1} \sY_{2,x_1}}{\sY_{2,x_2;1,1}} \arrow[ld,"{2,x_1}"'] \arrow[rd,"{1,x_2;1,1}"] && \\
 & \displaystyle \frac{\sY_{2,x_1;2,1} \sY_{2,x_2}}{\sY_{1,x_1;3,2}} \arrow[ld,"{2,x_1;2,1}"'] \arrow[rd,"{2,x_2}"] && \displaystyle \frac{\sY_{1,x_1;1,1} \sY_{1,x_2;1,1}}{\sY_{2,x_1;1,1} \sY_{2,x_2;1,1}} \arrow[ld,"{1,x_1;1,1}"'] \arrow[rd,"{1,x_2;1,1}"] && \displaystyle \frac{\sY_{2,x_2;2,1} \sY_{2,x_1}}{\sY_{1,x_2;3,2}} \arrow[ld,"{2,x_1}"'] \arrow[rd,"{2,x_2;2,1}"] & \\
 \displaystyle \frac{\sY_{2,x_2}}{\sY_{2,x_1;3,2}} \arrow[rd,"{2,x_2}"] && \displaystyle \frac{\sY_{1,x_2;1,1} \sY_{2,x_1;2,1}}{\sY_{1,x_1;3,2} \sY_{2,x_2;1,1}} \arrow[ld,"{2,x_1;2,1}"'] \arrow[rd,"{1,x_2;1,1}"] && \displaystyle \frac{\sY_{1,x_1;1,1} \sY_{2,x_2;2,1}}{\sY_{1,x_2;3,2} \sY_{2,x_1;1,1}} \arrow[ld,"{1,x_1;1,1}"'] \arrow[rd,"{2,x_2;2,1}"] && \displaystyle \frac{\sY_{2,x_1}}{\sY_{2,x_2;3,2}} \arrow[ld,"{2,x_1}"'] \\
 & \displaystyle \frac{\sY_{1,x_2;1,1}}{\sY_{2,x_1;3,2} \sY_{2,x_2;1,1}} \arrow[rd,"{1,x_2;1,1}"] && \displaystyle \frac{\sY_{2,x_1;2,1} \sY_{2,x_2;2,1}}{\sY_{1,x_1;3,2} \sY_{1,x_2;3,2}} \arrow[ld,"{2,x_1;2,1}"'] \arrow[rd,"{2,x_2;2,1}"] && \displaystyle \frac{\sY_{1,x_1;1,1}}{\sY_{2,x_2;3,2} \sY_{2,x_1;1,1}} \arrow[ld,"{1,x_1;1,1}"'] & \\
 && \displaystyle \frac{\sY_{2,x_2;2,1}}{\sY_{1,x_2;3,2} \sY_{2,x_1;3,2}} \arrow[rd,"{2,x_2;2,1}"] && \displaystyle \frac{\sY_{2,x_1;2,1}}{\sY_{1,x_1;3,2} \sY_{2,x_2;3,2}} \arrow[ld,"{2,x_1;2,1}"'] && \\
 &&& \displaystyle \frac{1}{\sY_{2,x_1;3,2} \sY_{2,x_2;3,2}} &&&
\end{tikzcd}
\caption{Hasse diagram for the iWeyl reflection flow of the weight $w=(0,2)$ for $B_2/C_2$ quiver.}
    \label{fig:Hasse_BC2_02}
\end{figure}

\begin{proposition}\label{prop:BC2_KR02}
Specializing the spectral parameters $x = (x,xq_1)$, we obtain the $qq$-character for the KR module $\mathbf{W}_{2_1,x}^{(2)}$ as follows,
\begin{align}
    \sT[\mathbf{W}_{2_1,x}^{(2)}] & = \sY_{2,x} \sY_{2,x;1,0} + \frac{\sY_{1,x;1,1} \sY_{1,x;2,1}}{\sY_{2,x;1,1} \sY_{2,x;2,1}} + \frac{\sY_{2,x;2,1} \sY_{2,x;3,1}}{\sY_{1,x;3,2} \sY_{1,x;4,2}} + \frac{1}{\sY_{2,x;3,2} \sY_{2,x;4,2}}
    \nonumber \\ &
    + \scS\qty(q_1^{-1}) \qty[
    \frac{\sY_{1,x;2,1} \sY_{2,x}}{\sY_{2,x;2,1}} + \frac{\sY_{2,x;3,1} \sY_{2,x}}{\sY_{1,x;4,2}} + \scS\qty(q_1^{-3}q_2^{-1}) \frac{\sY_{2,x}}{\sY_{2,x;4,2}} + \frac{\sY_{1,x;1,1}}{\sY_{2,x;4,2} \sY_{2,x;1,1}} + \frac{\sY_{2,x;2,1}}{\sY_{1,x;3,2} \sY_{2,x;4,2}}
    ]
    \nonumber \\ &
    + \scS_2\qty(q_1) \frac{\sY_{1,x;2,1}}{\sY_{1,x;3,2}} + \scS_2\qty(q_1^{-1}) \frac{\sY_{1,x;1,1} \sY_{2,x;3,1}}{\sY_{1,x;4,2} \sY_{2,x;1,1}}
    \, ,
\end{align}
which corresponds to the 10-dimensional module (adjoint representation of $\mathrm{SO}(5) \cong \mathrm{Sp}(2)$) with the affine extension.    
\end{proposition}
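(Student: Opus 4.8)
The plan is to derive the stated expression by specializing the generic weight $w=(0,2)$ $qq$-character displayed just above the statement, at $x_1 = x$ and $x_2 = xq_1$, so that $x_2/x_1 = q_1$ and $x_1/x_2 = q_1^{-1}$. Since that generic character has already been computed, the whole argument reduces to substitution together with bookkeeping of the spectral-parameter shifts, organized according to the $\scS$- and $\scS_2$-coefficients that multiply each block of monomials; this parallels the reductions already carried out for $w=(2,0)$ and for the $A_2$ cases.

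The first and decisive step is the vanishing $\scS(q_1) = 0$, which follows from the zero of the $\scS$-function at $z = q_1$ recorded in \S\ref{sec:S-func}. The entire second line of $\sT_{(0,2),\underline{x}}$ carries the overall factor $\scS(x_2/x_1) = \scS(q_1)$, so its five monomials drop out at once. These five are precisely the five-dimensional summand in the decomposition $4 \otimes 4 = 10 \oplus 5 \oplus 1$, and their removal is the content of the irreducible reduction; the remaining eleven monomials are those assembled in $\sT[\mathbf{W}_{2_1,x}^{(2)}]$.

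I would then evaluate the coefficients that survive. The factor $\scS(x_1/x_2)$ becomes $\scS(q_1^{-1})$ and its nested companion $\scS(x_1/x_2\,q_1^{-2}q_2^{-1})$ becomes $\scS(q_1^{-3}q_2^{-1})$, which reproduces the bracket on the second line of the statement. For the last line I would invoke that $\scS_2$ has zeros only at $z = q_1^2$ and $z = q_2$ (the remark preceding the statement), whence $\scS_2(q_1) \neq 0$ and $\scS_2(q_1^{-1}) \neq 0$, so both $\scS_2$-monomials persist with coefficients $\scS_2(q_1)$ and $\scS_2(q_1^{-1})$. Finally I would substitute $x_2 = xq_1$ into every $\sY$-function and collect the shifts through $\sY_{i,x;j,k} = \sY_{i,xq_1^j q_2^k}$; for instance $\sY_{1,x_2;1,1} = \sY_{1,xq_1\cdot q_1 q_2} = \sY_{1,x;2,1}$, and the remaining factors are reindexed analogously to give the displayed monomials.

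The main point requiring justification --- and the place I expect the only real difficulty --- is that this naive substitution is legitimate, i.e.\ that no surviving coefficient develops a pole and that the specialization genuinely respects the iWeyl algorithm rather than merely truncating a formal sum. For the former I would check that $x_2/x_1 = q_1$ avoids the poles $z \in \{1, q\}$ of $\scS$ and $z \in \{1, q_1^2 q_2\}$ of $\scS_2$ for generic $q_{1,2}$; in particular the product $\scS(q_1)\,\scS(q_1^{-1}q_2^{-1})$ inside the eliminated bracket is an honest zero and not a $0 \cdot \infty$ ambiguity. For the latter I would verify that the eleven retained monomials remain closed under the reflection flow of Fig.~\ref{fig:Hasse_BC2_02} at the specialized parameters, which simultaneously confirms the result and identifies it with the module that restricts to the adjoint of $\mathfrak{so}(5) = \mathfrak{sp}(2)$ together with the trivial affine-extension summand. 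I expect the reindexing of the many shift labels $;j,k$ to be the most error-prone part, but it is purely mechanical once the vanishing of $\scS(q_1)$ has excised the five-dimensional block.
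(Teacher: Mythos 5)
Your proposal is correct and follows essentially the same route as the paper, which obtains the proposition by direct substitution of $\underline{x}=(x,xq_1)$ into the displayed generic $w=(0,2)$ $qq$-character: the block with overall coefficient $\scS(x_2/x_1)=\scS(q_1)=0$ drops out (including the nested term, since $\scS(q_1)\,\scS(q_1^{-1}q_2^{-1})$ is an honest zero), and the remaining eleven monomials are reindexed via $\sY_{i,xq_1;j,k}=\sY_{i,x;j+1,k}$, with the cancellation $\sY_{2,x;2,1}/\sY_{2,x;2,1}$ producing the $\scS_2(q_1)\,\sY_{1,x;2,1}/\sY_{1,x;3,2}$ term. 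Your additional checks (no pole collisions, $\scS_2(q_1)\neq 0$, closure under the reflection flow) are consistent with, and slightly more explicit than, what the paper records.
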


\subsubsection*{Classical limit}

In the classical limit, the $\scS$-factors appearing in the $qq$-character $\sT[\mathbf{W}_{2_1,x}^{(2)}]$ are given by $\scS\qty(q_1^{-3}q_2^{-1}) \xrightarrow{q_1, q_2 \to 1} 1$ and 
\begin{align}
    \scS_2(q_1) = - q_1^{-1} \frac{1 - q_1 q_2^{-1}}{1 - q_1^{-1} q_2^{-1}}
    \ \longrightarrow \
    \begin{cases}
     -1 & (q_1 \to 1) \\ 1 & (q_2 \to 1)
    \end{cases}
\end{align}
Therefore, we obtain the following reductions,
\begin{subequations}
\begin{align}
    \sT[\mathbf{W}_{2_1,x}^{(2)}]
    \xrightarrow{q_1 \to 1} \ &
    \sY_{2,x}^2 + \frac{\sY_{1,x;0,1}^2}{\sY_{2,x;0,1}^2} + \frac{\sY_{2,x;0,1}^2}{\sY_{1,x;0,2}^2} + \frac{1}{\sY_{2,x;0,2}^2}
    \nonumber \\ &
    + 2 \qty(
    \frac{\sY_{1,x;0,1} \sY_{2,x}}{\sY_{2,x;0,1}} + \frac{\sY_{2,x;0,1} \sY_{2,x}}{\sY_{1,x;0,2}} + \frac{\sY_{2,x}}{\sY_{2,x;0,2}} + \frac{\sY_{1,x;0,1}}{\sY_{2,x;0,2} \sY_{2,x;0,1}} + \frac{\sY_{2,x;0,1}}{\sY_{1,x;0,2} \sY_{2,x;0,2}}
    + \frac{\sY_{1,x;0,1}}{\sY_{1,x;0,2}}
    )
    \nonumber \\ &
    = \qty(\sY_{2,x} + \frac{\sY_{1,x;0,1}}{\sY_{2,x;0,1}} + \frac{\sY_{2,x;0,1}}{\sY_{1,x;0,2}} + \frac{1}{\sY_{2,x;0,2}})^2
    \, , \\[.5em]
    \xrightarrow{q_2 \to 1} \ &
    \sY_{2,x} \sY_{2,x;1,0} + \frac{\sY_{1,x;1,0} \sY_{1,x;2,0}}{\sY_{2,x;1,0} \sY_{2,x;2,0}} + \frac{\sY_{2,x;2,0} \sY_{2,x;3,0}}{\sY_{1,x;3,0} \sY_{1,x;4,0}} + \frac{1}{\sY_{2,x;3,0} \sY_{2,x;4,0}} + \frac{\sY_{1,x;2,0} \sY_{2,x}}{\sY_{2,x;2,0}}
    \nonumber \\ &
    + \frac{\sY_{2,x;3,0} \sY_{2,x}}{\sY_{1,x;4,0}} + \frac{\sY_{2,x}}{\sY_{2,x;4,0}} + \frac{\sY_{1,x;1,0}}{\sY_{2,x;4,0} \sY_{2,x;1,0}} + \frac{\sY_{2,x;2,0}}{\sY_{1,x;3,0} \sY_{2,x;4,0}}
    % \nonumber \\ &
    + \frac{\sY_{1,x;2,0}}{\sY_{1,x;3,0}} + \frac{\sY_{1,x;1,0} \sY_{2,x;3,0}}{\sY_{1,x;4,0} \sY_{2,x;1,0}}
    \, .
\end{align}
\end{subequations}
The limit $q_1 \to 1$ is the degenerate limit, which provides the tensor product of two four-dimensional modules, while we obtain the $q$-character of the 10-dimensional module with the affine extension in the limit $q_2 \to 1$.

\subsection{$w = (1,1)$}\label{sec:BC2_11}

We consider the $qq$-character of weight $w = (1,1)$,
\begin{align}
    & \sT_{(1,1),x} = 
    \sY_{1,x_1} \sY_{2,x_2} 
    + \frac{\sY_{2,x_1} \sY_{2,x_1;1,0} \sY_{2,x_2}}{\sY_{1,x_1;2,1}}
    + \frac{\sY_{1,x_1} \sY_{1,x_2;1,1}}{\sY_{2,x_2;1,1}}
    + \frac{\sY_{2,x_1} \sY_{2,x_1;1,0} \sY_{2,x_2;2,1}}{\sY_{1,x_1;2,1} \sY_{1,x_2;3,2}}
    \nonumber \\ & 
    + \frac{\sY_{1,x_1;1,1} \sY_{1,x_2;1,1}}{\sY_{2,x_1;1,1} \sY_{2,x_1;2,1} \sY_{2,x_2;1,1}}
    + \frac{\sY_{2,x_2;2,1}}{\sY_{1,x_1;3,2} \sY_{1,x_2;3,2}}
    + \frac{\sY_{1,x_1;1,1}}{\sY_{2,x_1;1,1} \sY_{2,x_1;2,1} \sY_{2,x_2;3,2}}
    + \frac{1}{\sY_{1,x_1;3,2} \sY_{2,x_2;3,2}}
    \nonumber \\ & 
    + \scS_2\qty(\frac{x_1}{x_2}q_1^{-1} q_2^{-1}) \qty[
    \frac{\sY_{1,x_1} \sY_{2,x_2;2,1}}{\sY_{1,x_2;3,2}}
    + \frac{\sY_{1,x_1}}{\sY_{2,x_2;3,2}}
    + \frac{\sY_{2,x_1} \sY_{2,x_1;1,0}}{\sY_{1,x_1;2,1} \sY_{2,x_2;3,2}}
    ]
    + \scS_2\qty(\frac{x_2}{x_1}q_1 q_2) 
    \frac{\sY_{1,x_2;1,1} \sY_{2,x_1} \sY_{2,x_1;1,0}}{\sY_{1,x_1;2,1} \sY_{2,x_2;1,1}}
    \nonumber \\ & 
    + \scS_2\qty( \frac{x_2}{x_1} ) \qty[
    \frac{\sY_{1,x_1;1,1} \sY_{2,x_2}}{\sY_{2,x_1;1,1} \sY_{2,x_1;2,1}}
    + \frac{\sY_{2,x_2}}{\sY_{1,x_1;3,2}}
    + \frac{\sY_{1,x_2;1,1}}{\sY_{1,x_1;3,2} \sY_{2,x_2;1,1}}
    ]     
    + \scS_2\qty(\frac{x_1}{x_2}) \frac{\sY_{1,x_1;1,1} \sY_{2,x_2;2,1}}{\sY_{1,x_2;3,2} \sY_{2,x_1;1,1} \sY_{2,x_1;2,1}}
    \nonumber \\ & 
    + \scS(q_1^{-1}) \qty[
    \scS\qty(\frac{x_2}{x_1}q_1^{-1}) \frac{\sY_{2,x_1} \sY_{2,x_2}}{\sY_{2,x_1;2,1}}
    + \scS\qty(\frac{x_1}{x_2}) \qty( \frac{\sY_{1,x_2;1,1} \sY_{2,x_1}}{\sY_{2,x_1;2,1} \sY_{2,x_2;1,1}}
    + \frac{\sY_{2,x_1} \sY_{2,x_2;2,1}}{\sY_{1,x_2;3,2} \sY_{2,x_1;2,1}}
    )
    + \scS\qty(\frac{x_1}{x_2}q_1^{-2}q_2^{-1}) \frac{\sY_{2,x_1}}{\sY_{2,x_1;2,1} \sY_{2,x_2;3,2}}
    ],
\end{align}
where the iWeyl reflection flow is described in Fig.~\ref{fig:Hasse_BC2_11}.
This $qq$-character shows the decomposition of the tensor product, $4 \otimes 5 = 16 \oplus 4$.
We have
\begin{align}
    \operatorname*{Res}_{x_1 = x_2} \sT_{(1,1),x} = 0 \, , \qquad
    \operatorname*{Res}_{x_1 = x_2 q_1^3 q_2^2} \sT_{(1,1),x} = - \frac{(1-q_1^2)(1-q_2)}{1-q_1^2q_2} \sT_{(0,1),x_2 q_1^2 q_2} \, .
\end{align}
\begin{proposition}
We specialize the spectral parameters as (a) $x = (x q_1^3 q_2, x)$ and (b) $x = (x,xq_1^2)$ to obtain the $qq$-character for the 16-dimensional module: 
\begin{subequations}
\begin{align}
    \sT_{(1,1),x}
    \xrightarrow{(\text{a})} \ &
    \sY_{1,x;3,1} \sY_{2,x} 
    + \frac{\sY_{2,x} \sY_{2,x;3,1} \sY_{2,x;4,1} }{\sY_{1,x;5,2}}
    + \frac{\sY_{1,x;1,1} \sY_{1,x;3,1} }{\sY_{2,x;1,1}}
    + \frac{\sY_{2,x;2,1} \sY_{2,x;3,1} \sY_{2,x;4,1} }{\sY_{1,x;3,2} \sY_{1,x;5,2} }
    \nonumber \\ & 
    + \frac{\sY_{1,x;1,1} \sY_{1,x;4,2} }{\sY_{2,x;1,1} \sY_{2,x;4,2} \sY_{2,x;5,2} }
    + \frac{\sY_{2,x;2,1}}{\sY_{1,x;3,2} \sY_{1,x;6,3} }
    + \frac{\sY_{1,x;4,2}}{\sY_{2,x;3,2} \sY_{2,x;4,2} \sY_{2,x;5,2} }
    + \frac{1}{\sY_{1,x;6,3} \sY_{2,x;3,2}}
    \nonumber \\ & 
    + \scS_2\qty( q_1^{-3} q_2^{-1} ) \qty(
    \frac{\sY_{1,x;4,2} \sY_{2,x}}{\sY_{2,x;4,2} \sY_{2,x;5,2}}
    + \frac{\sY_{2,x}}{\sY_{1,x;6,3}}
    + \frac{\sY_{1,x;1,1}}{\sY_{1,x;6,3} \sY_{2,x;1,1}}
    )
    + \scS_2\qty(q_1^{-2}) 
    \frac{\sY_{1,x;1,1} \sY_{2,x;3,1} \sY_{2,x;4,1}}{\sY_{1,x;5,2} \sY_{2,x;1,1}}
    \nonumber \\ & 
    + \scS_2\qty(q_1^{-1}) \qty( 
    \frac{\sY_{1,x_1;4,2} \sY_{2,x;2,1}}{\sY_{1,x;3,2} \sY_{2,x;4,2} \sY_{2,x;5,2}}
    + \frac{\sY_{1,x;1,1} \sY_{2,x;3,1}}{\sY_{2,x;1,1} \sY_{2,x;5,2} }
    + \frac{\sY_{2,x;2,1} \sY_{2,x;3,1} }{\sY_{1,x;3,2} \sY_{2,x;5,2}}
    )
    + \scS(q_1^{-1}) \scS\qty(q_1^{-4}q_2^{-1}) \frac{\sY_{2,x} \sY_{2,x;3,1} }{\sY_{2,x;5,2}}
    \, ,
\end{align}
\begin{align}
    \sT_{(1,1),x} \xrightarrow{(\text{b})} \ &
    \sY_{1,x} \sY_{2,x;2,0} 
    + \frac{\sY_{2,x} \sY_{2,x;1,0} \sY_{2,x;2,0}}{\sY_{1,x;2,1}}
    + \frac{\sY_{1,x} \sY_{1,x;3,1}}{\sY_{2,x;3,1}}
    + \frac{\sY_{2,x} \sY_{2,x;1,0} \sY_{2,x;4,1}}{\sY_{1,x;2,1} \sY_{1,x;5,2}}
    \nonumber \\ & 
    + \frac{\sY_{1,x;1,1} \sY_{1,x;3,1}}{\sY_{2,x;1,1} \sY_{2,x;2,1} \sY_{2,x;3,1}}
    + \frac{\sY_{2,x;4,1}}{\sY_{1,x;3,2} \sY_{1,x;5,2}}
    + \frac{\sY_{1,x;1,1}}{\sY_{2,x;1,1} \sY_{2,x;2,1} \sY_{2,x;5,2}}
    + \frac{1}{\sY_{1,x;3,2} \sY_{2,x;5,2}}
    \nonumber \\ & 
    + \scS_2\qty(q_1^{-3} q_2^{-1}) \qty(
    \frac{\sY_{1,x} \sY_{2,x;4,1}}{\sY_{1,x;5,2}}
    + \frac{\sY_{1,x}}{\sY_{2,x;5,2}}
    + \frac{\sY_{2,x} \sY_{2,x;1,0}}{\sY_{1,x;2,1} \sY_{2,x;5,2}}
    )
    + \scS_2\qty(q_1^{-2}) \frac{\sY_{1,x;1,1} \sY_{2,x;4,1}}{\sY_{1,x;5,2} \sY_{2,x;1,1} \sY_{2,x;2,1}}
    \nonumber \\ & 
    + \scS_2\qty(q_1^{-1}) 
    \qty(
    \frac{\sY_{1,x;3,1} \sY_{2,x} \sY_{2,x;1,0}}{\sY_{1,x;2,1} \sY_{2,x;3,1}}
    + \frac{\sY_{1,x;3,1} \sY_{2,x}}{\sY_{2,x;2,1} \sY_{2,x;3,1}}
    + \frac{\sY_{2,x} \sY_{2,x;4,1}}{\sY_{1,x;5,2} \sY_{2,x;2,1}}
    )
    + \scS(q_1^{-1}) \scS\qty(q_1^{-4}q_2^{-1}) \frac{\sY_{2,x}}{\sY_{2,x;2,1} \sY_{2,x;5,2}}
    \, .
\end{align}
\end{subequations}    
\end{proposition}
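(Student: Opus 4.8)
The plan is to follow exactly the truncation mechanism already used for Theorem~\ref{thm:A_1_I} and for the weight-$(2,0)$ and $(0,2)$ propositions above: start from the explicit generic weight-$(1,1)$ character $\sT_{(1,1),\underline{x}}$ displayed just before the statement, substitute each of the two parameter choices, and show that in every case exactly four of the twenty monomials are annihilated because their $\scS$- or $\scS_2$-coefficient meets a zero, leaving the $16$-monomial character of the irreducible summand in $4\otimes 5 = 16\oplus 4$. First I would record the two relevant zero sets, $\scS(q_1)=\scS(q_2)=0$ and $\scS_2(q_1^2)=\scS_2(q_2)=0$, and then evaluate every $\scS$-argument appearing in $\sT_{(1,1),\underline{x}}$ at the specialized ratios.

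For case (a), with $\underline{x}=(xq_1^3 q_2,x)$ one has $x_1/x_2=q_1^3 q_2$ and $x_2/x_1=q_1^{-3}q_2^{-1}$. I would check that $\scS_2(x_1/x_2\,q_1^{-1}q_2^{-1})=\scS_2(q_1^2)=0$ kills the three monomials it multiplies, while $\scS(q_1^{-1})\scS(x_1/x_2\,q_1^{-2}q_2^{-1})=\scS(q_1^{-1})\scS(q_1)=0$ kills the fourth; these are precisely the monomials of the eliminated $4$-dimensional submodule. For case (b), with $\underline{x}=(x,xq_1^2)$ one has $x_1/x_2=q_1^{-2}$, $x_2/x_1=q_1^2$, and the vanishing now comes from $\scS_2(x_2/x_1)=\scS_2(q_1^2)=0$ together with $\scS(q_1^{-1})\scS(x_2/x_1\,q_1^{-1})=\scS(q_1^{-1})\scS(q_1)=0$, eliminating a different quadruple.

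The remaining step, where the real bookkeeping lives, is to bring the surviving coefficients into the canonical form displayed in the statement. Here I would use the inversion relation $\scS(z)=\scS(z^{-1}q)$ and the reflection relation $\scS_2(z)=\scS_2(z^{-1}q_1^2 q_2)$ to rewrite arguments such as $\scS_2(q_1^3 q_2)$ as $\scS_2(q_1^{-1})$ and $\scS(q_1^3 q_2)$ as $\scS(q_1^{-2})$, and the factorization $\scS_2(z)=\scS(z)\scS(zq_1^{-1})$ (the $r=2$ case of $\scS_r(z)=\prod_{s=0}^{r-1}\scS(zq_1^{-s})$) to recombine the global prefactor $\scS(q_1^{-1})$ of the last bracket with its internal single-$\scS$ factors, e.g.\ $\scS(q_1^{-1})\scS(q_1^{-2})=\scS_2(q_1^{-1})$. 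Finally I would substitute the specialized spectral parameters into each $\sY$-function via $\sY_{i,x;j,k}=\sY_{i,xq_1^j q_2^k}$ and read off the two stated expressions.

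The main obstacle is combinatorial rather than conceptual: one must confirm that, after these reflection/factorization identities, every surviving coefficient collapses to exactly one of $\scS_2(q_1^{-1})$, $\scS_2(q_1^{-2})$, $\scS_2(q_1^{-3}q_2^{-1})$, $\scS(q_1^{-1})\scS(q_1^{-4}q_2^{-1})$, and that the shifted arguments of all $\sY$-functions line up with the claimed relabeling. Since the Hasse diagram of Fig.~\ref{fig:Hasse_BC2_11} already fixes both the monomial content and the relative $\scS$-weights, this is a finite if lengthy verification rather than a new idea; the only place to be careful is the overlap where three distinct generic monomials all acquire the same coefficient $\scS_2(q_1^{-1})$ after the identities, which must be matched termwise.
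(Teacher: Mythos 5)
Your proposal is correct and follows essentially the same route as the paper: direct substitution of each specialization into the displayed generic $\sT_{(1,1),\underline{x}}$, using the zeros $\scS(q_1)=0$ and $\scS_2(q_1^2)=0$ to kill exactly four of the twenty monomials, and the inversion/reflection identities together with $\scS_2(z)=\scS(z)\scS(zq_1^{-1})$ to collapse the surviving coefficients into $\scS_2(q_1^{-1})$, $\scS_2(q_1^{-2})$, $\scS_2(q_1^{-3}q_2^{-1})$ and $\scS(q_1^{-1})\scS(q_1^{-4}q_2^{-1})$. Your count of sixteen surviving monomials is the correct one (consistent with $4\otimes 5=16\oplus 4$ stated just above the proposition), so no gap there.
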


\begin{figure}[t]
    \centering
\begin{tikzcd}[column sep=-1cm,row sep=.7cm]
 &&& \sY_{1,x_1} \sY_{2,x_2} \arrow[ld,"{1,x_1}"'] \arrow[rd,"{2,x_2}"] &&& \\
 && \displaystyle \frac{\sY_{2,x_1} \sY_{2,x_1;1,0} \sY_{2,x_2}}{\sY_{1,x_1;2,1}} \arrow[ld,"{2,x_1;1,0}"'] \arrow[rd,"{2,x_2}"] && \displaystyle \frac{\sY_{1,x_1} \sY_{1,x_2;1,1}}{\sY_{2,x_2;1,1}} \arrow[ld,"{1,x_1}"'] \arrow[rd,"{1,x_2;1,1}"] && \\
 & \displaystyle \frac{\sY_{2,x_1} \sY_{2,x_2}}{\sY_{2,x_1;2,1}} \arrow[ld,"{2,x_1}"'] \arrow[rd,"{2,x_2}"] && \displaystyle \frac{\sY_{1,x_2;1,1} \sY_{2,x_1} \sY_{2,x_1;1,0}}{\sY_{1,x_1;2,1} \sY_{2,x_2;1,1}} \arrow[ld,"{2,x_1;1,0}"'] \arrow[rd,"{1,x_2;1,1}"] && \displaystyle \frac{\sY_{1,x_1} \sY_{2,x_2;2,1}}{\sY_{1,x_2;3,2}} \arrow[ld,"{1,x_1}"'] \arrow[rd,"{2,x_2;2,1}"] & \\
 \displaystyle \frac{\sY_{1,x_1;1,1} \sY_{2,x_2}}{\sY_{2,x_1;1,1} \sY_{2,x_1;2,1}} \arrow[d,"{1,x_1;1,1}"'] \arrow[rrd,"{2,x_2}"] && \displaystyle \frac{\sY_{1,x_2;1,1} \sY_{2,x_1}}{\sY_{2,x_1;2,1} \sY_{2,x_2;1,1}} \arrow[d,"{2,x_1}"'] \arrow[rrd,"{1,x_2;1,1}"] && \displaystyle \frac{\sY_{2,x_1;1,0} \sY_{2,x_2;2,1}}{\sY_{1,x_1;2,1} \sY_{1,x_2;3,2}} \arrow[d,"{2,x_1;1,0}"'] \arrow[rrd,"{2,x_2;2,1}"] && \displaystyle \frac{\sY_{1,x_1}}{\sY_{2,x_2;3,2}} \arrow[d,"{1,x_1}"'] \\
 \displaystyle \frac{\sY_{2,x_2}}{\sY_{1,x_1;3,2}} \arrow[rd,"{2,x_2}"'] && \displaystyle \frac{\sY_{1,x_1;1,1} \sY_{1,x_2;1,1}}{\sY_{2,x_1;1,1} \sY_{2,x_1;2,1} \sY_{2,x_2;1,1}} \arrow[ld,"{1,x_1;1,1}"] \arrow[rd,"{1,x_2;1,1}"'] && \displaystyle \frac{\sY_{2,x_1} \sY_{2,x_2;2,1}}{\sY_{1,x_2;3,2} \sY_{2,x_1;2,1}} \arrow[ld,"{2,x_1}"] \arrow[rd,"{2,x_2;2,1}"'] && \displaystyle \frac{\sY_{2,x_1} \sY_{2,x_1;1,0}}{\sY_{1,x_1;2,1} \sY_{2,x_2;3,2}} \arrow[ld,"{2,x_1;1,0}"] \\
 & \displaystyle \frac{\sY_{1,x_2;1,1}}{\sY_{1,x_1;3,2} \sY_{2,x_2;1,1}} \arrow[rd,"{1,x_2;1,1}"'] && \displaystyle \frac{\sY_{1,x_1;1,1} \sY_{2,x_2;2,1}}{\sY_{1,x_2;3,2} \sY_{2,x_1;1,1} \sY_{2,x_1;2,1}} \arrow[ld,"{1,x_1;1,1}"] \arrow[rd,"{2,x_2;2,1}"'] && \displaystyle \frac{\sY_{2,x_1}}{\sY_{2,x_1;2,1} \sY_{2,x_2;3,2}} \arrow[ld,"{2,x_1}"] & \\
 && \displaystyle \frac{\sY_{2,x_2;2,1}}{\sY_{1,x_1;3,2} \sY_{1,x_2;3,2}} \arrow[rd,"{2,x_2;2,1}"'] && \displaystyle \frac{\sY_{1,x_1;1,1}}{\sY_{2,x_1;1,1} \sY_{2,x_1;2,1} \sY_{2,x_2;3,2}} \arrow[ld,"{1,x_1;1,1}"] && \\
 &&& \displaystyle \frac{1}{\sY_{1,x_1;3,2} \sY_{2,x_2;3,2}} &&&
\end{tikzcd}
    \caption{Hasse diagram for the iWeyl reflection flow of the weight $w=(1,1)$ for $B_2/C_2$ quiver.}
    \label{fig:Hasse_BC2_11}
\end{figure}

\subsubsection*{Classical limit}

We then study the classical limit of the $qq$-character for the 16-dimensional module.
In the limit $q_2 \to 1$, we obtain the corresponding $q$-character, while we observe the tensor product structure in the degenerate limit $q_1 \to 1$.
\begin{itemize}
    \item[(a)] $x = (xq_1^3q_2, x)$
\begin{subequations}
\begin{align}
    (q_1 \to 1) : \quad & 
    \sY_{1,x;0,1} \sY_{2,x} 
    + \frac{\sY_{2,x} \sY_{2,x;0,1}^2}{\sY_{1,x;0,2}}
    + \frac{\sY_{1,x;0,1}^2 }{\sY_{2,x;0,1}}
    + \frac{\sY_{2,x;0,1}^3}{\sY_{1,x;0,2} \sY_{1,x;0,2} }
    + \frac{\sY_{1,x;0,1} \sY_{1,x;0,2} }{\sY_{2,x;0,1} \sY_{2,x;0,2}^2}
    + \frac{\sY_{2,x;0,1}}{\sY_{1,x;0,2} \sY_{1,x;0,3} }
    \nonumber \\ & 
    + \frac{\sY_{1,x;0,2}}{\sY_{2,x;0,2}^3 }
    + \frac{1}{\sY_{1,x;0,3} \sY_{2,x;0,2}}
    + \frac{\sY_{1,x;0,2} \sY_{2,x}}{\sY_{2,x;0,2}^2}
    + \frac{\sY_{2,x}}{\sY_{1,x;0,3}}
    + \frac{\sY_{1,x;0,1}}{\sY_{1,x;0,3} \sY_{2,x;0,1}}
    \nonumber \\ & 
    + 2 \qty(
    \frac{\sY_{1,x;0,1} \sY_{2,x;0,1}}{\sY_{1,x;0,2}}
    + \frac{\sY_{2,x} \sY_{2,x;0,1} }{\sY_{2,x;0,2}}
    )
%    \nonumber \\ & 
    + 3 \qty( 
    \frac{\sY_{1,x_1;0,2} \sY_{2,x;0,1}}{\sY_{1,x;0,2} \sY_{2,x;0,2}^2}
    + \frac{\sY_{1,x;0,1}}{\sY_{2,x;0,2} }
    + \frac{\sY_{2,x;0,1}^2 }{\sY_{1,x;0,2} \sY_{2,x;0,2}}
    )
    \nonumber \\ & 
    = \qty( \sY_{1,x;0,1} 
    + \frac{\sY_{2,x;0,1}^2}{\sY_{1,x;0,2}} 
    + 2 \frac{\sY_{2,x;0,1}}{\sY_{2,x;0,2}} 
    + \frac{\sY_{1,x;0,2}}{\sY_{2,x;0,2}^2} 
    + \frac{1}{\sY_{1,x;0,3}} ) 
    \qty( \sY_{2,x} + \frac{\sY_{1,x;0,1}}{\sY_{2,x;0,1}} + \frac{\sY_{2,x;0,1}}{\sY_{1,x;0,2}} + \frac{1}{\sY_{2,x;0,2}} )
    \, , 
\end{align}
\begin{align}
    (q_2 \to 1) : \quad & 
    \sY_{1,x;3,0} \sY_{2,x} 
    + \frac{\sY_{2,x;3,0} \sY_{2,x;4,0} \sY_{2,x}}{\sY_{1,x;5,0}}
    + \frac{\sY_{1,x;3,0} \sY_{1,x;1,0}}{\sY_{2,x;1,0}}
    + \frac{\sY_{2,x;3,0} \sY_{2,x;4,0} \sY_{2,x;2,0}}{\sY_{1,x;5,0} \sY_{1,x;3,0}}
    \nonumber \\ & 
    + \frac{\sY_{1,x;4,0} \sY_{1,x;1,0}}{\sY_{2,x;4,0} \sY_{2,x;5,0} \sY_{2,x;1,0}}
    + \frac{\sY_{2,x;2,0}}{\sY_{1,x;6,0} \sY_{1,x;3,0}}
    + \frac{\sY_{1,x;4,0}}{\sY_{2,x;4,0} \sY_{2,x;5,0} \sY_{2,x;3,0}}
    + \frac{1}{\sY_{1,x;6,0} \sY_{2,x;3,0}}
    \nonumber \\ & 
    + \frac{\sY_{1,x;4,0} \sY_{2,x}}{\sY_{2,x;4,0} \sY_{2,x;5,0}}
    + \frac{\sY_{2,x}}{\sY_{1,x;5,0}}
    + \frac{\sY_{1,x;1,0}}{\sY_{1,x;6,0} \sY_{2,x;1,0}}
    + \frac{\sY_{1,x;1,0} \sY_{2,x;3,0} \sY_{2,x;4,0}}{\sY_{1,x;5,0} \sY_{2,x;1,0}}
    \nonumber \\ & 
    + \frac{\sY_{1,x;4,0} \sY_{2,x;2,0}}{\sY_{1,x;3,0} \sY_{2,x;4,0} \sY_{2,x;5,0}}
    + \frac{\sY_{1,x;1,0} \sY_{2,x;3,0}}{\sY_{2,x;5,0} \sY_{2,x;1,0}}
    + \frac{\sY_{2,x;3,0} \sY_{2,x;2,0}}{\sY_{1,x;3,0} \sY_{2,x;5,0}}
    + \frac{\sY_{2,x;3,0} \sY_{2,x}}{\sY_{2,x;5,0}}
\end{align}
\end{subequations}
    \item[(b)] $x = (x, xq_1^2)$
\begin{subequations}
\begin{align}
    (q_1 \to 1) : \quad &
    \sY_{1,x} \sY_{2,x} 
    + \frac{\sY_{2,x}^3}{\sY_{1,x;0,1}}
    + \frac{\sY_{1,x} \sY_{1,x;0,1}}{\sY_{2,x;0,1}}
    + \frac{\sY_{2,x}^2 \sY_{2,x;0,1}}{\sY_{1,x;0,1} \sY_{1,x;0,2}}
    + \frac{\sY_{1,x;0,1}^2}{\sY_{2,x;0,1}^3}
    + \frac{\sY_{2,x;0,1}}{\sY_{1,x;0,2}^2}
    \nonumber \\ & 
    + \frac{\sY_{1,x;0,1}}{\sY_{2,x;0,1}^2}
    + \frac{1}{\sY_{1,x;0,2}^2}
    %\nonumber \\ & 
    + \frac{\sY_{1,x} \sY_{2,x;0,1}}{\sY_{1,x;0,2}}
    + \frac{\sY_{1,x}}{\sY_{2,x;0,2}}
    + \frac{\sY_{2,x}^2}{\sY_{1,x;0,1} \sY_{2,x;0,2}}
    \nonumber \\ & 
    + 2 \qty( \frac{\sY_{1,x;0,1}}{\sY_{1,x;0,2} \sY_{2,x;0,1}} + \frac{\sY_{2,x}}{\sY_{2,x;0,1} \sY_{2,x;0,2}} )
    + 3
    \qty(
    \frac{\sY_{2,x}^2}{\sY_{2,x;0,1}}
    + \frac{\sY_{1,x;0,1} \sY_{2,x}}{\sY_{2,x;0,1}^2}
    + \frac{\sY_{2,x}}{\sY_{1,x;0,2}}
    )
    \nonumber \\ &
    = \qty( \sY_{1,x} 
    + \frac{\sY_{2,x}^2}{\sY_{1,x;0,1}} 
    + 2 \frac{\sY_{2,x}}{\sY_{2,x;0,1}} 
    + \frac{\sY_{1,x;0,1}}{\sY_{2,x;0,1}^2} 
    + \frac{1}{\sY_{1,x;0,2}} ) 
    \qty( \sY_{2,x} + \frac{\sY_{1,x;0,1}}{\sY_{2,x;0,1}} + \frac{\sY_{2,x;0,1}}{\sY_{1,x;0,2}} + \frac{1}{\sY_{2,x;0,2}} )
\end{align}
\begin{align}
    (q_2 \to 1) : \quad &
    \sY_{1,x} \sY_{2,x;2,0} 
    + \frac{\sY_{2,x} \sY_{2,x;1,0} \sY_{2,x;2,0}}{\sY_{1,x;2,0}}
    + \frac{\sY_{1,x} \sY_{1,x;3,0}}{\sY_{2,x;3,0}}
    + \frac{\sY_{2,x} \sY_{2,x;1,0} \sY_{2,x;4,0}}{\sY_{1,x;2,0} \sY_{1,x;5,0}}
    \nonumber \\ & 
    + \frac{\sY_{1,x;1,0} \sY_{1,x;3,0}}{\sY_{2,x;1,0} \sY_{2,x;2,0} \sY_{2,x;3,0}}
    + \frac{\sY_{2,x;4,0}}{\sY_{1,x;3,0} \sY_{1,x;5,0}}
    + \frac{\sY_{1,x;1,0}}{\sY_{2,x;1,0} \sY_{2,x;2,0} \sY_{2,x;5,0}}
    + \frac{1}{\sY_{1,x;3,0} \sY_{2,x;5,0}}
    \nonumber \\ & 
    + \frac{\sY_{1,x} \sY_{2,x;4,0}}{\sY_{1,x;5,0}}
    + \frac{\sY_{1,x}}{\sY_{2,x;5,0}}
    + \frac{\sY_{2,x} \sY_{2,x;1,0}}{\sY_{1,x;2,0} \sY_{2,x;5,0}}
    + \frac{\sY_{1,x;1,0} \sY_{2,x;4,0}}{\sY_{1,x;5,0} \sY_{2,x;1,0} \sY_{2,x;2,0}}
    \nonumber \\ & 
    + \frac{\sY_{1,x;3,0} \sY_{2,x} \sY_{2,x;1,0}}{\sY_{1,x;2,0} \sY_{2,x;3,0}}
    + \frac{\sY_{1,x;3,0} \sY_{2,x}}{\sY_{2,x;2,0} \sY_{2,x;3,0}}
    + \frac{\sY_{2,x} \sY_{2,x;4,0}}{\sY_{1,x;5,0} \sY_{2,x;2,0}}
    + \frac{\sY_{2,x}}{\sY_{2,x;2,0} \sY_{2,x;5,0}}
\end{align}
\end{subequations}
\end{itemize}

%\section{$G_2$ quiver}

\section{$\widehat{A}_0$ quiver}\label{sec:A_0^}

We consider $\widehat{A}_0$ quiver, which is the simplest affine quiver.
In this case, there exists a single node with the loop edge, i.e., the Jordan quiver.
The corresponding quiver variety is identified with the Hilbert scheme of points on $\mathbb{C}^2$ for the rank-one case $(w=1)$ and the Quot scheme for the higher rank case $(w > 1)$.
The Cartan matrix is given by
\begin{align}
    c 
    = 1 + q - \mu - \mu^{-1} q 
    = (1 - \mu) (1 - \mu^{-1} q)
    = (1 - q_3) (1 - q_4)
    \, ,
\end{align}
where we define
\begin{align}
    (q_3, q_4) = (\mu, \mu^{-1} q)
    \label{eq:q34}
\end{align}
and
\begin{align}
    \underline{q} = (q_1,q_2,q_3,q_4)
    \, .
\end{align}
We assume $q_{3}, q_4 \neq 1$,%
\footnote{%
From the gauge theory point of view, supersymmetry is enhanced from 8 to 16 supercharges in the limit $q_{3,4} \to 1$.}
so that the Cartan matrix is invertible.
We remark the relation
\begin{align}
    q_1 q_2 = q_3 q_4 = q
    \, .
\end{align}
Then, we have the iWeyl reflection with a formal counting parameter $\fq \in \mathbb{C}^\times$, $|\mathfrak{q}|<1$, 
\begin{align}
    \text{iWeyl}: \quad
    \sY_x \ \longmapsto \ \fq \, \scS(q_3) \frac{\sY_{x q_3} \sY_{x q_4}}{\sY_{xq}} 
    \, .
\end{align}
We remark that $\scS(q_3) = \scS(q_4)$.
%We remark that the extra factor $\scS(q_3) = \scS(q_4)$ is specific to the case with the loop edge.

\subsection{Weight one}

Let $\mathscr{P}$ the set of partitions.
The fundamental $qq$-character of the weight $w = (1)$ is given by an infinite sum over $\mathscr{P}$~\cite{Nekrasov:2015wsu,Kimura:2015rgi}, 
\begin{align}
    \sT_{1,x} = \sum_{\lambda \in \mathscr{P}} \fq^{|\lambda|} Z_\lambda(\underline{q})
    \prod_{s \in \partial_+ \lambda} \sY_{x(s)}
    \prod_{s \in \partial_- \lambda} \sY_{x(s)q}^{-1}
    \, ,
    \label{eq:A0^_qq_ch1}
\end{align}
where $\displaystyle |\lambda| = \sum_{k=1}^\infty \lambda_k$, and we denote by $\partial_+ \lambda$ and $\partial_- \lambda$ the outer and inner corners of the partition $\lambda$, where one can add/remove a box. 
For $s = (s_1,s_2)$ and the transpose of partition denoted by $\lambda^\text{T}$, we define
\begin{align}
    x(s) = x q_3^{s_1-1} q_4^{s_2-1}
    \, , \qquad
    a(s) = \lambda_{s_2} - s_1
    \, , \qquad
    \ell(s) = \lambda^\text{T}_{s_1} - s_2
    \, .
 \label{eq:arm-leg_def}
\end{align}
Then, we have
\begin{align}
    Z_\lambda(\underline{q})
    = \prod_{s \in \lambda} \scS(q_3^{a(s) + 1} q_4^{-\ell(s)})
    = \prod_{s \in \lambda} \scS(q_3^{- a(s)} q_4^{\ell(s) + 1})
    \, .
    \label{eq:Z_A0^}
\end{align}
We remark that this factor is identified with the fixed point contribution to the Nekrasov partition function of 5d $\mathcal{N} = 1^*$ U(1) gauge theory with the $\Omega$-background parameters $(q_3,q_4)$ (not $(q_1,q_2)$) with the adjoint mass $q_1$ ($q_2$, resp.), which is geometrically interpreted as the $\chi_{q_1}$-genus ($\chi_{q_2}$-genus, resp.) of the instanton moduli space~\cite{Nekrasov:2002qd,Nekrasov:2003rj}.

The $qq$-character \eqref{eq:A0^_qq_ch1} is interpreted to be associated with the infinite-dimensional Fock module of the quantum toroidal algebra of $\mathfrak{gl}_1$~\cite{Nekrasov:2013xda,Feigin:2015raa,Feigin:2016pld} and its elliptic uplift~\cite{Konno:2021zvl}.%
\footnote{%
The $qq$-character of the MacMahon module of the quantum toroidal $\mathfrak{gl}_1$ has been also constructed~\cite{Kimura:2023bxy}. % $U_q(\doublehat{\mathfrak{gl}}_1)$ 
} 
%denoted by $U_q(\doublehat{\mathfrak{gl}}_1)$
See also \cite{Liu:2022gwf} for a related geometric representation theoretical perspective of the $qq$-character of $\widehat{A}_0$ quiver.
Similarly to finite-type quivers, we can reduce the module by tuning the parameters:
Let $(\mathsf{r},\mathsf{k})$ be non-negative integers.
We define a subset of $\mathscr{P}$ as follows,
\begin{align}
 \mathscr{P}_{\mathsf{r},\mathsf{k}} = \{ \lambda \in \mathscr{P} \mid \lambda_i - \lambda_{i+\mathsf{k}+1} \le \mathsf{r} , \, 1 \le i \le \lambda^\text{T}_1 - \mathsf{k}  \}  \, .
\end{align}
We remark that this set is similar, but different from that of $(\mathsf{r},\mathsf{k})$-admissible partitions~\cite{Feigin:2002IMRN}.

\begin{proposition}
 Under the \emph{resonance} condition,
\begin{align}
    q_3^{\mathsf{r}+1} q_4^{-\mathsf{k}} = q_i \, , \quad i = 1, 2 \, ,
    \label{eq:pit_cond1}
\end{align}
 the rational function $Z_\lambda(\underline{q})$ can be non-zero only when $\lambda \in \mathscr{P}_{\mathsf{r},\mathsf{k}}$.
\end{proposition}
 One may rewrite the condition~\eqref{eq:pit_cond1} as $q_3^{-\mathsf{r}+1} q_4^{\mathsf{k}} = q_i$ for $i = 1, 2$.
\begin{proof}
 By the definition of the rational function $Z_\lambda$ in~\eqref{eq:Z_A0^}, it vanishes when there exists $s = (i,j) \in \lambda$ such that $(a(s),\ell(s)) = (\mathsf{r},\mathsf{k})$.
 This condition is indeed equivalent to $\lambda_i - \lambda_{i + \mathsf{k} + 1} \ge \mathsf{r} + 1$. 
\begin{align}
 \lambda = 
 \begin{tikzpicture}[baseline=(current bounding box.center),scale=.5]
  \draw[latex-latex] (13,-9) node [right] {$j$} -- (-1.5,-9) -- (-1.5,1.5) node [left] {$i$};
  \draw (0,0) -- ++(2,0) -- ++(0,-2);
  \draw (1,0) -- ++(0,-2);
  \draw (3,-1) -- ++(0,-1);
  \draw (4,-1) -- ++(0,-2);
  \draw (4,-4) -- ++(0,-1);
  \draw (5,-1) -- ++(0,-2);
  \draw (5,-4) -- ++(0,-1);
  \draw (6,-1) -- ++(0,-1);
  \draw (0,-1) -- ++(7,0);
  \draw (0,-2) -- ++(7,0);
  \draw (3,-5) -- ++(3,0);
  \draw (3,-6) -- ++(3,0);
  \draw (9,-5) rectangle ++(1,-1);
  \draw (8,-5) -- ++(1,0);
  \draw (8,-6) -- ++(1,0);
  \draw (10,-6) -- ++(1,0);
  \filldraw[draw=black,fill=black!20] (4,-5) rectangle ++(1,-1);
  \begin{scope}[dotted]
   \draw (2,0) -- ++(2,0) -- ++(0,-1);
   \draw (3,0) -- ++(0,-1);
   \draw (4,-3) -- ++(0,-1);
   \draw (5,-3) -- ++(0,-1);
   \draw (2,-5) -- ++(1,0);
   \draw (2,-6) -- ++(1,0);
   \draw (6,-5) -- ++(2,0);
   \draw (6,-6) -- ++(2,0);
   \draw (7,-1) -- ++(1,0);
   \draw (7,-2) -- ++(1,0);
  \draw (11,-6) -- ++(1,0);
  \end{scope}
  \begin{scope}[-latex,thick]
   \draw (4.5,-5) -- ++(0,4); %node [above] {$\ell(s) = \mathsf{k}-1$};
   \draw (5,-5.5) -- ++(5,0) node [right] {$\lambda_i$}; %node [right] {$a(s) = \mathsf{r}-1$};;
  \end{scope}
  \node at (1.5,.5) {$\lambda_{i+\mathsf{k}+1}$};
  \node at (4.5,-5.5) {$s$};
  \node at (7,-6.5) {$a(s) = \mathsf{r}$};
  \node at (7,-3.5) {$\ell(s) = \mathsf{k}$};
 \end{tikzpicture}
\end{align}
 Hence $Z_\lambda = 0$ for any $\lambda \not \in \mathscr{P}_{\mathsf{r},\mathsf{k}}$.
 By the definition~\eqref{eq:arm-leg_def}, we have $\ell(s) = \lambda^\text{T}_j - i = \mathsf{k}$, and the range of the index $i$ is given by $1 \le i = \lambda^\text{T}_j - \mathsf{k} \le \lambda^\text{T}_1 - \mathsf{k}$.
\end{proof}

By this Proposition, the infinite sum \eqref{eq:A0^_qq_ch1} is restricted to the subset ${\mathscr{P}}_{\mathsf{r},\mathsf{k}}$ under the resonance condition~\eqref{eq:pit_cond1}.
If we instead consider the condition $q_3^{\mathsf{r}+1} q_4^{-\mathsf{k}}=1$, $Z_\lambda$ is singular for $\lambda \in \mathscr{P}_{\mathsf{r},\mathsf{k}}$. % which was a motivation to introduce $(\mathsf{r},\mathsf{k})$-admissible partitions~\cite{Feigin:2002IMRN}.
We remark that these are not a specialization of the spectral parameter.

%Hence, the infinite sum \eqref{eq:A0^_qq_ch1} excludes such a configuration that contains a box of $(a(s),\ell(s)) = (\textbf{i},\textbf{j})$. %the configuration such that $(\textbf{i},\textbf{j}) \in \lambda$.
%This is called the \emph{pit} condition at $(\textbf{i},\textbf{j}) \in \mathbb{Z}_{>0}^2$~\cite{Bershtein:2018SM}.

\subsection{General weight}\label{sec:gl1_gen}

We then consider the $qq$-character of $\widehat{A}_0$ quiver for general weight $w$.
In this case, the $qq$-character is given by a summation over $w$-tuple partitions $\lambda = (\lambda_\alpha)_{\alpha = 1,\ldots,w}$ with the spectral parameters $x = (x_\alpha)_{\alpha = 1,\ldots,w}$,
\begin{align}
    \sT_{w,x} = \sum_{\lambda} \fq^{|\lambda|} Z_{\lambda}(\underline{q};x) 
    \prod_{\alpha = 1}^w \qty[ \prod_{s \in \partial_+ \lambda_\alpha} \sY_{x_\alpha(s)}
    \prod_{s \in \partial_- \lambda_\alpha} \sY_{x_\alpha(s)q}^{-1} ]
    \label{eq:A0^_qq_ch2}
\end{align}
where $\displaystyle |\lambda| = \sum_{\alpha=1}^w |\lambda_\alpha|$, and we define
\begin{align}
    Z_{\lambda}(\underline{q};x) = \prod_{\alpha=1}^w Z_{\lambda_\alpha}(\underline{q})
    \prod_{1 \le \alpha < \beta \le w} \qty[
    \prod_{s \in \lambda_\alpha} \scS\qty( \frac{x_\beta}{x_\alpha} q_3^{a_\beta(s) + 1} q_4^{-\ell_\alpha(s)})
    \prod_{s \in \lambda_\beta} \scS\qty( \frac{x_\beta}{x_\alpha} q_3^{-a_\alpha(s) } q_4^{\ell_\beta(s)+1})
    ]
\end{align}
with
\begin{align}
    a_\alpha(s) = \lambda_{\alpha,s_2} - s_1
    \, , \qquad
    \ell_\alpha(s) = \lambda^\text{T}_{\alpha,s_1} - s_2
    \, .
    \label{eq:arm_leg2}
\end{align}
We remark 
\begin{align}
    \scS\qty( \frac{x_\beta}{x_\alpha} q_3^{a_\beta(s) + 1} q_4^{-\ell_\alpha(s)})
    = \scS\qty( \frac{x_\alpha}{x_\beta} q_3^{-a_\beta(s)} q_4^{\ell_\alpha(s)+1})
    \, , \quad
    \scS\qty( \frac{x_\beta}{x_\alpha} q_3^{-a_\alpha(s) } q_4^{\ell_\beta(s)+1})
    = \scS\qty( \frac{x_\alpha}{x_\beta} q_3^{a_\alpha(s) +1} q_4^{-\ell_\beta(s)})
    \, .
\end{align}
The factor $Z_{\lambda}(\underline{q};x)$ coincides with the fixed point contribution to the Nekrasov partition function of 5d $\mathcal{N} = 1^*$ U($w$) gauge theory under identification of the spectral parameters with the Coulomb moduli.
From the representation theoretical point of view, the weight $w$ character is associated with the $w$-tensor product of the Fock modules.
In this context, the spectral parameters $(x_\alpha)_{\alpha = 1,\ldots,w}$ are identified with the evaluation parameters.

We may consider the same condition as the weight one case \eqref{eq:pit_cond1}, which imposes the same condition for all the partitions $(\lambda_\alpha)_{\alpha = 1,\ldots, w}$.
For the tensor product of the Fock modules, %there exists a family of the irreducible representations obtained by specialization of the spectral parameters $x = (x_\alpha)_{\alpha = 1,\ldots,w}$~\cite{Feigin:2010qea}.
we consider the resonance condition
\begin{align}
    \frac{x_{\beta}}{x_\alpha} q_3^{\mathsf{r}+1} q_4^{-\mathsf{k}} = q_i \, , \quad i = 1, 2 \, .
\end{align}
In this case, we obtain the following restriction on the set of partitions,
\begin{align}
    \lambda_{\beta,i} - \lambda_{\alpha,i+\mathsf{k}+1} \le \mathsf{r} 
    \, , \quad 1 \le i \le \lambda_{\alpha,1}^\text{T} - \mathsf{k} \, . \label{eq:Burge'}
\end{align}
%The resulting modules are irreducible as shown in~\cite{Feigin:2010qea}.
%we have the following.
%\begin{proposition}\label{prop:Z_1}
%    Conjecture~\ref{conj:gen} holds for the tensor product module of the Fock modules associated with $\widehat{A}_0$ quiver.
%\end{proposition}
A similar restriction, known as the Burge condition~\cite{Burge:1993JCTA}, has been discussed also in the relation to the plane partitions~\cite{Gessel:1997TAMS}, the Higgsing of 5d $\mathcal{N}=1^*$ theory~\cite{Chen:2012we}, and the minimal models of W-algebras~\cite{Bershtein:2014qma,Alkalaev:2014sma,Belavin:2015ria}. 
However, its precise relation to the condition~\eqref{eq:Burge'} is not clear at this moment. 
We leave this for a future work.

\section{$\widehat{A}_{p-1}$ quiver}\label{sec:A_r^}

We study the higher rank cyclic quiver $\widehat{A}_{p-1}$, which consists of $r$ nodes.
%The corresponding quiver variety is identified with the Hilbert scheme of points on $\mathbb{C}^2/\mathbb{Z}_r$ for the rank-one case $(w=1)$ and the associated Quot scheme for the higher rank case $(w > 1)$.
In this case, we consider the Cartan matrix given by
\begin{align}
    c = 
    \begin{pmatrix}
    1 + q & -\nu_1 & & & - \nu_0^{-1} q \\
    - \nu_1^{-1} q & 1 + q & - \nu_2 & & \\
    & - \nu_2^{-1} q & 1 + q & \ddots & \\
    &&\ddots& \ddots & - \nu_{r-1} \\
    - \nu_0 &&& - \nu_{r-1}^{-1} q & 1 + q
    \end{pmatrix}
\end{align}
having the determinant
\begin{align}
    \det c = (1 - \nu_\text{tot}) (1 - \nu_\text{tot}^{-1} q^{p})
    \, , \qquad
    \nu_\text{tot} := \prod_{i=0}^{p-1} \nu_i
    \, .
\end{align}
We assume $\nu_\text{tot} \neq 1$, $\nu_\text{tot}^{-1} q^{p} \neq 1$ in order that the Cartan matrix is invertible.
We specialize the mass parameters to $\nu_i = \mu$ for $i = 0,\ldots,p-1$ without loss of generality, so that $\nu_\text{tot} = \mu^{p}$.
We use the same notation $(q_3,q_4) = (\mu, \mu^{-1} q)$ as before \eqref{eq:q34}.
Then, the iWeyl reflection is given as follows,
\begin{align}
    \text{iWeyl} \ : \quad 
    \sY_{i,x} \ \longmapsto \ \fq_i \, \frac{\sY_{i+1,xq_3} \sY_{i-1,xq_4}}{\sY_{i,xq}}
    \, , \qquad
    i \in \mathbb{Z}/p\mathbb{Z}
    \, ,
\end{align}
where we interpret the node index periodic $i \equiv i + p$ (mod $p$).
We also denote $\mathbb{Z}_p = \mathbb{Z}/p\mathbb{Z}$.

\subsection{Weight one}

The fundamental $qq$-character associated with the $i$-th node, corresponding to the weight $w = (w_j)_{j = 0,\ldots,p-1}$ with $w_j = \delta_{i,j}$, %$w = (0,\ldots,0,\underbrace{1}_{i\text{-th}},0,\ldots,0)$, 
is again given by summation over the partition,
\begin{align}
    \sT_{i,x} = \sum_{\lambda} \underline{\fq}_i^\lambda Z^{(r)}_{\lambda}(\underline{q})
    \prod_{s \in \partial_+ \lambda} \sY_{\si(s)+i,x(s)}
    \prod_{s \in \partial_- \lambda} \sY_{\si(s)+i,x(s)q}^{-1}
    \, ,
    \label{eq:Ar^_qq_ch1}
\end{align}
where we define the coloring index $\mathsf{i}(s) \equiv s_1 - s_2$ (mod $p$) for $s = (s_1,s_2) \in \mathbb{Z}_{>0}^2$, and
\begin{align}
    \underline{\fq}_i^\lambda = \prod_{s \in \lambda} \fq_{\si(s) + i} = \prod_{j=0}^{p-1} \fq_{j+i}^{|\lambda|_j}
    \, , \qquad
    |\lambda|_i = \#\{ s \in \lambda \mid \si(s) = i \}
    \, .
\end{align}
We also define
\begin{align}
    Z^{(p)}_{\lambda}(\underline{q})
    = \prod_{s \in \lambda \mid h(s) \in p\mathbb{Z}} \scS(q_3^{a(s) + 1} q_4^{-\ell(s)})
    = \prod_{s \in \lambda \mid h(s) \in p\mathbb{Z}} \scS(q_3^{- a(s)} q_4^{\ell(s) + 1})
    \, ,
    \label{eq:Z_Ar^}
\end{align}
with the hook length
\begin{align}
    h(s) = a(s) + \ell(s) + 1
    \, .
\end{align}
In this case, the rational factor \eqref{eq:Z_Ar^} is identified with the fixed point contribution to the Nekrasov partition function of 5d $\mathcal{N}=1^*$ U(1) theory on $\mathbb{C}^2/\mathbb{Z}_p$ with the $\Omega$-background parameters $(q_3,q_4)$~\cite{Fucito:2004ry}.
The $qq$-character \eqref{eq:Ar^_qq_ch1} is then associated with the colored Fock module of the quantum toroidal $\mathfrak{gl}_p$~\cite{Jimbo:1983if,Takemura:1999PRIMS,Feigin:2013JA}. %$U_q(\doublehat{\mathfrak{gl}}_{r})$~\cite{Jimbo:1983if,Takemura:1999PRIMS,Feigin:2013JA}.

Let us discuss truncation of the configuration.
We tune the parameters as before~\eqref{eq:pit_cond1}.
In this case, the rational function~\eqref{eq:Z_Ar^} becomes zero if $s \in \lambda$ where $(a(s),\ell(s))=(\mathsf{r},\mathsf{k})$ and also $h(s) \in p \mathbb{Z}$.
The condition~\eqref{eq:pit_cond1} is rewritten in terms of the total mass $\nu_\text{tot}$ instead of $\mu$,
\begin{align}
    \nu_\text{tot}^{(\mathsf{r} + \mathsf{k} + 1)/r} q^{- \mathsf{k}} = q_{i} \, , \quad i = 1, 2 \, .
    \label{eq:pit_cond2}
\end{align}
Hence, we can exclude a configuration that contains a box $s$ of $(a(s),\ell(s))=(\mathsf{r},\mathsf{k})$ and $h(s) \in p \mathbb{Z}$ for $\widehat{A}_{p-1}$ quiver.
%impose the pit condition at $(\textbf{i},\textbf{j}) \in \mathbb{Z}_{>0}^2$ only if $\textbf{i} + \textbf{j} + 1 \in r \mathbb{Z}$ for $\widehat{A}_{r-1}$ quiver.

\subsection{General weight}

We study the case with general weight $w = (w_i)_{i = 0,\ldots,p-1} \in \mathbb{Z}_{\ge 0}^{p}$.
We define the index set $I_w = \{ (i,\alpha) \mid i = 0,\ldots,p-1, \alpha = 1,\ldots,w_i \}$ with $\displaystyle \mathsf{w} := |I_w| = \sum_{i=0}^{p-1} w_i$.
Then, the $qq$-character is given as a summation over the set of partitions $\lambda = (\lambda_{i,\alpha})_{(i,\alpha) \in I_w}$,
\begin{align}
    \sT_{w,x} = \sum_{\lambda} \underline{\fq}_w^\lambda Z_\lambda^{(p)}(\underline{q};x) \prod_{(i,\alpha) \in I_w} \qty[ \prod_{s \in \partial_+ \lambda_{i,\alpha}} \sY_{\si(s)+i,x_{i,\alpha}(s)} \prod_{s \in \partial_- \lambda_{i,\alpha}} \sY_{\si(s)+i,x_{i,\alpha}(s)q}^{-1} ]
\end{align}
where we define
\begin{align}
    \underline{\fq}_w^\lambda 
    = \prod_{(i,\alpha) \in I_w} \prod_{s \in \lambda_{i,\alpha}} \fq_{\si(s)+i}
    \, ,
\end{align}
and the rational factor
\begin{align}
    Z_\lambda^{(p)}(\underline{q};x) = \prod_{(i,\alpha)\in I_w} Z_{\lambda_{i,\alpha}}^{(p)}(\underline{q}) \prod_{(i,\alpha) < (j,\beta)} \qty[
    \prod_{\substack{s \in \lambda_{i,\alpha} \\ h^{i,\alpha}_{j,\beta}(s) \in p \mathbb{Z}}} \scS\qty( \frac{x_{j,\beta}}{x_{i,\alpha}} q_3^{a_{j,\beta}(s) + 1} q_4^{-\ell_{i,\alpha}(s)})
    \prod_{\substack{s \in \lambda_{j,\beta} \\ h_{i,\alpha}^{j,\beta}(s) \in p \mathbb{Z}}} \scS\qty( \frac{x_{j,\beta}}{x_{i,\alpha}} q_3^{-a_{i,\alpha}(s) } q_4^{\ell_{j,\beta}(s)+1})
    ]
    \label{eq:Z_Ar^_gen}
\end{align}
with
\begin{align}
    h^{i,\alpha}_{j,\beta}(s) = a_{i,\alpha}(s) + \ell_{j,\beta}(s) + 1 - (i-j)
    \, .
\end{align}
The arm and leg are similarly defined as in \eqref{eq:arm_leg2}.
In this case, this factor is identified with the fixed point contribution to the Nekrasov partition function of 5d $\mathcal{N}=1^*$ U($\mathsf{w}$) theory on $\mathbb{C}^2/\mathbb{Z}_p$.

As in the previous case (\S\ref{sec:gl1_gen}), we have two possibilities to impose the restriction on the set of partitions.
The first is the pit condition~\eqref{eq:pit_cond2} obtained from the diagonal factor $Z_{\lambda_{i,\alpha}}^{(p)}(\underline{q})$ in \eqref{eq:Z_Ar^_gen}.
The second is from the pair contribution associated with $(i,\alpha)$ and $(j,\beta)$, which yields the irreducible modules constructed from the tensor product of the Fock modules for the quantum toroidal $\mathfrak{gl}_p$~\cite{Feigin:2013JA}. % $U_q(\doublehat{\mathfrak{gl}}_{p})$
In this case, we consider the following resonance condition 
\begin{align}
    \frac{x_{j,\beta}}{x_{i,\alpha}} q_3^{\mathsf{r}+1} q_4^{-\mathsf{k}} = q_i \, , \quad i = 1, 2 \, .
    \label{eq:resonance_Zr}
\end{align}
If $\mathsf{r} + \mathsf{k} + 1 - ( i - j ) \in p \mathbb{Z}$, we have the restriction on the set of partitions,
\begin{align}
    \lambda_{j,\beta,s} - \lambda_{i,\alpha,s+\mathsf{k}+1} \le \mathsf{r} \, .
\end{align}
%The resulting modules are known to be irreducible~\cite{Feigin:2013JA}.
%, we have the following.
%this completes a proof of Conjecture~\ref{conj:gen} for $\widehat{A}_{r-1}$ quiver.
%\begin{proposition}\label{prop:Z_r}
%    Conjecture~\ref{conj:gen} holds for the tensor product module of the Fock modules associated with $\widehat{A}_{r-1}$ quiver.
%\end{proposition}

%\fi

\appendix

\section{$D_4$ quiver}\label{sec:D4}

We consider the $D_4$ quiver with the following labels:
\[
\dynkin[root radius=.8mm,edge length= 8mm,mark=o,label]{D}{4}
\]
This quiver consists of four nodes, so that we have four $\sY$-operators and four fundamental $qq$-characters constructed from the iWeyl reflection,
\begin{align}
    \text{iWeyl} : \quad
    \qty(\sY_{1,x}, \sY_{2,x}, \sY_{3,x}, \sY_{4,x})
    \ \longmapsto \
    \qty( \frac{\sY_{2,x}}{\sY_{1,xq}}, \frac{\sY_{1,xq} \sY_{3,xq} \sY_{4,xq}}{\sY_{2,xq}}, \frac{\sY_{2,x}}{\sY_{3,xq}}, \frac{\sY_{2,q}}{\sY_{4,xq}} ) 
    \, .
\end{align}
The fundamental $qq$-character associated with the node $i = 1$ is given by
\begin{align}
    \sT_{1,x} & = \sY_{1,x} + \frac{\sY_{2,x}}{\sY_{1,xq}} + \frac{\sY_{3,xq} \sY_{4,xq}}{\sY_{2,xq}} + \frac{\sY_{3,xq}}{\sY_{4,xq^2}} + \frac{\sY_{4,xq}}{\sY_{3,xq^2}} + \frac{\sY_{2,xq}}{\sY_{3,xq^2} \sY_{4,xq^2}} + \frac{\sY_{1,xq^2}}{\sY_{2,xq^2}} + \frac{1}{\sY_{1,xq^3}} \, .
\end{align}
We similarly obtain the fundamental $qq$-characters for the nodes $i = 3$, $4$, $\sT_{3,x}$ and $\sT_{4,x}$, through the permutation.
They correspond to the eight-dimensional vector, spinor, and conjugate spinor modules of $U_q(\widehat{\mathfrak{so}}_8)$.

\begin{proposition}[\cite{Nekrasov:2015wsu}]\label{prop:D4_T2+}
The fundamental $qq$-character for the node $i=2$ of $D_4$ quiver is given as follows:
\begin{align}
    \sT_{2,x} = \sT^+_{2,x} + \sT^0_{2,x} + \sT^-_{2,x}
\end{align}
where
\begin{subequations}
\begin{align}
    \sT^+_{2,x} & = \sY_{2,x} + \frac{\sY_{1,xq} \sY_{3,xq} \sY_{4,xq}}{\sY_{2,xq}} + \frac{\sY_{3,xq} \sY_{4,xq}}{\sY_{1,xq^2}} + \frac{\sY_{1,xq} \sY_{4,xq}}{\sY_{3,xq^2}} + \frac{\sY_{1,xq} \sY_{3,xq}}{\sY_{4,xq^2}} + \frac{\sY_{1,xq} \sY_{2,xq}}{\sY_{3,xq^2} \sY_{4,xq^2}} + \frac{\sY_{2,xq} \sY_{3,xq}}{\sY_{1,xq^2} \sY_{4,xq^2}} + \frac{\sY_{2,xq} \sY_{4,xq}}{\sY_{1,xq^2} \sY_{3,xq^2}} \nonumber \\
    & \quad + \frac{\sY_{1,xq} \sY_{1,xq^2}}{\sY_{2,xq^2}} + \frac{\sY_{3,xq} \sY_{3,xq^2}}{\sY_{2,xq^2}} + \frac{\sY_{4,xq} \sY_{4,xq^2}}{\sY_{2,xq^2}} + \frac{(\sY_{2,xq})^2}{\sY_{1,xq^2} \sY_{3,xq^2} \sY_{4,xq^2}}
    \, , \label{eq:D4_T2+} \\
    \sT^0_{2,x} & = \scS(q^{-1}) \qty( \frac{\sY_{1,xq}}{\sY_{1,xq^3}} + \frac{\sY_{3,xq}}{\sY_{3,xq^3}} + \frac{\sY_{4,xq}}{\sY_{4,xq^3}} ) + \left( \mathfrak{c}(q_1,q_2)  + \frac{(1-q_1)(1-q_2)}{1-q} \partial_{\log x} \log \frac{\sY_{2,xq} \sY_{2,xq^2}}{\prod_{i=1,3,4} \sY_{i,xq^2}} \right) \frac{\sY_{2,xq}}{\sY_{2,xq^2}}
    \, , \\
    \sT^-_{2,x} & = \frac{\sY_{1,xq^2} \sY_{3,xq^2} \sY_{4,xq^2}}{(\sY_{2,xq^2})^2} + \frac{\sY_{2,xq}}{\sY_{1,xq^2} \sY_{1,xq^3}} + \frac{\sY_{2,xq}}{\sY_{3,xq^2} \sY_{3,xq^3}} + \frac{\sY_{2,xq}}{\sY_{4,xq^2} \sY_{4,xq^3}} + \frac{\sY_{3,xq^2} \sY_{4,xq^2}}{\sY_{1,xq^3} \sY_{2,xq^2}} + \frac{\sY_{1,xq^2} \sY_{4,xq^2}}{\sY_{2,xq^2} \sY_{3,xq^3}} + \frac{\sY_{1,xq^2} \sY_{3,xq^2}}{\sY_{2,xq^2} \sY_{4,xq^3}} 
    \nonumber \\
    & \quad
    + \frac{\sY_{1,xq^2}}{\sY_{3,xq^3} \sY_{4,xq^3}} + \frac{\sY_{3,xq^2}}{\sY_{1,xq^3} \sY_{4,xq^3}} + \frac{\sY_{4,xq^2}}{\sY_{1,xq^3} \sY_{3,xq^3}} + \frac{\sY_{2,xq^2}}{\sY_{1,xq^3} \sY_{3,xq^3} \sY_{4,xq^3}} + \frac{1}{\sY_{2,xq^3}} \, ,
\end{align}
\end{subequations}
and 
\begin{align}
    \mathfrak{c}(q_1,q_2) = \frac{1-6q+q^2+(q_1+q_2)(1+q)}{(1-q)^2} \, .
\end{align}
\end{proposition}
This $qq$-character corresponds to the 28-dimensional module of $U_q(\widehat{\mathfrak{so}}_8)$ with the affine extension, involving the derivative terms.
%We remark that this $qq$-character is geometrically based on the equivariant K-theory construction, while the original version of Nekrasov \cite[\S7.3]{Nekrasov:2015wsu} is the cohomological one.
%See also \cite[\S7.3.4]{Kimura:2020jxl}.

\begin{proof}%[Proof of Proposition~\ref{prop:D4_T2+}]
The highest-weight monomial is given by $\sY_{2,x}$. 
Applying the iWeyl reflections, we obtain the positive part $\sT_{2,x}^+$.
In order to deal with the monomials in the second line of \eqref{eq:D4_T2+}, we should apply the step (\ref{alg:iWeyl2b}) of the iWeyl reflection algorithm:
We compute the regularized iWeyl reflection, then take the collision limit.
For the monomial $\sY_{i,xq} \sY_{i,xq^2} / \sY_{2,xq^2}$ ($i = 1, 3, 4$), we have the following reflection, 
\begin{align}
    \text{iWeyl}: \quad
     %\frac{\sY_{i,xq} \sY_{i,xq^2}}{\sY_{2,xq^2}} = 
     %\lim_{z \to 1} 
     \frac{\sY_{i,xq} \sY_{i,xzq^2}}{\sY_{2,xq^2}} 
     & \longmapsto %\lim_{z \to 1} \left( 
     \scS\left(zq\right) \frac{\sY_{i,xzq^2} \sY_{2,xq}}{\sY_{i,xq^2} \sY_{2,xq^2}} + \scS\left(z^{-1} q^{-1}\right) \frac{\sY_{i,xq} \sY_{2,xzq^2}}{\sY_{i,xzq^3} \sY_{2,xq^2}} %\right)
    \, .
\end{align}
We study the behavior of these terms in the limit $z \to 1$.
Noticing the expansion $f(xz) = f(x) - (1-z) \pdv{f(x)}{\log x} + O((1-z)^2)$, the first term is given by
\begin{align}
    \scS\left(zq\right) \frac{\sY_{i,xzq^2} \sY_{2,xq}}{\sY_{i,xq^2} \sY_{2,xq^2}} = \scS\left(z^{-1}\right) \frac{\sY_{2,xq}}{\sY_{2,xq^2}} \left( 1 - (1-z)  \frac{\partial_{\log x}\sY_{i,xq}}{\sY_{i,xq}} \right) + O(1-z) \, , \label{eq:D4_T2a}
\end{align}
while the second term is regular in the limit,
\begin{align}
    \lim_{z \to 1} \scS\left(z^{-1} q^{-1}\right) \frac{\sY_{i,xq} \sY_{2,xzq^2}}{\sY_{i,xzq^3} \sY_{2,xq^2}} = \scS\left(q^{-1}\right) \frac{\sY_{i,xq}}{\sY_{i,xzq^3}} \, . 
\end{align}
The regularization for the monomial $(\sY_{2,xq})^2/\sY_{1,xq^2} \sY_{3,xq^2} \sY_{4,xq^2}$ is given as follows,
\begin{align}
    %\text{iWeyl}: \
    %\lim_{z \to 1} 
    \frac{\sY_{2,xq} \sY_{2,xzq}}{\sY_{1,xq^2} \sY_{3,xq^2} \sY_{4,xq^2}}
    & \longmapsto
    %\lim_{z \to 1} \left[ 
    \scS\left(z\right) \frac{\sY_{2,xzq}}{\sY_{2,xq^2}} + \scS\left(z^{-1}\right) \frac{\sY_{2,xq}}{\sY_{2,xzq^2}} \prod_{i=1,3,4} \frac{\sY_{i,xzq^2}}{\sY_{i,xq^2}} 
    %\right]
    \nonumber \\
    & = %\lim_{z \to 1} 
    \Bigg[ \left( \scS\left(z\right) + \scS\left(z^{-1}\right) \right) - (1-z)  \scS\left(z\right) \frac{\partial_{\log x}\sY_{2,xq}}{\sY_{2,xq}} + (1-z) \scS\left(z^{-1} \right) \frac{\partial_{\log x}\sY_{2,xq^2}}{\sY_{2,xq^2}} \nonumber \\
    & \qquad - (1-z) \scS\left(z^{-1} \right) \sum_{i=1,3,4} \frac{\partial_{\log x}\sY_{i,xq^2}}{\sY_{i,xq^2}} \Bigg] \frac{\sY_{2,xq}}{\sY_{2,xq^2}} + O(1-z) \, . \label{eq:D4_T2b}
\end{align}
The generated terms in \eqref{eq:D4_T2a} are also obtained in \eqref{eq:D4_T2b}, so that they appear only once after the reflection.
Therefore, the regularized iWeyl reflection is in total given by
\begin{align}
    & \sum_{i=1,3,4} \frac{\sY_{i,xq} \sY_{i,xzq^2}}{\sY_{2,xq^2}} + \frac{\sY_{2,xq} \sY_{2,xzq}}{\sY_{1,xq^2} \sY_{3,xq^2} \sY_{4,xq^2}} 
    \nonumber \\
    & \longmapsto \Bigg[ \left( \scS\left(z\right) + \scS\left(z^{-1}\right) \right) - (1-z)  \scS\left(z\right) \frac{\partial_{\log x}\sY_{2,xq}}{\sY_{2,xq}} + (1-z) \scS\left(z^{-1} \right) \frac{\partial_{\log x}\sY_{2,xq^2}}{\sY_{2,xq^2}} \nonumber \\
    & \qquad - (1-z) \scS\left(z^{-1} \right) \sum_{i=1,3,4} \frac{\partial_{\log x}\sY_{i,xq^2}}{\sY_{i,xq^2}} \Bigg] \frac{\sY_{2,xq}}{\sY_{2,xq^2}} + \sum_{i=1,3,4} \scS\left(z^{-1} q^{-1}\right) \frac{\sY_{i,xq} \sY_{2,xzq^2}}{\sY_{i,xzq^3} \sY_{2,xq^2}} + O(1-z) \, .
\end{align}
Taking the limit $z \to 1$, we obtain the zero-weight part $\sT_{2,x}^0$ where
\begin{align}
    \lim_{z \to 1} \left( \scS\left(z\right) + \scS\left(z^{-1}\right) \right) = \mathfrak{c}(q_1,q_2) \, .
\end{align}
The remaining part $\sT_{2,x}^-$ is obtained by the iWeyl reflections without any regularization process.
\end{proof}

\section{Geometric construction of $qq$-character}\label{sec:geom_const}

We provide a geometric construction of the $qq$-character based on the quiver variety~\cite{Nekrasov:2015wsu,KPfractional}.
In this construction, instead of the operator $\sY$ in the algebraic construction, the so-called index functor $\scY$ for a category of vector bundles over the quiver variety plays a fundamental role.
%It has been known that this geometric construction agrees with the algebraic construction based on the commutativity of the operator version of the $qq$-character with the screening charge~\cite{Kimura:2015rgi}.

\subsection{Notation}\label{sec:notation_geom}

We introduce a notation used in this Section.
Let $\mathbf{X}$ be a vector bundle for which we denote the set of Grothendieck roots (exponentials of Chern roots) by $X = \{ x_\alpha \}_{\alpha = 1,\ldots \operatorname{rk} \mathbf{X}}$．
Then, the corresponding Chern character is given by $\operatorname{ch} \mathbf{X} = \sum_{\alpha = 1}^{\operatorname{rk} \mathbf{X}} x_\alpha = \sum_{x \in X} x$.
We denote the twisted wedge products of the vector bundle $\mathbf{X}$ by
\begin{align}
    \wedge_y \mathbf{X} = \bigoplus_{k=0}^{\operatorname{rk} \mathbf{X}} (-y)^{k} \wedge^k \mathbf{X}
    \, .
\end{align}
We simply write $\wedge \mathbf{X} = \wedge_1 \mathbf{X}$ when unambiguous.
Then, its Chern character is given by
\begin{align}
    \operatorname{ch} \wedge_y \mathbf{X} = \prod_{x \in X} ( 1 - x y)  \, .
\end{align}
Identifying $\operatorname{ch} \mathbf{X}' = x'$ for a rank-one vector bundle $\mathbf{X}'$, we have
\begin{align}
    \operatorname{ch} \wedge_{x'} \mathbf{X} = \operatorname{ch} \wedge (\mathbf{X} \otimes \mathbf{X}') \, .
\end{align}

\if0
\subsection{Index functor}
Let $\mathbf{X}$ be a vector bundle for which we denote the set of Grothendieck roots (exponentials of Chern roots) by $X = \{ x_\alpha \}_{\alpha = 1,\ldots \operatorname{rk} \mathbf{X}}$．
Then, the corresponding Chern character is given by $\operatorname{ch} \mathbf{X} = \sum_{\alpha = 1}^{\operatorname{rk} \mathbf{X}} x_\alpha = \sum_{x \in X} x$.
We denote the twisted wedge products of the vector bundle $\mathbf{X}$ by
\begin{align}
    \wedge_y \mathbf{X} = \bigoplus_{k=0}^{\operatorname{rk} \mathbf{X}} (-y)^{k} \wedge^k \mathbf{X}
    \, .
\end{align}
Then, its Chern character is given by
\begin{align}
    \operatorname{ch} \wedge_y \mathbf{X} = \prod_{\alpha = 1}^{\operatorname{rk} \mathbf{X}} ( 1 - x_\alpha y)  \, .
\end{align}
Identifying $\operatorname{ch} \mathbf{X}' = x'$ for a rank-one vector bundle $\mathbf{X}'$, we have
\begin{align}
    \operatorname{ch} \wedge_{x'} \mathbf{X} = \operatorname{ch} \wedge (\mathbf{X} \otimes \mathbf{X}') \, .
\end{align}

\rem{To be updated}
Then, we denote the index functor, which converts the additive class to the multiplicative class, by
\begin{align}
    \mathbb{I}[\mathbf{X}] = \prod_{\alpha = 1}^{\operatorname{rk} \mathbf{X}} ( 1 - x_\alpha ) = \operatorname{ch} \wedge \mathbf{X} \, .
\end{align}

We also use the twisted version of the index. 
We denote the twisted sum of the wedge products by
\begin{align}
    \wedge_y \mathbf{X} = \bigoplus_{k=0}^{\operatorname{rk} \mathbf{X}} (-y)^{k} \wedge^k \mathbf{X}
    \, .
\end{align}
For $\mathbf{X}'$ a rank-one vector bundle with $\operatorname{ch} \mathbf{X}' = x'$, we have
\begin{align}
    \mathbb{I}[\mathbf{X} \otimes \mathbf{X}'] = \prod_{\alpha = 1}^{\operatorname{rk} \mathbf{X}} ( 1 - x_\alpha x') = \operatorname{ch} \wedge_{x'} \mathbf{X} \, ,
\end{align}
which is the characteristic polynomial with respect to $X$.

\subsection{$\sY$-functor}\label{sec:Y-fnct}

For a quiver $\Gamma$, we assign a formal vector bundle to each node of $\Gamma$ \rem{a category of vector bundles over the quiver variety}, $\mathbf{Y} = (\mathbf{Y}_i)_{i \in \Gamma_0}$.
We then define the $\sY$-functors for the quiver $\Gamma$,
\begin{align}
    \sY_{i,X} 
    = \mathbb{I} \qty[ \mathbf{X} \otimes \mathbf{Y}_i ]
    = \prod_{\alpha = 1}^{\operatorname{rk} \mathbf{X}} \sY_{i,x_\alpha}
    \, ,  \qquad 
 i \in \Gamma_0 \, , \ 
 x_\alpha \in \mathbb{C}^\times
    \, .
\end{align}
Hence, each $\mathsf{Y}$-functor is the characteristic polynomial associated with $\mathbf{Y}_i$,
\begin{align}
    \mathsf{Y}_{i,x} = \operatorname{ch} \wedge_x \mathbf{Y}_i \, .
\end{align}

In the context of quiver gauge theory, this $\sY$-functor is defined from the observable sheaf $\{\mathbf{Y}_i\}$, the pullback of the inclusion map of the universal sheaf defined on the instanton moduli space~\cite{Nekrasov:2012xe,Nekrasov:2013xda}.
In particular, if $\operatorname{rk} \mathbf{X} = 1$, (logarithm of) the corresponding $\sY$-functor (hence, the vector bundle $\mathbf{Y}_i$) plays a role of the fundamental weight of the Lie algebra $\mathfrak{g}_\Gamma$ associated with the quiver $\Gamma$ denoted by $(\varpi_i)_{i \in \Gamma_0}$, 
\begin{align}
    \mathsf{Y}_{i,x} = e^{(\varpi_i,x)}
    \, .
\end{align}
In this context, the parameter $x$ is interpreted as the spectral parameter to define the corresponding evaluation module of the quantum affine algebra $U_q(\widehat{\mathfrak{g}}_\Gamma)$ for a finite-type quiver $\Gamma$.
\if0
Let $(\varpi_i)_{i \in \Gamma_0}$ be the set of the fundamental weights.
Then, we define the fundamental $\sY$-functor associated with the fundamental weight,
\begin{align}
 \sY_{i,x} = e^{(\varpi_i,x)}
 \, ,  \qquad 
 i \in \Gamma_0 \, , \ 
 x \in \mathbb{C}^\times \, .
\end{align}
In the context of quiver gauge theory, the $\sY$-functor is obtained from the observable bundle, the pullback of the universal bundle defined on the instanton moduli space~\cite{Nekrasov:2012xe,Nekrasov:2013xda},
\begin{align}
    \sY_{i,X} 
    = \mathbb{I} \qty[ \mathbf{X} \otimes \mathbf{Y}_i ]
    = \prod_{\alpha = 1}^{\operatorname{rk} \mathbf{X}} \sY_{i,x_\alpha}
    \, .
\end{align}
\fi
%where
%\begin{align}
%    \operatorname{ch} \mathbf{X} = \sum_{\alpha = 1}^{\operatorname{rk} \mathbf{X}} x_\alpha \, .
%\end{align}
%In the following, we will often use the notation
%\begin{align}
%    \sY_{i,x;j,k} = \sY_{i,x q_1^j q_2^k} \, .
%\end{align}

\subsection{$\sA$-functor}

We define a $(q_1, q_2)$-deformation of the Cartan matrix associated with the quiver $\Gamma$,
\begin{align}
    c_{ji} = (1 + q_1^{d_i} q_2) \delta_{ij} - \sum_{e:i \to j} \sum_{r=0}^{d_i/d_{ij}-1} \nu_e q_1^{rd_{ij}} - \sum_{e: j \to i} \sum_{r=0}^{d_i/d_{ij}-1} \nu_e^{-1} q_1^{(r+1)d_{ij}} q_2
    \qquad 
    (i,j \in \Gamma_0)
    \, .
    \label{eq:Cartan_matrix}
\end{align}
We may trivialize the parameters $(\nu_e)_{e \in \Gamma_1}$ except for the cyclic quiver.
Hence, we simply put $\nu_e = 1$ except in \S\ref{sec:A_0^} and \S\ref{sec:A_r^} where we discuss cyclic quivers.
Then, we define the $\sA$-functor, 
\begin{align}
    \sA_{i,X} 
    & = \mathbb{I}\left[ \sum_{j \in \Gamma_0} \left( \mathbf{X} \otimes \mathbf{Y}_j \right) c_{ji} \right]
    \nonumber \\
    & = \sY_{i,X} \sY_{i,X;d_i,1} \prod_{e:i \to j} \prod_{r=0}^{d_i/d_{ij}-1} \sY_{j,\nu_e X;rd_{ij},0}^{-1} \prod_{e:j \to i} \prod_{r=0}^{d_i/d_{ij}-1} \sY_{j,\nu_e^{-1}X;(r+1)d_{ij},1}^{-1}
    \, .
\end{align}
Compared with the vector bundle $\mathbf{Y}$, the combination appearing in the $\mathsf{A}$-functor $\mathbf{A}_i = \sum_{j \in \Gamma_0} \mathbf{Y}_j c_{ji}$ plays a role of the simple root of the corresponding quiver,
\begin{align}
    \mathsf{A}_{i,x} = e^{(\alpha_i,x)} \, .
\end{align}

\fi

\subsection{Quiver variety and geometric formula}\label{sec:quiv_var}

For a quiver $\Gamma = (\Gamma_0,\Gamma_1)$, we define the dimension vectors, $v = (v_i)_{i \in \Gamma_0}$ and $w = (w_i)_{i \in \Gamma_0}$, such that the vector spaces attached to the nodes and the corresponding framing spaces are given by $\mathbf{V}_i = \mathbb{C}^{v_i}$ and $\mathbf{W}_i = \mathbb{C}^{w_i}$ for $i\in \Gamma_0$.
We also write $\mathbf{V} = (\mathbf{V}_i)_{i \in \Gamma_0}$ and $\mathbf{W} = (\mathbf{W}_i)_{i \in \Gamma_0}$, and denote the automorphism groups by $G_v = \mathrm{GL}(\mathbf{V}) = \prod_{i \in \Gamma_0} \mathrm{GL}(\mathbf{V}_i)$ and $G_w = \mathrm{GL}(\mathbf{W}) = \prod_{i \in \Gamma_0} \mathrm{GL}(\mathbf{W}_i)$.
We denote the corresponding Cartan tori by $\mathbb{T}_\mathbf{V}$ and $\mathbb{T}_\mathbf{W}$. %$\mathbb{T}_\mathbf{V} = \prod_{i \in \Gamma_0} (\mathbb{C}^\times)^{v_i} \subset G_v$ and $\mathbb{T}_\mathbf{W} = \prod_{i \in \Gamma_0} (\mathbb{C}^\times)^{w_i} \subset G_w$.
We define the liner maps,
\begin{align}
    B_{e:i \to j} \in \operatorname{Hom}(\mathbf{V}_i, \mathbf{V}_j) \, , \quad 
    \overline{B}_{e:i \to j} \in \operatorname{Hom}(\mathbf{V}_j, \mathbf{V}_i) \, , \quad     
    I_i \in \operatorname{Hom}(\mathbf{W}_i, \mathbf{V}_i) \, , \quad 
    J_i \in \operatorname{Hom}(\mathbf{V}_i, \mathbf{W}_i) \, ,
\end{align}
and the moment map $\mu = (\mu_i)_{i \in \Gamma_0}$, where
\begin{align}
    \mu_i = I_i J_i + \sum_{e: i \to j} \overline{B}_e B_e - \sum_{e: j \to i} B_e \overline{B}_e \, .
\end{align}
Then, Nakajima's quiver variety associated with the quiver $\Gamma$ is given by the GIT quotient, $\mathfrak{M}_{w,v}^\Gamma = \mu^{-1}(0)/\!\!/\mathrm{GL}(\mathbf{V})$, under a stability condition.
See, e.g., \cite{Kirillov:2016} for details.

Let $\mathbf{Q} = \mathbf{Q}_1 \oplus \mathbf{Q}_2$ and $\mathbf{M} = \bigoplus_{e \in \Gamma_1} \mathbf{M}_e$, such that $\operatorname{ch} \mathbf{Q}_{i} = q_{i}$ for $i = 1$, $2$ % \in \mathbb{C}^\times$ 
with $q_{12} = q_1 q_2$ and $\operatorname{ch} \mathbf{M}_e = \nu_e$. % \in \mathbb{C}^\times$.
We consider the torus actions denoted by $\mathbb{T}_{\mathbf{Q}}$ and $\mathbb{T}_\mathbf{M}$ as follows,
\begin{align}
    \mathbb{T}_{\mathbf{Q}} \times \mathbb{T}_\mathbf{M} : (B_e,\overline{B}_e,I_i,J_i) \longmapsto (\nu_e q_{12}^{-1} B_e,\nu_e^{-1} \overline{B}_e, q_{12}^{-1} I_i, J_i) \, , \qquad \mu_i \longmapsto q_{12}^{-1} \mu_i \, .
\end{align}
We denote the total torus action by $\mathbb{T} = \mathbb{T}_\mathbf{V} \times \mathbb{T}_\mathbf{W} \times \mathbb{T}_{\mathbf{Q}} \times \mathbb{T}_\mathbf{M}$. 
We will abuse notation of the vector space (e.g., $\mathbf{V}$, $\mathbf{W}$) for the associated vector bundle over the quiver variety.
Then, we give the geometric definition of the $qq$-character as follows.

\begin{definition}\label{def:qq-ch}
Let $\mathbf{Y} = (\mathbf{Y}_i)_{i \in \Gamma_0}$ be a formal $\Gamma_0$-graded vector bundle over the quiver variety $\mathfrak{M}_{w,v}^\Gamma$. %as in \S\ref{sec:Y-fnct}. 
We define a virtual vector bundle $\mathbf{Y}_{w,v}^\Gamma = (\mathbf{Y}_{w,v;i}^\Gamma)_{i \in \Gamma_0} = \left(\mathbf{W}_i \ominus \bigoplus_{j \in \Gamma_0} \mathbf{c}_{ij} \otimes \mathbf{V}_j \right)_{i \in \Gamma_0}$ where $\operatorname{ch} \mathbf{c}_{ij} = c_{ij}$ and write $\mathbf{Y} \otimes \mathbf{Y}_{w,v}^\Gamma = \bigoplus_{i \in \Gamma_0} \mathbf{Y}_i \otimes \mathbf{Y}_{w,v;i}^\Gamma$.
We define the $qq$-character %for the category of vector bundles over the quiver variety 
by a $\mathbb{T}$-equivariant integral over $\mathfrak{M}_{w,v}^\Gamma$,
\begin{align}
    %\mathsf{T}_{w,x}[\mathbf{Y}] = 
    \mathsf{T}_{w,x}^{(q_1, q_2)}[\mathbf{Y}] = \sum_v %\mathfrak{q}^v 
    q_2^{\frac{1}{2} \operatorname{dim} \mathfrak{M}_{w,v}^\Gamma}\int_{\mathfrak{M}_{w,v}^\Gamma} \operatorname{ch} \wedge \left( \mathbf{Y} \otimes \mathbf{Y}_{w,v}^\Gamma \right) \cdot \operatorname{ch} \wedge_{q_2^{-1}} T^\vee\mathfrak{M}_{w,v}^\Gamma \cdot \operatorname{td}(T\mathfrak{M}_{w,v}^\Gamma) \, ,
\end{align}
%with $\mathfrak{q}^v = \prod_{i \in \Gamma_0} \mathfrak{q}_i^{v_i}$ for the the degree counting parameters, $\mathfrak{q} = (\mathfrak{q}_i)_{i \in \Gamma_0} \in (\mathbb{C}^\times)^{\operatorname{rk} \Gamma}$ and 
where we denote the Todd class by $\operatorname{td}(\cdot)$.
The dimension of the quiver variety is given by $\frac{1}{2} \operatorname{dim} \mathfrak{M}_{w,v}^\Gamma = \sum_{i \in \Gamma_0} w_i v_i - \sum_{i,j\in\Gamma_0} v_i {c}_{ij}^{[0]} v_j$. %, where we denote the quiver Cartan matrix by $c^{[0]} = ({c}_{ij}^{[0]})_{i,j\in\Gamma_0}$.
\end{definition}
For a finite-type quiver, the summation over $v$ in the definition of the $qq$-character is finite since the corresponding quiver variety becomes empty for sufficiently large $v$, while it will be a infinite series for affine quivers.
This implies a finiteness of the $qq$-character for finite-type quivers.
%See \rem{Appendix} for non-simply-laced quivers.

This integral is understood as an equivariant integral over the quiver variety, and thus we may apply the equivariant localization formula to evaluate it.
\begin{proposition}\label{prop:qq-ch_int}
    The equivariant integral over the quiver variety localizes to the contour integral on the torus $\mathbb{T}_\mathbf{V}$,
    \begin{align}\label{eq:qq-ch_int}
        & q_2^{\frac{1}{2} \operatorname{dim} \mathfrak{M}_{w,v}^\Gamma} \int_{\mathfrak{M}_{w,v}^\Gamma} \operatorname{ch} \wedge \left( \mathbf{Y} \otimes \mathbf{Y}_{w,v}^\Gamma \right) \cdot \operatorname{ch} \wedge_{q_2^{-1}} T^\vee\mathfrak{M}_{w,v}^\Gamma \cdot \operatorname{td}(T\mathfrak{M}_{w,v}^\Gamma)
        \nonumber \\
        & = \frac{\mathsf{c}_{v}}{v!} \oint_{\mathbb{T}_\mathbf{V}} \prod_{i \in \Gamma_0} \left[ \frac{\prod_{\alpha=1}^{w_i} \mathscr{Y}_{i,x_{i,\alpha}}}{\prod_{I=1}^{v_i}\mathscr{A}_{i,z_{i,I}}} \prod_{\substack{\alpha = 1,\ldots,w_i \\ I = 1,\ldots,v_i}} \mathscr{S}\left( \frac{z_{i,I}}{x_{i,\alpha}} \right) \prod_{1 \le I \neq J \le v_i} \mathscr{S}\left( \frac{z_{i,J}}{z_{i,I}} \right)^{-1} \right] \prod_{e: i \to j} \prod_{\substack{I = 1,\ldots,v_i \\ J = 1,\ldots,v_j}} \mathscr{S}\left( \nu_e \frac{z_{j,J}}{z_{i,I}} \right) [\dd{\underline{z}}] \, ,
    \end{align}
    where we define the index functors $\scY$ and $\scA$ as follows,
    \begin{align}
        \scY_{i,x} = \operatorname{ch} \wedge_x \mathbf{Y}_i \, , \qquad 
        \scA_{i,x} = \operatorname{ch} \wedge_x \left( \bigoplus_{j \in \Gamma_0} \mathbf{c}_{ij} \otimes \mathbf{Y}_j \right) = \scY_{i,x} \scY_{i,x q_{12}^{-1}} \prod_{e:i \to j} \scY_{j,\nu_e x q_{12}^{-1}}^{-1} \prod_{e:j \to i} \scY_{j,\nu_e^{-1}x}^{-1} \, ,
    \end{align}
    together with the same notation as before~\eqref{eq:int_notation}.
    In this context, we identify $v! = |\operatorname{Weyl}G_v|$.
\end{proposition}
\begin{proof}
    We have the following complex for the quiver variety,
    \begin{center}
    \begin{tikzcd}
        \displaystyle \bigoplus_{i \in \Gamma_0} \operatorname{Hom}(\mathbf{V}_i,\mathbf{V}_i) \arrow[r] & \substack{
        \displaystyle \bigoplus_{i \in \Gamma_0} \operatorname{Hom}(\mathbf{W}_i,\mathbf{V}_i) \otimes \det \mathbf{Q}^\vee \oplus \operatorname{Hom}(\mathbf{V}_i,\mathbf{W}_i) \\
        \bigoplus \\[.5em]
        \displaystyle \bigoplus_{e: i \to j} \operatorname{Hom}(\mathbf{V}_i,\mathbf{V}_j) \otimes \mathbf{M}_e \otimes \det \mathbf{Q}^\vee \oplus \operatorname{Hom}(\mathbf{V}_j,\mathbf{V}_i) \otimes \mathbf{M}_e^\vee
        } \arrow[r] & \displaystyle \bigoplus_{i \in \Gamma_0} \operatorname{Hom}(\mathbf{V}_i,\mathbf{V}_i) \otimes \det \mathbf{Q}^\vee
    \end{tikzcd}
    \end{center}
    from which we obtain the virtual tangent bundle,
    \begin{align}
        T\mathfrak{M}_{w,v}^\Gamma & = \bigoplus_{i \in \Gamma_0} \left( \det \mathbf{Q}^\vee \mathbf{V}_i \mathbf{W}_i^\vee \oplus \mathbf{W}_i \mathbf{V}_i^\vee \ominus \mathbf{V}_i \mathbf{V}_i^\vee \ominus \det \mathbf{Q}^\vee \mathbf{V}_i \mathbf{V}_i^\vee  \right) \oplus \bigoplus_{e: i \to j} \left( \mathbf{M}_e \det \mathbf{Q}^\vee \mathbf{V}_j \mathbf{V}_i^\vee \oplus \mathbf{M}_e^\vee \mathbf{V}_i \mathbf{V}_j^\vee \right) % \nonumber \\
%        & = \sum_{i \in \Gamma_0} \left( \det \mathbf{Q}^\vee \mathbf{V}_i \mathbf{W}_i^\vee + \mathbf{W}_i \mathbf{V}_i^\vee \right) - \sum_{i,j \in \Gamma_0} \mathbf{V}_j^\vee \mathbf{c}_{ji}^\vee \mathbf{V}_i 
        \, .
    \end{align}
%    \rem{a half Cartan}
    together with the corresponding equivariant Chern characters,
    \begin{align}
        \operatorname{ch} \mathbf{W}_i = \sum_{\alpha = 1}^{w_i} x_{i,\alpha} \, , \qquad 
        \operatorname{ch} \mathbf{V}_i = \sum_{I = 1}^{v_i} z_{i,I} \, , \qquad 
        \operatorname{ch} \det \mathbf{Q} = q_{12} \, .
    \end{align}
    Hence, the $\mathbf{Y}$-independent part (the equivariant $\chi_{q_2^{-1}}$-genus of the quiver variety) is given by
    \begin{align}
        & \int_{\mathfrak{M}_{w,v}^\Gamma} \operatorname{ch} \wedge_{q_2^{-1}} T^\vee\mathfrak{M}_{w,v}^\Gamma \cdot \operatorname{td}(T\mathfrak{M}_{w,v}^\Gamma) \nonumber \\
        & = \frac{1}{v!} \oint_{\mathbb{T}_\mathbf{V}} \prod_{i \in \Gamma_0} \prod_{\substack{\alpha = 1,\ldots,w_i \\ I = 1,\ldots,v_i}} 
        \frac{(1 - q_1 x_{i,\alpha}/z_{i,I})(1 - q_{2}^{-1} z_{i,I}/x_{i,\alpha})}{(1 - q_{12} x_{i,\alpha}/z_{i,I})(1 - z_{i,I}/x_{i,\alpha})} 
        % \nonumber \\ & \hspace{5em} \times 
        \prod_{i \in \Gamma_0} \frac{\prod_{I \neq J}^{v_i}(1 - z_{i,I}/z_{i,J}) \prod_{I,J}^{v_i} (1 - q_{12} z_{i,I}/z_{i,J})}{\prod_{I,J}^{v_i} (1 - q_{2}^{-1} z_{i,I}/z_{i,J}) (1 - q_{1} z_{i,I}/z_{i,J})}
        \nonumber \\
        & \hspace{3em} \times \prod_{e: i \to j} \prod_{\substack{I = 1,\ldots,v_i \\ J = 1,\ldots,v_j}} \frac{(1-\nu_e^{-1} q_{1} z_{i,I}/z_{j,J})(1 - \nu_e q_2^{-1} z_{j,J}/z_{i,I})}{(1-\nu_e^{-1} q_{12} z_{i,I}/z_{j,J})(1 - \nu_e z_{j,J}/z_{i,I})} [\dd{\underline{z}}] 
        \nonumber \\
        & = \frac{\mathsf{c}_{v}}{v!} q_2^{-\frac{1}{2} \operatorname{dim} \mathfrak{M}_{w,v}^\Gamma} \oint_{\mathbb{T}_\mathbf{V}} \prod_{i \in \Gamma_0} \left[ \prod_{\substack{\alpha = 1,\ldots,w_i \\ I = 1,\ldots,v_i}} \mathscr{S}\left( \frac{z_{i,I}}{x_{i,\alpha}} \right) \prod_{1 \le I \neq J \le v_i} \mathscr{S}\left( \frac{z_{i,J}}{z_{i,I}} \right)^{-1} \right] \prod_{e: i \to j} \prod_{\substack{I = 1,\ldots,v_i \\ J = 1,\ldots,v_j}} \mathscr{S}\left( \nu_e \frac{z_{j,J}}{z_{i,I}} \right) [\dd{\underline{z}}] \, .
    \end{align}
    Noticing $\operatorname{ch} \wedge \left( \mathbf{Y} \otimes \mathbf{Y}_{w,v}^\Gamma \right) = \prod_{i \in \Gamma_0} \left( \prod_{\alpha = 1,\ldots,w_i} \mathscr{Y}_{i,x_{i,\alpha}} \prod_{\alpha = 1,\ldots,w_i}\mathscr{A}_{i,z_{i,I}}^{-1} \right)$, we insert the $\mathscr{Y}$- and $\mathscr{A}$-functors to obtain the formula.
\end{proof}

The contour integral formula shown by Proposition~\ref{prop:qq-ch_int} coincides with that obtained in the operator formalism (Theorem~\ref{thm:qq-ch_int1}).
From the geometric point of view, each pole of the contour integral corresponds to the equivariant fixed point.
The integrand may have a higher pole, which gives rise to derivative terms.
Properly evaluating the integral~\eqref{eq:qq-ch_int}, we obtain the following.
\begin{proposition}\label{prop:qq-ch_alg_geom}
    The operatorial $qq$-character is mapped to the geometric $qq$-character under the map,
    \begin{align}
        : \prod_{i \in \Gamma_0} \left( \prod_{\alpha = 1,\ldots,w_i} \mathsf{Y}_{i,x_{i,\alpha}} \prod_{\alpha = 1,\ldots,w_i} \mathsf{A}_{i,z_{i,I}}^{-1} \right) : \ \longmapsto \
        \prod_{i \in \Gamma_0} \left( \prod_{\alpha = 1,\ldots,w_i} \mathscr{Y}_{i,x_{i,\alpha}} \prod_{\alpha = 1,\ldots,w_i}\mathscr{A}_{i,z_{i,I}}^{-1} \right) \, .
    \end{align}
\end{proposition}
In the context of the BPS/CFT correspondence, this map is naturally realized by taking an expectation value with respect to the so-called $Z$-state or the trace in the operator formalism~\cite{Kimura:2015rgi,Kimura:2016dys}.
In this case, the corresponding formal bundle is identified with the universal bundle over the instanton moduli space.

\if0
\subsection{iWeyl reflection}\label{sec:iWeyl_ref}

As shown in Proposigion~\ref{prop:qq-ch_int}, we need to evaluate the contour integral \eqref{eq:qq-ch_int} to compute the $qq$-character.
Evaluating this integral, we arrive at the following algorithm, that we call the iWeyl reflection~\cite{Nekrasov:2015wsu,Kimura:2015rgi}:
\begin{enumerate}
    \item Start with the highest-weight monomial associated with $w = (w_i)_{i \in \Gamma_0} \in \mathbb{Z}_{\ge 0}^{\operatorname{rk} \Gamma}$ with the spectral parameters $x = (x_{i,\alpha})_{i\in \Gamma_0,\alpha =1,\ldots,w_i}$,%
    \footnote{%
    For $i \in \Gamma_0$, denote by $\mathbf{W}_i$ a vector space with the Grothendieck roots $(x_{i,\alpha})_{\alpha=1,\ldots,w_i}$, which would be identified with the framing space of the quiver variety $\mathfrak{M}_{w,v}^\Gamma$.
    Then, the Drinfeld polynomials $\{ \mathsf{P}_{i,u} \}_{i \in \Gamma_0}$ associated with the finite-dimensional module of the quantum affine algebra $U_q(\widehat{\mathfrak{g}}_\Gamma)$ are given by the characteristic polynomials with respect to $\mathbf{W}_i$, $\mathsf{P}_{i,u} = \operatorname{ch} \wedge_x \mathbf{W}_i$.
    }
    \begin{align}
        \sY_{w,x} := \prod_{i \in \Gamma_0} \prod_{\alpha = 1}^{w_i} \sY_{i,x_{i,\alpha}}
        \, ,
    \end{align}
    which corresponds to the quiver variety of $v = 0$.
    \item Apply the iWeyl reflection for the $\sY$-variable to generate a monomial,
    \begin{align}
        \text{iWeyl} \ : 
        \sY_{i,x} \ \longmapsto \ \sY_{i,x} \sA_{i,x}^{-1} 
        = \sY_{i,x;d_i,1}^{-1} \prod_{e:i \to j} \prod_{r=0}^{d_i/d_{ij}-1} \sY_{j, x;rd_{ij},0} \prod_{e:j \to i} \prod_{r=0}^{d_i/d_{ij}-1} \sY_{j,x;(r+1)d_{ij},1}
        \, .
    \end{align}
    The reflection is applied to each $\sY$-variable in the numerator of each monomial.
    If the generated monomial is obtained also from other reflections, it is added only once. 
%    \end{enumerate}
    \begin{enumerate}
        \item If the monomial contains several $\sY$-variables which belong to the same node $i \in \Gamma_0$, multiply the $\scS$-functions:%
    \footnote{%
    This $\scS$-factor is interpreted as the OPE factor in the vertex operator formalism of $\mathsf{Y}$- and $\sA$-variables~\cite{Kimura:2015rgi}.
    }
    \begin{align}
        \sY_{i,x} \frac{\prod_{\alpha=1}^n \sY_{i,x_\alpha}}{\prod_{\beta=1}^m \sY_{i,x_\beta}}
        \ \longmapsto \
        \qty(
        \frac{\prod_{\alpha=1}^n \scS_{d_i}(x_\alpha/x)}{\prod_{\beta=1}^m \scS_{d_i}(x_\beta/x)}
        )
        \sY_{i,x} \sA_{i,x}^{-1} \frac{\prod_{\alpha=1}^n \sY_{i,x_\alpha}}{\prod_{\beta=1}^m \sY_{i,x_\beta}}
        \, .
    \end{align}
    \item
    If the monomial contains the factor $\qty(\sY_{i,x})^n$, it should be considered as $\displaystyle \lim_{x_\alpha \to x} \prod_{\alpha=1}^n\sY_{i,x_\alpha}$, which gives rise to derivatives of the $\sY$-variable.
    \end{enumerate}
%\begin{enumerate}
%    \setcounter{enumi}{2}
    \item If there is no $\sY$-variable in the numerator, no further reflection is applied (lowest-weight monomial).
    For finite-type quivers (The quiver Cartan matrix $c^{[0]}$ (see Definition~\ref{def:qq-ch}) is positive-definite %$\det c^{[0]} > 0$, where $c^{[0]}$ is the classical limit of the Cartan matrix~\eqref{eq:Cartan_matrix} obtained by replacing all the $\mathbb{C}^\times$ variables with one
    ), it is guaranteed that this process is terminated within finite time reflections, while the sequence of reflections are not terminated for generic quivers (affine and hyperbolic types; $c^{[0]}$ is not positive-definite).
\end{enumerate}

This is an algorithm to generate $qq$-characters geometrically defined as an equivariant integral over the corresponding quiver variety. 
The $qq$-character of the tensor product does not exhibits further decomposition for generic spectral parameters.
In order to obtain the $qq$-character of the irreducible module, we should specialize the spectral parameters $x$.
In the following, we study several examples to demonstrate how to obtain the irreducible module in the context of the $qq$-character.
\fi

%\rem{Geometry of $\widehat{A}_{r-1}$ quiver}

\if0
\section{Fractionalization of quiver variety}

In this Section, we provide a summary of the article \cite{KPfractional}, which introduces the fractionalization of quiver variety and its application to the geometric construction of $qq$-characters for the non-simply-laced algebras.

\subsection{Fractionalization of quiver}

Let $\Gamma_d = (\Gamma,d)$ be a decorated quiver.
For each node $i \in \Gamma_0$, we define a $d_i$-dimensional space by
\begin{align}
    \mathbf{D}_i := \bigoplus_{r = 0}^{d_i-1} \mathbf{Q}_1^{\otimes r} \, ,
\end{align}
where $\operatorname{ch} \mathbf{Q}_1^{\otimes r} = q_1^r$, and hence we have
\begin{align}
    \operatorname{ch} \mathbf{D}_i = \sum_{r=0}^{d_i-1} q_1^r = \frac{1 - q_1^{d_i}}{1 - q_1} \, .
\end{align}

\begin{definition}
Fractionalization of $\Gamma_d$ denoted by $\operatorname{Frac}(\Gamma_d) = (\Gamma_0^d,\Gamma_1^d)$ is defined as follows:
\begin{enumerate}[leftmargin=2cm]
    \item[1. \emph{Nodes}:] 
    Each node $i$ is replaced by $d_i$ copies so that $\Gamma_0^d = \{i_{r}\}_{i \in \Gamma_0, r = 0,\ldots,d_i-1}$.
    Correspondingly, we replace the vector spaces $(\mathbf{V},\mathbf{W})$ with $d_i$ copies,
\begin{align}
    (\mathbf{V}^d,\mathbf{W}^d) = (\mathbf{V}_{i_r},\mathbf{W}_{i_r})_{i \in \Gamma_0, r = 0, \ldots, d_i-1}
\end{align}
where
\begin{align}
    \mathbf{V}_{i_r} = \mathbf{V}_i \otimes \mathbf{Q}_{1}^{\otimes r}
    \, , \qquad
    \mathbf{W}_{i_r} = \mathbf{W}_i \otimes \mathbf{Q}_{1}^{\otimes r}
    \, .
    \label{eq:Vect_multiplication}
\end{align}
    \item [2. \emph{Edges}:]
    Let $d_{ij} = \operatorname{gcd}(d_i,d_j) = d_{ji}$. 
    For each edge $e \in \Gamma_1$ connecting node $i$ and $j$ of quiver $\Gamma$, we assign a set of edges $e \in \Gamma_1^d$ connecting $i_r$ and $j_{s}$ for each $r \equiv s$ (mod $d_{ij}$).
    The same construction applies to the edges between $\mathbf{V}$ and the framing spaces $\mathbf{W}$.
\end{enumerate}
\end{definition}

\begin{remark}
A similar structure has been recently discussed by Nakajima and Weekes~\cite{Nakajima:2019olw} for the fixed point sets for symmetrizable quivers in the context of the Coulomb branch of $\mathcal{N} = 4$ three-dimensional supersymmetric quiver gauge theory.    
\end{remark}

\begin{example}%[Rank two quivers]
The rank two quivers with $d = (d_1,d_2)$ are listed as follows:
\begin{align}
\begin{tabular*}{.9\textwidth}{@{\extracolsep{\fill}}cccccccc} \toprule
    $(d_1,d_2)$ & $(1,1)$ & $(2,1)$ & $(3,1)$ & $(4,1)$ & $(2,2)$ & $(3,2)$ & $(4,2)$ \\ 
    $d_{12} = \operatorname{gcd}(d_1,d_2)$ & 1 & 1 & 1 & 1 & 2 & 1 & 2 \\
    Dynkin diagram & \dynkin{A}{2} & \dynkin{B}{2} & \dynkin{G}{2} & \begin{tikzpicture}[baseline=-2pt, xscale = -1] \dynkin{A}[2]{2} \end{tikzpicture} & \dynkin{A}{2} & n/a & \dynkin{B}{2} \\ \midrule
    Fractionalization & \dynkin{A}{2} &
    \begin{tikzpicture}[baseline=(current  bounding box.center)]
    \draw (0,.25) -- (.5,0) -- (0,-.25);
    \filldraw [fill=white] (.5,0) circle (.8mm);
    \filldraw [fill=white] (0,.25) circle (.8mm);
    \filldraw [fill=white] (0,-.25) circle (.8mm);
    \end{tikzpicture}
    &
    \begin{tikzpicture}[baseline=(current  bounding box.center)]
    \draw (0,.3) -- (.5,0) -- (0,-.3);
    \draw (0,0) -- (.5,0);
    \filldraw [fill=white] (.5,0) circle (.8mm);
    \filldraw [fill=white] (0,.3) circle (.8mm);
    \filldraw [fill=white] (0,0) circle (.8mm);
    \filldraw [fill=white] (0,-.3) circle (.8mm);
    \end{tikzpicture}
    &
    \begin{tikzpicture}[baseline=(current  bounding box.center)]
    \draw (0,.13) -- (.5,0) -- (0,-.13);
    \draw (0,.39) -- (.5,0) -- (0,-.39);
    \filldraw [fill=white] (.5,0) circle (.8mm);
    \filldraw [fill=white] (0,.13) circle (.8mm);
    \filldraw [fill=white] (0,-.13) circle (.8mm);
    \filldraw [fill=white] (0,.39) circle (.8mm);
    \filldraw [fill=white] (0,-.39) circle (.8mm);
    \end{tikzpicture}    
    &
    \begin{tikzpicture}[baseline=(current  bounding box.center)]
    \draw (0,.2) -- ++(.5,0);
    \draw (0,-.2) -- ++(.5,0);
    \filldraw [fill=white] (.5,.2) circle (.8mm);
    \filldraw [fill=white] (.5,-.2) circle (.8mm);
    \filldraw [fill=white] (0,.2) circle (.8mm);
    \filldraw [fill=white] (0,-.2) circle (.8mm);
    \end{tikzpicture}    
    &
    \begin{tikzpicture}[baseline=(current  bounding box.center)]
    \draw (0,.3) -- (.5,.15) -- (0,0) -- (.5,-.15) -- (0,-.3);
    \draw (0,.3) -- (.5,-.15);
    \draw (0,-.3) -- (.5,.15);
    \filldraw [fill=white] (0,.3) circle (.8mm);
    \filldraw [fill=white] (0,0) circle (.8mm);
    \filldraw [fill=white] (0,-.3) circle (.8mm);
    \filldraw [fill=white] (.5,.15) circle (.8mm);
    \filldraw [fill=white] (.5,-.15) circle (.8mm);
    \end{tikzpicture}      
    &
    \begin{tikzpicture}[baseline=(current  bounding box.center)]
    \draw (0,.39) -- (.5,.15) -- (0,-.1);
    \draw (0,.1) -- (.5,-.15) -- (0,-.39);
    %\draw (0,.3) -- (.5,-.15);
    %\draw (0,-.3) -- (.5,.15);
    \filldraw [fill=white] (0,.13) circle (.8mm);
    \filldraw [fill=white] (0,-.13) circle (.8mm);
    \filldraw [fill=white] (0,.39) circle (.8mm);
    \filldraw [fill=white] (0,-.39) circle (.8mm);
    \filldraw [fill=white] (.5,.15) circle (.8mm);
    \filldraw [fill=white] (.5,-.15) circle (.8mm);
    \end{tikzpicture}   
    \rule[-6.8mm]{0mm}{15mm}
    \\
    \bottomrule
\end{tabular*}
\label{tab:rank2}
\end{align}
\end{example}

Similarly to the ordinary case, we introduce the linear maps between the vector spaces as follows,
\begin{align}\label{eq:quiv_var_base}
    B_{e:i_r \to j_{s}} \in \operatorname{Hom}(\mathbf{V}_{j_{s}},\mathbf{V}_{i_r}) \, , \quad
    \overline{B}_{e:i_{r} \to j_s} \in \operatorname{Hom}(\mathbf{V}_{i_r},\mathbf{V}_{j_{s}}) \, , \quad
    I_{i_r} \in \operatorname{Hom}(\mathbf{V}_{i_r},\mathbf{W}_{i_r}) \, , \quad
    J_{i_r} \in \operatorname{Hom}(\mathbf{W}_{i_r},\mathbf{V}_{i_r}) \, ,
\end{align}
for $i_r,j_s \in \Gamma_0^d$ together with the moment maps $\mu = (\mu_\mathbb{R},\mu_\mathbb{C}) = (\mu_{\mathbb{R},i_r},\mu_{\mathbb{C},i_r})_{i_r \in \Gamma_0^d}$,
\begin{subequations}
\begin{align}
    \mu_{\mathbb{R},i_r} & = I_{i_r} I_{i_r}^\dag - J^\dag_{i_r} J_{i_r} + \sum_{e:i_r \to j_s} \qty( \overline{B}_e \overline{B}_e^\dag - B_e^\dag B_e ) + \sum_{e:j_s \to i_r} \qty( B_e B_e^\dag - \overline{B}_e^\dag \overline{B}_e )
    \, , \\
    \mu_{\mathbb{C},i_r} & = I_{i_r} J_{i_r} + \sum_{e:i_r \to j_s} \overline{B}_e B_e - \sum_{e:j_s \to i_r} B_e \overline{B}_e
    \, .
\end{align}
\end{subequations}
We require that the linear maps \eqref{eq:quiv_var_base} obey the moment map equations,
\begin{align}
    \mu_{\mathbb{R},i_r} = \zeta_{i_r} \mathbbm{1}_{\mathbf{V}_{i_r}}
    \, \qquad
    \mu_{\mathbb{C},i_r} = 0
    \, ,
    \qquad i_r \in \Gamma_0^d
    \, .
    \label{eq:moment_map_eqs}
\end{align}
Since $(\mathbf{V}_{i_r}, \mathbf{W}_{i_r})_{r = 0, 
ldots, d_i-1}$ are the copies of the vector spaces $(\mathbf{V}_i, \mathbf{W}_i)$, the linear maps $(B_{e:i_r \to j_s}, \overline{B}_{e:i_r \to j_s}, I_{i_r}, J_{i_r})$ are also interpreted as the copies of $(B_{e:i \to j}, \overline{B}_{e:i \to j}, I_i, J_i)$, and we still have the same automorphism group as before (\S\ref{sec:quiv_var}).
Then, we define the \emph{fractional quiver variety} associated with $\operatorname{Frac}(\Gamma_d)$ as follows.
\begin{definition}
We define the fractional quiver variety for a decorated quiver $\Gamma_d$, denoted by $\mathfrak{M}_{w,v}^{\Gamma_d}$, to be the GIT quotient with respect to $\operatorname{Frac}(\Gamma_d)$,
\begin{align}
	\mathfrak{M}_{w,v}^{\Gamma_d} %:= \mathfrak{M}_{w,v}^{\operatorname{Frac}(\Gamma_d)} 
    = \mu_\mathbb{C}^{-1}(0) /\!\!/ \mathrm{GL}(\mathbf{V})
    \, ,
    \label{eq:quiv_var_def}
\end{align}
together with a stability condition with respect to the real moment map $\mu_\mathbb{R}$.
\end{definition}
\if0
\begin{remark}
The GIT quotient with a certain stability condition is isomorphic to  the hyper-K\"ahler quotient by $\mathrm{U}(\mathbf{V}) = \prod_{i \in \Gamma_0} \mathrm{U}(\mathbf{V}_i)$,
\begin{align}
    \mathfrak{M}_{\mathbf{W},\mathbf{V}}^{d,\zeta} = \mu_\mathbb{R}^{-1}(\zeta) \cap \mu_\mathbb{C}^{-1}(0) /\!\!/\!\!/ \mathrm{U}(\mathbf{V}) 
    \, ,
\end{align}
provided that $\zeta_{i_r} \neq 0$ for all $ i_r \in \Gamma_0^d$. 
See \cite{Nakajima:1999} for details.
\end{remark}
\fi

\rem{To be updated: def of $(w^d,v^d)$, not necessarily $w_{i_r} = w_i$, etc}
We may define another type of quiver varieties, which provides an alternative description of $qq$-characters.
%We have another characterization of the fractional quiver variety, which would be more practical in the geometric construction of the $qq$-character as discussed in \S\ref{sec:geom_formula}.
We consider a fractionalized quiver, $\operatorname{Frac}(\Gamma_d) = (\Gamma_0^d,\Gamma_1^d)$, but the vector spaces $(\mathbf{V}_{i_r}, \mathbf{W}_{i_r})_{r \in [d_i]}$ are generic, not the copies of $(\mathbf{V}_i, \mathbf{W}_i)$. 
The definition of the automorphism groups is correspondingly modified to be:
\begin{align}
    \mathrm{GL}(\mathbf{W}^d) 
    := \prod_{i_r \in \Gamma_0^d} \mathrm{GL}(\mathbf{V}_i)
    \, , \qquad
    \mathrm{GL}(\mathbf{V}^d) 
    := \prod_{i_r \in \Gamma_0^d} \mathrm{GL}(\mathbf{V}_i)
    \, .
    \label{eq:auto_WV_d}    
\end{align}
Similarly considering the moment maps $(\mu_\mathbb{R}, \mu_\mathbb{C})$, we define the quiver variety 
\begin{align}
    \mathfrak{M}_{w^d,v^d}^{\operatorname{Frac}(\Gamma_d)} = \mu_\mathbb{C}^{-1}(0) /\!\!/ \mathrm{GL}(\mathbf{V}^d)
    \, ,
\end{align}
with the stability condition.

\begin{proposition}
Let abelian group $\mathbb{Z}_{d} := \prod_{i \in \Gamma_0} (\mathbb{Z}/d_i\mathbb{Z})$ act by 
\begin{align}
    \mathbb{Z}_d:
    (\mathbf{V}_{i_r},\mathbf{W}_{i_r})_{i \in \Gamma_0}
    \ \longmapsto \
    (\mathbf{V}_{i_{r + s}},\mathbf{W}_{i_{r + s}})_{i \in \Gamma_0}
    \, ,
\end{align}
where $r, s \in \mathbb{Z}/d_i \mathbb{Z}$ for $i \in \Gamma_0$.
Then, fractional quiver variety is given as a $\mathbb{Z}_d$-invariant part of the quiver variety associated with the fractionalization of quiver
\begin{align}
    \mathfrak{M}_{w,v}^{\Gamma_d} = \left( \mathfrak{M}_{w^d,v^d}^{\operatorname{Frac}(\Gamma_d)} \right)_{\mathbb{Z}_d}
    \, .
%    \label{eq:quiv_var_def}
\end{align}
\end{proposition}

% We will show in \S\ref{sec:frac_gauge_th} the non-perturbative relations on the observables in gauge theory have a description in terms of the geometry of the fractional quiver variety parallel to the original construction \cite{Nekrasov:2015wsu} for the ordinary quiver variety.

\rem{To be included}
The framing space $\mathbf{W}$ plays a crucial role in the representation theory associated with a quiver.
We denote the dimension vector by $(\underline{w},\underline{v}) = (\mathsf{w}_i,\mathsf{v}_i)_{i \in \Gamma_0} = (\dim \mathbf{W}_i, \dim \mathbf{V}_i)_{i\in \Gamma_0}$.
Then, $\underline{w}$ is interpreted as multiplicity of the fundamental weights in the highest-weight of an irreducible representation constructed on the quiver $\Gamma$.
Similarly, $\underline{v}$ corresponds to the multiplicity of the simple roots, so that $(\underline{w},\underline{v})$ parametrizes the associated weight space.
We remark that the half dimension of the fractional quiver variety is written using the dimension vector,
\begin{subequations}
\begin{align}
    \frac{1}{2} \dim \mathfrak{M}_{w^d,v^d}^{\operatorname{Frac}(\Gamma_d)} & = \sum_{i \in \Gamma_0} d_i \mathsf{w}_i \mathsf{v}_i - \sum_{i,j \in \Gamma_0} \mathsf{v}_i (b_{ij}^+) \mathsf{v}_j
    \, , \label{eq:half_dim_M_b} \\
    \frac{1}{2} \dim \mathfrak{M}_{w,v}^{\Gamma_d} & = \sum_{i \in \Gamma_0} \mathsf{w}_i \mathsf{v}_i - \sum_{i,j \in \Gamma_0} \mathsf{v}_i (c_{ij}^+) \mathsf{v}_j
    \, , \label{eq:half_dim_M_c}
\end{align}
\end{subequations}
where $(b_{ij}^+,c_{ij}^+)$ is a classical limit of the Cartan matrices \eqref{eq:Cartan_mat} (degree-zero Adams operation applied).

For finite-type quivers $(\det c > 0)$ there is only a finite set of
$\underline{v}$ associated with quiver variety of non-negative
dimension corresponding to the fact that irreducible representations
of a dominant highest-weight $\underline{w}$ are
finite-dimensional. However, for affine quivers and hyperbolic quivers
the highest-weight representations are infinite-dimensional and the
dimension of the quiver variety is non-zero for infinite set of weight
spaces parametrized by $\underline{v}$.

\subsection{Fractionalization, folding, and Langlands dual}%{Cycle of fractionalization and folding, and Langlands dual}%{Folding, cycle, and Langlands dual}

Identifying the index $d_i$ with the length of the simple root associated with the node $i \in \Gamma_0$, the decorated quiver may describe the non-simply-laced algebras as discussed in \S\ref{sec:construction}.
Through this identification, the \emph{fractionalization} sends a non-simply-laced quiver to a simply-laced quiver. 
In the opposite direction, the \emph{folding} by the outer automorphism sends a simply-laced quiver to a non-simply-laced quiver, denote by $\operatorname{Fold}(\Gamma)$, which is then identified as a decorated quiver.

\begin{proposition}%[Langlands dual quiver]
Let $d_\text{max} := \operatorname{max}(d_i)_{i \in \Gamma_0}$ for a decorated quiver $\Gamma_d$.
We define $d^\vee = (d^\vee_i)_{i\in\Gamma_0}$ with $d^\vee_i := d_\text{max}/d_i \in \mathbb{Z}_{>0}$ for all the nodes $i \in \Gamma_0$.
Then, the decorated quiver $\Gamma_{d^\vee} = (\Gamma,d^\vee)$ is given by composition of fractionalization and foloding,
\begin{align}
 \Gamma_{d^\vee} = \operatorname{Fold}\circ\operatorname{Frac}(\Gamma_d) \, , 
\end{align}
which we call the Langlands dual of $\Gamma_d$.
\end{proposition}
In this context, $d_\text{max}$ and $d^\vee = (d^\vee_i)_{i \in \Gamma_0}$ play a role of the lacing number and the set of coroot lengths of the associated root system.

\begin{example}[$B$ and $C$ quivers]
$B$ and $C$ type quivers are fractionalized to $A$ and $D$, while $A$ and $D$ type quivers are $\mathbb{Z}_2$-folded to $C$ and $B$.
This gives rise to the Langlands duals, $({}^L{B_r},{}^L{C_r}) = (C_r,B_r)$:
\begin{align}
    \begin{tikzpicture}[baseline=(current  bounding box.center),thick]
    \node (A) at (0,0) [] {$A_{2r-1}$: \dynkin[fold]{A}{oo.ooooo.oo}};
    \node (B) at (-5.5,-2) [] {$B_r$: \dynkin{B}{}};
    \node (C) at (5.5,-2) [] {$C_r$: \dynkin{C}{}};
    \node (D) at (0,-4) [] {$D_{r+1}$: \dynkin[fold]{D}{}};
    \path [draw,-latex,cyan] ($(A)+(2,-.5)$) -- node [above, sloped] {fold.} ($(C)+(-2,.5)$);
    \path [draw,-latex,magenta] ($(B)+(2,.5)$) -- node [above, sloped] {frac.} ($(A)+(-2,-.5)$);
    \path [draw,-latex,magenta] ($(C)+(-2,-.5)$) -- node [below, sloped] {frac.} ($(D)+(2,.5)$);
    \path [draw,-latex,cyan] ($(D)+(-2,.5)$) -- node [below, sloped] {fold.} ($(B)+(2,-.5)$);
    \path [draw,latex-latex] ($(B)+(2,0)$) -- node [above] {Langlands dual} ($(C)+(-2,0)$);
    \end{tikzpicture}
    \label{eq:L_dual_BC}
\end{align}
\end{example}
\begin{example}[Twisted affine quiver duality]
$(G_2^{(1)}, D_4^{(3)})$ are fractionalized to $(E_6^{(1)}, D_4^{(1)})$, which are $\mathbb{Z}_3$-folded to $(D_4^{(3)},G_2^{(1)})$. Then, we have the Langlands duals, $({^L}G_2^{(1)}, {^L}D_4^{(3)}) = (D_4^{(3)},G_2^{(1)})$:
\begin{align}
    \begin{tikzpicture}[baseline=(current  bounding box.center),thick]
    \node (A) at (0,0) [] {$E_{6}^{(1)}$: \dynkin[ply=3,extended]{E}{6}};
    \node (B) at (-5.5,-2) [] {$G_2^{(1)}$: \dynkin{G}[1]{2}};
    \node (C) at (5.5,-2) [] {$D_4^{(3)}$: \dynkin{D}[3]{4}};
    \node (D) at (0,-4) [] {$D_{4}^{(1)}$: \dynkin[ply=3,extended]{D}{4}};
    \path [draw,-latex,cyan] ($(A)+(2,-.5)$) -- node [above, sloped] {fold.} ($(C)+(-2,.5)$);
    \path [draw,-latex,magenta] ($(B)+(2,.5)$) -- node [above, sloped] {frac.} ($(A)+(-2,-.5)$);
    \path [draw,-latex,magenta] ($(C)+(-2,-.5)$) -- node [below, sloped] {frac.} ($(D)+(2,.5)$);
    \path [draw,-latex,cyan] ($(D)+(-2,.5)$) -- node [below, sloped] {fold.} ($(B)+(2,-.5)$);
    \path [draw,latex-latex] ($(B)+(2,0)$) -- node [above] {Langlands dual} ($(C)+(-2,0)$);
    \end{tikzpicture}
    \label{eq:L_dual_GD}    
\end{align}
\end{example}

\if0
\begin{equation}
    \begin{tikzcd}
    & A_{2r-1} \arrow[rd,"\text{fold.}" sloped] & \\
    B_r \arrow[ru,"\text{frac.}" sloped] \arrow[rr,leftrightarrow,"\text{Langlands dual}"]& & C_r \arrow[ld,"\text{frac.}"' sloped] \\
    & D_{r-1} \arrow[lu,"\text{fold.}"' sloped] &
    \end{tikzcd}
\end{equation}
\fi

\subsection{Equivariant integral formula for $qq$-character}

We derive the contour integral formula of the $qq$-character via the equivariant integral over the fractional quiver variety introduced above.

Let $\mathbf{Q}_i = \mathbf{Q}_1^{\otimes d_i} \oplus \mathbf{Q}_2$, $\mathbf{Q}_{ij} = \mathbf{Q}_1^{\otimes d_{ij}} \oplus \mathbf{Q}_2$ and $\mathbf{M} = \bigoplus_{e \in \Gamma_1} \mathbf{M}_e$.
We write $\mathbf{Q} = \bigoplus_{i \in \Gamma_0} \mathbf{Q}_i$.
\rem{Let $t_{ij}(\cdot) \in \mathbb{Z}/d_{ij}\mathbb{Z}$ be the remainder of the division by $d_{ij}$ for $i, j \in \Gamma_0$.}
The $\mathbb{C}^\times$-action is given by \rem{To be updated}
\begin{align}
    \mathbb{T}_{\mathbf{Q}} \times \mathbb{T}_\mathbf{M} : (B_{e:i_r \to j_s},\overline{B}_{e:i_r \to j_s},I_i,J_i) & \longmapsto (\nu_e q_1^{-d_i} q_{2}^{-1} B_{e:i_r \to j_s},\nu_e^{-1} \overline{B}_{e:i_r \to j_s}, q_1^{-d_i} q_{2}^{-1} I_{i_r}, J_{i_r}) \, , 
\end{align}
which implies $\mathbb{T}_{\mathbf{Q}} \times \mathbb{T}_\mathbf{M} : \mu_{i_r} \longmapsto q_1^{-d_i} q_{2}^{-1} \mu_{i_r}$.
Following the same argument as in \S\ref{sec:quiv_var}, we obtain the tangent bundle over the fractional quiver variety, \rem{The bifundamental part to be updated}
\begin{align}
    T\mathfrak{M}_{w,v}^{\Gamma_d} 
    & = \sum_{i \in \Gamma_0} \left( \det \mathbf{Q}_i^\vee \mathbf{V}_i \mathbf{W}_i^\vee + \mathbf{W}_i \mathbf{V}_i^\vee - (1 + \det \mathbf{Q}_i^\vee) \mathbf{V}_i \mathbf{V}_i^\vee  \right) 
    \nonumber \\
    & \qquad
    + \sum_{e: i \to j} \left( \mathbf{M}_e \det \mathbf{Q}_{ij}^\vee \mathbf{V}_j \mathbf{V}_i^\vee + \mathbf{M}_e^\vee  \mathbf{V}_i \mathbf{V}_j^\vee \right) % \nonumber \\
%        & = \sum_{i \in \Gamma_0} \left( \det \mathbf{Q}^\vee \mathbf{V}_i \mathbf{W}_i^\vee + \mathbf{W}_i \mathbf{V}_i^\vee \right) - \sum_{i,j \in \Gamma_0} \mathbf{V}_j^\vee \mathbf{c}_{ji}^\vee \mathbf{V}_i 
    \, .
\end{align}
Then, we have the following integral formula, which is the fractional quiver version of the formula shown in Proposition~\ref{prop:qq-ch_int_frac}.
\begin{proposition}\label{prop:qq-ch_int_frac}
    The equivariant integral over the fractional quiver variety localizes to the contour integral on the torus $\mathbb{T}_\mathbf{V}$,
    \begin{align}\label{eq:qq-ch_int_frac}
        & q_2^{\frac{1}{2} \operatorname{dim} \mathfrak{M}_{w,v}^{\Gamma_d}} \int_{\mathfrak{M}_{w,v}^{\Gamma_d}} \operatorname{ch} \wedge \left( \mathbf{Y} \otimes \mathbf{Y}_{w,v}^{\Gamma_d} \right) \cdot \operatorname{ch} \wedge_{q_2^{-1}} T^\vee\mathfrak{M}_{w,v}^{\Gamma_d} \cdot \operatorname{td}(T\mathfrak{M}_{w,v}^{\Gamma_d})
        \nonumber \\
        & = \frac{\mathsf{c}_{v}}{v!} \oint_{\mathbb{T}_\mathbf{V}} [\dd{\underline{z}}] \prod_{i \in \Gamma_0} \left[ \frac{\prod_{\alpha=1}^{w_i} \mathscr{Y}_{i,x_{i,\alpha}}}{\prod_{I=1}^{v_i}\mathscr{A}_{i,z_{i,I}}} \prod_{\substack{\alpha = 1,\ldots,w_i \\ I = 1,\ldots,v_i}} \mathscr{S}_{d_i}\left( \frac{z_{i,I}}{x_{i,\alpha}} \right) \prod_{1 \le I \neq J \le v_i} \mathscr{S}_{d_i}\left( \frac{z_{i,J}}{z_{i,I}} \right)^{-1} \right] \prod_{e: i \to j} \prod_{\substack{I = 1,\ldots,v_i \\ J = 1,\ldots,v_j \\ r = 0,\ldots,d_j/d_{ij}}} \mathscr{S}_{d_i}\left( \nu_e q_1^{r d_{ij}} \frac{z_{j,J}}{z_{i,I}} \right) \, ,
    \end{align}
    where we apply the same notation as before~\eqref{eq:int_notation} and we define the functors $\scY$ and $\scA$ as follows,
    \begin{align}
        \scY_{i,x} = \operatorname{ch} \wedge_x \mathbf{Y}_i \, , \qquad 
        \scA_{i,x;d_i,1} = \scY_{i,x} \scY_{i,x;d_i,1} \prod_{e:i \to j} \prod_{r=0}^{d_i/d_{ij}-1} \scY_{j,\nu_e x;rd_{ij},0}^{-1} \prod_{e:j \to i} \prod_{r=0}^{d_i/d_{ij}-1} \scY_{j,\nu_e^{-1}x;(r+1)d_{ij},1}^{-1}  %\scY_{i,x} \scY_{i,x q_{12}} \prod_{e:i \to j} \scY_{j,\nu_e x}^{-1} \prod_{e:j \to i} \scY_{j,\nu_e^{-1}xq_{12}}^{-1} 
        \, .
    \end{align}
\end{proposition}

\rem{$\operatorname{dim} \mathfrak{M}_{w,v}^{\Gamma_d}$}

\begin{proposition}
    Proposition~\ref{prop:qq-ch_alg_geom} holds for the operatorial $qq$-character associated with non-simply-laced quivers and the functorial $qq$-character for the fractional quivers.
\end{proposition}
\fi

%% References %%
%\bibliographystyle{amsalpha_mod}
\bibliographystyle{ytamsalpha}
\bibliography{ref}

\end{document}